\numberwithin{equation}{section}
\def\1{\raisebox{2pt}{\rm{$\chi$}}}
\newtheorem{theorem}{Theorem}[section]
\newtheorem{corollary}[theorem]{Corollary}
\newtheorem{lemma}[theorem]{Lemma}
\newtheorem{definition}[theorem]{Definition}
\newtheorem{remark}[theorem]{Remark}
\newtheorem{lettertheorem}{Theorem}
  \theoremstyle{plain}
  \theoremstyle{plain}
\newcommand{\R}{{\mathbb R}}
\newcommand{\N}{{\mathbb N}}
\newcommand{\F}{{\mathcal F}}
\newcommand{\cQ }{\mathcal Q}
\newcommand{\strt}[1]{\rule{0pt}{#1}} %
\newcommand{\car}[1]{\chi_{\strt{1.5ex}#1}}  %
\def\1{\raisebox{2pt}{\rm{$\chi$}}}
\newcommand{\norm}[1]{\left|\left|#1\right|\right|}
\def\vint_#1{\mathchoice%
        {\mathop{\kern 0.2em\vrule width 0.6em height 0.69678ex depth -0.58065ex
                \kern -0.8em \intop}\nolimits_{\kern -0.4em#1}}%
        {\mathop{\kern 0.1em\vrule width 0.5em height 0.69678ex depth -0.60387ex
                \kern -0.6em \intop}\nolimits_{#1}}%
        {\mathop{\kern 0.1em\vrule width 0.5em height 0.69678ex
            depth -0.60387ex
                \kern -0.6em \intop}\nolimits_{#1}}%
        {\mathop{\kern 0.1em\vrule width 0.5em height 0.69678ex depth -0.60387ex
                \kern -0.6em \intop}\nolimits_{#1}}}
\def\vintslides_#1{\mathchoice%
        {\mathop{\kern 0.1em\vrule width 0.5em height 0.697ex depth -0.581ex
                \kern -0.6em \intop}\nolimits_{\kern -0.4em#1}}%
        {\mathop{\kern 0.1em\vrule width 0.3em height 0.697ex depth -0.604ex
                \kern -0.4em \intop}\nolimits_{#1}}%
        {\mathop{\kern 0.1em\vrule width 0.3em height 0.697ex depth -0.604ex
                \kern -0.4em \intop}\nolimits_{#1}}%
        {\mathop{\kern 0.1em\vrule width 0.3em height 0.697ex depth -0.604ex
                \kern -0.4em \intop}\nolimits_{#1}}}
\newcommand{\aveint}[2]{\mathchoice%
        {\mathop{\kern 0.2em\vrule width 0.6em height 0.69678ex depth -0.58065ex
                \kern -0.8em \intop}\nolimits_{\kern -0.45em#1}^{#2}}%
        {\mathop{\kern 0.1em\vrule width 0.5em height 0.69678ex depth -0.60387ex
                \kern -0.6em \intop}\nolimits_{#1}^{#2}}%
        {\mathop{\kern 0.1em\vrule width 0.5em height 0.69678ex depth -0.60387ex
                \kern -0.6em \intop}\nolimits_{#1}^{#2}}%
        {\mathop{\kern 0.1em\vrule width 0.5em height 0.69678ex depth -0.60387ex
                \kern -0.6em \intop}\nolimits_{#1}^{#2}}}
\newcommand{\loc}{\mathrm{loc}}
\newcommand{\vertii}[1]{{\left\vert\kern-0.25ex\left\vert\kern-0.25ex  #1 
    \kern-0.25ex\right\vert\kern-0.25ex\right\vert}}
\newcommand{\vertiii}[1]{{\left\vert\kern-0.25ex\left\vert\kern-0.25ex\left\vert #1 
    \right\vert\kern-0.25ex\right\vert\kern-0.25ex\right\vert}}
\begin{document}

\title{On the BBM-phenomenon in fractional Poincar\'e--Sobolev inequalities with weights}

\author[R. Hurri-Syrj\"anen]{Ritva Hurri-Syrj\"anen}
\address[Ritva Hurri-Syrj\"anen]{Department of Mathematics and Statistics, Pietari Kalmin katu 5, FI-00014 University of Helsinki, Finland}
 \email{ritva.hurri-syrjanen@helsinki.fi}

\author[J. C. Mart\'inez-Perales]{Javier C. Mart\'inez-Perales}
\address[Javier C. Mart\'inez-Perales]{Calle Nueva, 18, Manilva, M\'alaga, Spain} \email{javicemarpe@gmail.com}

\author[C. P\'erez]{Carlos P\'erez}
\address[Carlos P\'erez]{Department of Mathematics, University of the Basque Country, IKERBASQUE 
(Basque Foundation for Science) and
BCAM \textendash  Basque Center for Applied Mathematics, Bilbao, Spain}
\email{cperez@bcamath.org}

\author[A. V.\! V\"ah\"akangas]{Antti V. V\"ah\"akangas}
\address[Antti V. V\"ah\"akangas]{University of Jyvaskyla, Department of Mathematics and Statistics, P.O. Box 35, FI-40014 University of Jyvaskyla, Finland} 
\email{antti.vahakangas@iki.fi}

\thanks{J. M. is supported by the Basque Government through the BERC 2018-2021 program and by the Spanish State Research Agency through BCAM Severo Ochoa excellence accreditation SEV-2017-2018.}

\thanks{C. P. is  supported by the Basque Government through the IT1247-19 project, the BERC 2018-
2021 program  and by the Spanish State Research Agency through BCAM Severo
Ochoa excellence accreditation SEV-2017-0718 and thru the  project 
PID2020-113156GB-I00/AEI /10.13039/501100011033 funded by Spanish State Research Agency and acronym ``HAPDE".}

\thanks{   The authors acknowledge the support
to this project
from the Academy of Finland project  (314829)
of Tuomas Hyt\"onen.  }

\makeatletter
\@namedef{subjclassname@2020}{{\mdseries 2020} Mathematics Subject Classification}
\makeatother

\keywords{Poincar\'e--Sobolev inequality, Muckenhoupt weight.}

\subjclass[2020]{Primary: 46E35. Secondary: 42B25.}

\begin{abstract}

In this paper we unify and improve some of the results of Bourgain, Brezis and Mironescu
 and the   weighted Poincar\'e--Sobolev  estimate by Fabes, Kenig and Serapioni. 
More precisely, we get weighted counterparts of the Poincar\'e--Sobolev type inequality 
 and also of the Hardy type inequality in the fractional case
 under some mild natural restrictions. 

A main feature of the results we obtain is the fact that we keep track of the behaviour of the constants involved when the fractional parameter approaches to $1$. %
Our main method is based on techniques coming from harmonic analysis 
related to the   self-improving property of generalized Poincar\'e inequalities. 

\end{abstract}
\maketitle

\section{Introduction: background and motivation}

Let $p\geq 1$. The classical $(1,p)$-Poincar\'e inequality states the existence of a dimensional constant $c_n>0$ such that, for any Sobolev function $u\in W^{1,p}_\loc(\mathbb{R}^n)$,
\begin{equation}\label{e.11p}
\vint_Q \lvert u(x)-u_Q\rvert \,d x  \le  c_n\, \ell(Q)\left(\vint_Q |\nabla u(x)|^p\, d x\right)^{1/p}, \qquad Q\in \cQ\,,
\end{equation}
where $\cQ$ is the  of  family cubes (i.e. a cartesian product of $n$ intervals of the same side length $\ell(Q)$  in $\R^n$ and $u_Q:=\vint_Q u(x)\,dx:={\frac{1}{|Q|}}\int_Qu(x) dx$). The  mean oscillations of Sobolev functions $u$ over cubes are then controlled 
by the 
local  Sobolev seminorm 
\[
[u]_{W^{1,p}(Q)}:=\ell(Q)\left(\vint_Q|\nabla u(x)|^p\, dx\right)^{1/p}.
\]
More recently (see for instance \cite{DD, DM1, ACPS1, ACPS2,HL}),  a   fractional  counterpart  of this Poincar\'e inequality  has attracted the attention of many authors. A naive version of it states  the existence of a dimensional positive constant $c_n$ such 
that, given $u\in L^1_{\loc}(\mathbb{R}^n)$,
\begin{equation}\label{FPI-rough}
\vint_Q \lvert u(x)-u_Q\rvert\,dx \leq  c_n \ell(Q)^\delta\left(\vint_Q\int_Q\frac{|u(x)-u(y)|^p}{|x-y|^{n+\delta p}}\, dy\, dx\right)^{1/p}, 
\qquad Q\in \cQ 
\end{equation}
for any $p\geq 1$ and $\delta\in(0,1)$. This inequality allows to control the  mean  {oscillation of a function $u$ over a cube $Q$} 
by %
the fractional Sobolev seminorm 
\begin{equation}\label{fractSobolevseminorm}
[u]_{W^{\delta,p}(Q)}:= \ell(Q)^\delta \left(\vint_Q\int_Q \frac{|u(x)-u(y)|^p}{|x-y|^{n+\delta p}}\, dy\, dx\right)^{1/p}.
\end{equation}

Inequality \eqref{FPI-rough} is an easy-to-get estimate but it turns out that it encodes a lot of information. Indeed, it can be shown, by using the methods first proved in \cite{FPW1} and then  improved in  \cite{CP},  that the left hand side can be replaced by 
the normalized weak or Marcinkiewicz norm (see \eqref{normMarcnorm}). We then get 
\begin{equation} \label{weaktypeSobolev}
\big\|u-u_Q\big\|_{L^{p^*_{\delta},\infty}\big( Q, \frac{dx}{|Q|}\big)} \leq c_n \, p^*_{\delta}\, \, [u]_{W^{\delta,p}(Q)}, \qquad Q\in \cQ,
\end{equation}
 whenever  $\delta\in(0,1)$, $p\in [1,\tfrac{n}{\delta})$,   and where $p_{\delta}^*$ is the fractional Sobolev exponent defined by
\begin{equation} \label{FracSobExp}
\frac{1}{p} -\frac{1}{ p_{\delta}^* }=\frac{\delta}{n}.
\end{equation}
Observe that \,$p<\frac{n}{\delta}$ and $p<p_{\delta}^*$. Inequality \eqref{weaktypeSobolev}   readily follows from the following property of the functional $a: \cQ\to [0,\infty)$, $a(Q)=[u]_{W^{\delta,p}(Q)}$,   
\begin{equation}\label{Dp}
\left( \sum_i a(Q_i)^{p^*_{\delta}} \frac{|Q_i|}{|Q|}  \right)^{\frac1{p^*_{\delta}}} \leq a(Q)    
\end{equation}
which holds for any $Q\in \cQ$ and any family of disjoint dyadic subcubes $\left\lbrace Q_i\right\rbrace \in \mathcal D(Q)$. We will
denote the collection of all dyadic cubes by $\mathcal D$ and by $\mathcal D(Q)$ the collection of all dyadic cubes relative to the cube $Q$.

 Moreover,  since the truncation argument works for the functional $[u]_{W^{\delta,p}(Q)}$ as shown in \cite{DIV}, then we can replace the weak norm in 
\eqref{weaktypeSobolev} by the strong norm  to get  
\begin{equation}\label{FracstrongtypeSobolev}
\inf_{c\in \mathbb{R}}\big\|u-c\big\|_{L^{p^*_{\delta}}\big( Q, \frac{dx}{|Q|}\big)} \leq c_n \, p^*_{\delta}\, \, [u]_{W^{\delta,p}(Q)}. 
\end{equation}

It turns out that \eqref{FracstrongtypeSobolev}  is far from being optimal. 
Indeed, it follows from \cite{BBM3},
\begin{equation} \label{FPI with gain}
\vint_Q |u-u_Q|\,dx\leq c_{n}\,%
(1-\delta)^{\frac1p}\, \,[u]_{W^{\delta,p}(Q)},
\end{equation}
where the highly  interesting extra gain 
 $(1-\delta)^{\frac1p}$ appears in front of $[u]_{W^{\delta,p}(Q)}$. A different and interesting approach was considered later in \cite{Mi} combining ideas from  interpolation theory and extrapolation theory \cite{JM}, \cite{DM2}.  
  We remit to \cite{KMX} for interesting extensions to the context of higher-order Besov norms. See also \cite{DM3} for more about the central role played by interpolation theory and extrapolation theory.     
See also \cite{CMPR} for some related  results within the context of product spaces.   

The estimate \eqref{FPI with gain} will   be the    ``key initial" starting point in most of our proofs.

Note that, according to \cite[Proposition 2]{B},  a measurable non-constant function  $u$ on a cube $Q$ would satisfy, 
\begin{equation*}\label{alternative_seminorm}
\left(\vint_Q\int_Q \frac{|u(x)-u(y)|^p}{|x-y|^{n+p}}\, dy\, dx\right)^{\frac1p}=\infty,
\end{equation*}
and so inequality \eqref{FPI-rough} does not provide any information about the function $u$ when $\delta \to 1$. This is corrected in estimate  \eqref{FPI with gain} or \eqref{e.Bgain} below, where the factor $(1-\delta)^{1/p}$ balances this behaviour when $\delta\to 1$ and  so its presence in the inequality is in fact essential.   

Now, exactly as outlined above, to prove \eqref{FracstrongtypeSobolev}, the results in \cite{CP} combined with the truncation method in this context obtained in \cite{DIV}, yield the following result.

\begin{lettertheorem} \label{FracSobGain} Let $0<\delta<1$, $1\leq p<\tfrac{n}{\delta}$ and let $p^*_{\delta}$ be the fractional Sobolev exponent  \eqref{FracSobExp}. Then, for any locally integrable function $u$,
\begin{equation}\label{e.Bgain}
\left(\vint_Q |u(x)-u_Q|^{p^*_{\delta}}\,dx\right)^{\frac{1}{p^*_{\delta}} }\leq c_n \, p^*_{\delta}\, %
(1-\delta)^{\frac1p}\, [u]_{W^{\delta,p}(Q)}   \qquad Q \in \cQ.
\end{equation}
\end{lettertheorem}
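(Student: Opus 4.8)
The plan is to obtain \eqref{e.Bgain} by combining the ``key initial'' estimate \eqref{FPI with gain} with the self-improvement machinery of \cite{CP} and the truncation method of \cite{DIV}, exactly along the scheme already sketched in the excerpt for the passage from \eqref{FPI-rough} to \eqref{FracstrongtypeSobolev}. The starting point is the functional $a\colon\cQ\to[0,\infty)$ defined by $a(Q)=[u]_{W^{\delta,p}(Q)}$. First I would record that this functional satisfies the discrete summability condition $D_{p^*_\delta}$, namely \eqref{Dp}: for every $Q\in\cQ$ and every family of pairwise disjoint dyadic subcubes $\{Q_i\}\in\mathcal D(Q)$ one has $\big(\sum_i a(Q_i)^{p^*_\delta}|Q_i|/|Q|\big)^{1/p^*_\delta}\le a(Q)$. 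This is where the choice of exponent $p^*_\delta$ from \eqref{FracSobExp} is used; it follows because, after multiplying through by $|Q|$, the left-hand side becomes $\ell(Q)^{-n/p^*_\delta}\big(\sum_i \ell(Q_i)^{\delta p}\ell(Q_i)^{n-np/p^*_\delta}\big(\int_{Q_i}\int_{Q_i}\cdots\big)^{p/p^*_\delta}\cdot\ell(Q_i)^{\dots}\big)$, and one checks the exponents match so that, since the double integrals over the disjoint $Q_i$ add up to at most the double integral over $Q$, superadditivity (here $p/p^*_\delta\le 1$, so the $\ell^{p^*_\delta/p}$ quasi-norm of the pieces is dominated by the sum) closes the estimate. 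I would state this as a short lemma or simply cite it as standard.

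Next I would invoke the self-improving theorem of \cite{CP} (or the earlier \cite{FPW1}): if a functional $a$ satisfies the $D_{p^*_\delta}$ condition together with a starting Poincar\'e-type estimate $\vint_Q|u-u_Q|\,dx\le C\,a(Q)$ for all $Q\in\cQ$, then one gets the endpoint weak-type improvement
\begin{equation*}
\|u-u_Q\|_{L^{p^*_\delta,\infty}(Q,\,dx/|Q|)}\le c_n\,p^*_\delta\,C\,a(Q),\qquad Q\in\cQ.
\end{equation*}
The crucial point is that for the starting estimate I do not use the naive \eqref{FPI-rough} but rather Milman's sharpened inequality \eqref{FPI with gain}, which already carries the constant $C=c_n\,\delta^{1/p}(1-\delta)^{1/p}$. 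Since the self-improvement step is ``constant-faithful'' — it only multiplies the input constant $C$ by $c_n\,p^*_\delta$ and does not reintroduce any dependence on $\delta$ — the double extra gain $\delta^{1/p}(1-\delta)^{1/p}$ survives into the weak-type conclusion. This yields a $\delta$-robust version of \eqref{weaktypeSobolev}.

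Finally I would upgrade the weak $L^{p^*_\delta,\infty}$ bound to the strong $L^{p^*_\delta}$ bound via the truncation argument, which is applicable here because, as shown in \cite{DIV}, the functional $Q\mapsto[u]_{W^{\delta,p}(Q)}$ behaves well under truncations of $u$: replacing $u$ by a truncation $u_{\lambda_1}^{\lambda_2}:=\min\{\max\{u,\lambda_1\},\lambda_2\}$ does not increase the relevant double integral, so each truncated piece inherits \eqref{FPI with gain} with the same constant. Summing the weak-type estimates over a dyadic decomposition of the range of $u$ (the standard Maz'ya-type truncation / good-$\lambda$ bootstrap) converts $L^{p^*_\delta,\infty}$ into $L^{p^*_\delta}$ at the cost of another dimensional and $p^*_\delta$-dependent factor, which can be absorbed into $c_n\,p^*_\delta$; centering at $u_Q$ rather than the infimum only changes constants. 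Collecting the factors gives precisely
\begin{equation*}
\Big(\vint_Q|u(x)-u_Q|^{p^*_\delta}\,dx\Big)^{1/p^*_\delta}\le c_n\,p^*_\delta\,\delta^{1/p}(1-\delta)^{1/p}\,[u]_{W^{\delta,p}(Q)}.
\end{equation*}
The main obstacle, and the only place requiring genuine care, is verifying that neither the self-improvement step nor the truncation step secretly degrades as $\delta\to 1$ or $\delta\to 0$ — i.e.\ that all the intermediate constants really are of the claimed form $c_n\,p^*_\delta$ with no hidden $\delta$-factors — and that the truncation inequality for the fractional seminorm from \cite{DIV} holds with the constant independent of $\delta$; everything else is a bookkeeping assembly of cited results.
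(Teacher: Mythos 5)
Your proposal matches the paper's own argument exactly: the paper derives Theorem~\ref{FracSobGain} precisely by feeding Milman's sharpened starting estimate \eqref{FPI with gain} into the same machinery (the $D_{p^*_\delta}$ condition \eqref{Dp} for the functional $a(Q)=[u]_{W^{\delta,p}(Q)}$, the weak-type self-improvement of \cite{CP}/\cite{FPW1}, and the fractional truncation from \cite{DIV}) that it had already used to pass from \eqref{FPI-rough} to \eqref{FracstrongtypeSobolev}. Your accounting of why the factor $\delta^{1/p}(1-\delta)^{1/p}$ survives these two steps, and the remark that switching between $u_Q$ and the infimum costs only a universal factor, are also the points the paper leaves implicit.
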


One of the main results of this paper is to extend \eqref{e.Bgain} into the context of $A_p$ weights. Unfortunately we can only derive sharp results, preserving the 
$\delta \to 1$, 
in the case of $A_1$ weights.

\begin{remark}\label{globalresult}
Standard arguments can be used to  obtain from  \eqref{e.Bgain}  the global estimate in $\mathbb{R}^{n}$ with the correction factor  %
$(1-\delta)^{\frac1p}$ in front, namely
\[
\left( \int_{\mathbb{R}^n} |u(x)|^{p^*_{\delta}}\,dx  \right)^{\frac{1}{p^*_{\delta}}}
\leq c_n\,p^*_{\delta}\, %
(1-\delta)^{1/p} \,\left(\int_{\mathbb{R}^n}\int_{\mathbb{R}^n}\frac{|u(x)-u(y)|^p}{|x-y|^{n+\delta p}}\, dy\, dx\right)^{1/p},
\]
which holds for, say, any function 
$u\in L^1_{\loc}(\mathbb{R}^{n})$    such that for an increasing family of cubes $\{Q_j\}$ 
with $\R^n=\bigcup_{j=1}^\infty Q_j$  we have  $u_{Q_j}\to 0$ as $j \to \infty$,  see Definition \ref{defmathcalF} in Section \ref{2.1} below. 
Corresponding inequalities are considered for functions defined 
on unbounded John domains in \cite{HV2}
without keeping track of the constants' exact dependence on the parameter $\delta$.
Different and interesting approaches to prove this global result were provided in \cite{MS1}  and \cite{KL}.
\end{remark}

A global version of Theorem  \ref{FracSobGain} was obtained in \cite{MS1} using appropriate global Hardy type estimates  which are also interesting on its own and that we state next.

\begin{lettertheorem}[\text{\cite[Theorem 2]{MS1}}]\label{t.MS.Hardy} Let $1\leq p<\infty$ and $0<\delta<1$ such that $\delta p<n$. There exists $c_{n,p}>0$ such that, for any function in the completion $W_0^{\delta,p}(\mathbb{R}^n)$   
of the space of compactly supported smooth functions under the seminorm $[\ \cdot\ ]_{W^{\delta,p}(\mathbb{R}^n)}$,  there holds
\begin{equation}\label{MS.Hardy}
\left(\int_{\mathbb{R}^n}|u(x)|^p\frac{dx}{|x|^{\delta p}}\right)^{1/p}\leq 
c_{n,p} \, \, \delta^{1/p}(1-\delta)^{1/p} \left(\int_{\mathbb{R}^n}\int_{\mathbb{R}^n}\frac{|u(x)-u(y)|^p}{|x-y|^{n+\delta p}}\, dy\, dx\right)^{1/p}.
\end{equation}
\end{lettertheorem}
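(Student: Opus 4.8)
The plan is to deduce the Hardy inequality \eqref{MS.Hardy} from the Poincar\'e--Sobolev estimate \eqref{e.Bgain} of Theorem~\ref{FracSobGain}, so that the factor $\delta^{1/p}(1-\delta)^{1/p}$ is inherited directly rather than re-derived. The mechanism is a dyadic decomposition of $\R^n$ into annuli centered at the origin together with a chaining argument. First I would fix $u\in W_0^{\delta,p}(\R^n)$ and, by density, assume $u$ is smooth with compact support, so that $u_{Q_j}\to 0$ along an increasing exhausting family of cubes; this lets me use the global form of \eqref{e.Bgain} recorded in Remark~\ref{globalresult}. Then I would cover $\R^n\setminus\{0\}$ by the dyadic annuli $A_k=\{x: 2^{k}\le |x|<2^{k+1}\}$, $k\in\Z$, on each of which $|x|^{-\delta p}\simeq 2^{-k\delta p}$, so that the left side of \eqref{MS.Hardy} is comparable to $\sum_k 2^{-k\delta p}\int_{A_k}|u|^p\,dx$.

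The key step is to estimate $\int_{A_k}|u|^p\,dx$ on each annulus. On a cube $Q_k$ of side $\simeq 2^k$ containing $A_k$ one has, by the triangle inequality, $\|u\|_{L^p(A_k)}\lesssim \|u-u_{Q_k}\|_{L^p(Q_k)} + |Q_k|^{1/p}|u_{Q_k}|$, and the first term is controlled by H\"older's inequality and \eqref{e.Bgain} (with exponent $p^*_\delta>p$) in terms of $|Q_k|^{1/p-1/p^*_\delta}$ times $p^*_\delta\,\delta^{1/p}(1-\delta)^{1/p}[u]_{W^{\delta,p}(Q_k)}$; note $|Q_k|^{1/p-1/p^*_\delta}=|Q_k|^{\delta/n}\simeq 2^{k\delta}$, which is exactly the scaling needed to cancel the weight $2^{-k\delta p}$. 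For the averages $u_{Q_k}$ I would chain: since $u_{Q_j}\to 0$, write $u_{Q_k}$ as a telescoping sum of successive differences $u_{Q_{k+1}}-u_{Q_k}$, each of which is bounded by the oscillation $\vint_{Q_{k+1}}|u-u_{Q_{k+1}}|\,dx$, hence again by \eqref{e.Bgain}. Summing the resulting geometric-type series in $k$ (using that $\delta p<n$ guarantees summability on the relevant side) and then summing the localized seminorms $[u]_{W^{\delta,p}(Q_k)}^p$ — here one uses bounded overlap of the enlarged cubes and the fact that $\sum_k \int_{Q_k}\int_{Q_k}\frac{|u(x)-u(y)|^p}{|x-y|^{n+\delta p}}\,dy\,dx \lesssim \int_{\R^n}\int_{\R^n}\frac{|u(x)-u(y)|^p}{|x-y|^{n+\delta p}}\,dy\,dx$ — yields \eqref{MS.Hardy} with the advertised constant.

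I expect the main obstacle to be controlling the contribution of the averages $u_{Q_k}$ uniformly in $\delta$, i.e.\ making sure the telescoping/chaining argument does \emph{not} destroy the gain factor $\delta^{1/p}(1-\delta)^{1/p}$ when summing over infinitely many scales. A crude bound on the geometric series would produce a constant blowing up like $1/\delta$ or $1/(1-\delta)$ as $\delta\to 0$ or $\delta\to 1$, cancelling the gain; avoiding this requires organizing the sum so that the ratio of the geometric series is bounded away from $1$ uniformly in $\delta$ (for instance by grouping the annuli and exploiting that $|Q_k|^{\delta/n}=2^{k\delta}$ with $0<\delta<1$ gives ratio $2^{\delta}\in(1,2)$ on one side and, after pairing with the weight, a ratio $2^{-\delta(p^*_\delta/p-1)}$ strictly less than $1$ on the other, both with $\delta$-independent bounds). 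A secondary technical point is the reduction to compactly supported smooth $u$ and the justification of $u_{Q_j}\to 0$, which is where the hypothesis $\delta p<n$ and the definition of $W_0^{\delta,p}(\R^n)$ enter; this is routine but should be stated carefully so that the density argument genuinely passes the constant through.
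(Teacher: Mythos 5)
Your approach is genuinely different from the paper's. The paper does not re-run an annular decomposition; it proves a general $A_\infty$-weighted fractional Hardy inequality (Theorem~\ref{t.hardy-MS}) by self-improving methods, with no geometric factor $\ell(Q)$ on the right, and then recovers \eqref{MS.Hardy} as the special case of Corollaries~\ref{ThmSpecialcase} and~\ref{ThmSpecialcaseg} by observing that $|x|^{-\delta p}\simeq M_{n-\delta p}(\delta_0)(x)$ is an $A_1$ weight whose $A_1$ constant is bounded independently of $\delta$ (Lemma~\ref{PropertiesFractMax.}). The gain factor $\delta^{1/p}(1-\delta)^{1/p}$ is inherited from \eqref{FPI with gain} inside the self-improving machinery, not from a scale-by-scale summation.

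There is, however, a concrete gap in your argument. A cube $Q_k$ containing the full annulus $A_k=\{2^k\le |x|<2^{k+1}\}$ must have side length at least $2^{k+2}$ and be essentially centered at the origin; these cubes are therefore nested, not boundedly overlapping. In $\R^n\times\R^n$ the products $Q_k\times Q_k$ have unbounded overlap: every pair $(x,y)$ with $|x|,|y|\le 1$ lies in $Q_k\times Q_k$ for all $k\ge 0$. Consequently the inequality
\[
\sum_k \int_{Q_k}\int_{Q_k}\frac{|u(x)-u(y)|^p}{|x-y|^{n+\delta p}}\,dy\,dx \ \lesssim\ \int_{\R^n}\int_{\R^n}\frac{|u(x)-u(y)|^p}{|x-y|^{n+\delta p}}\,dy\,dx
\]
that you invoke via ``bounded overlap of the enlarged cubes'' is false, and after the weights $2^{-k\delta p}\cdot 2^{k\delta p}$ cancel there is no further decay in $k$ to rescue the sum. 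To make an annular decomposition work one must localize the Poincar\'e--Sobolev estimate to the annuli themselves (which \emph{do} have bounded overlap across scales), but an annulus is not a cube, so \eqref{e.Bgain} cannot be applied directly; one would need a John-domain version of \eqref{e.Bgain} or a genuine chaining argument among boundedly overlapping cubes of side $\simeq 2^k$ sitting inside the annulus, including control of the averages over different such cubes within the same annulus — none of which is supplied. Your secondary worry about the telescoping of the averages $u_{Q_k}$ destroying the gain is also real in dimension $n=1$ (where the geometric ratio involves $2^{-(n-\delta)}=2^{-(1-\delta)}\to 1$ as $\delta\to 1$), though for $n\ge 2$ it is harmless; the paper sidesteps all of this because Theorem~\ref{t.hardy-MS} globalizes directly by Fatou (Corollary~\ref{globalversion1}), with no chaining over scales at all.
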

We improve this inequality in this work. First we will replace   the class of power weights 
$\frac{1}{|x|^{\delta p}}$ at the left hand side of \eqref{MS.Hardy} by   $A_{\infty}$ weights and then with no assumption on the weight  with a worse constant in front. Our method is different and more general.

 In the following section we state and discuss the main results of this paper. We distinguish between the weighted variants of \eqref{e.Bgain} and \eqref{MS.Hardy}, which we will call fractional  Poincar\'e--Sobolev and Hardy type inequalities, respectively.
 
 \

{\bf Acknowledgements} 

Part of the research leading to the results in this article took place at the  Department of Mathematics of Aalto University while the third author was a visiting faculty. He is very grateful to the Prof. Juha Kinnunen for his invitation,  his hospitality and for the many  discussions we had. He is  also very grateful to his PhD students Julian Weigt and Kim Myyrylainen for their interest and for many interesting discussions. In particular, we are very grateful to Julian Weigt for pointing out to us an incorrect  stronger variant of estimate \eqref{FPI with gain} which appeared  
in the first version of this paper with the extra factor $\delta^{\frac{1}{p}}$ in the right-hand side for any $\delta\in (0,1)$.

\section{ Statement of the main results }  \label{subsec.weighted.PS}

\subsection{The case of $A_{\infty}$ type weights} \label{2.1} 

Our first main result is an extension of Theorem \ref{FracSobGain} to include weights from the $A_1$ class.  
  The method of proof  in \cite{BBM3} cannot be used at all due to the presence of the weight. We use ideas from \cite{PR} instead,  where a  general ``self-improving" argument is introduced, thus avoiding  completely the use of any representation formula.
Actually, we will state two type of Poincar\'e--Sobolev inequalities. The first one involves a {\it weighted fractional Sobolev exponent}, $p_{\delta,w}^*$ and the second  one involves the   usual    fractional Sobolev exponent $p_\delta^*$ \eqref{FracSobExp}.

\begin{theorem} \label{selfIMproveGoodConstant}  
Let $0<\delta<1$ and $1\leq p< \tfrac{n}{\delta}$. 
 Let $w \in A_1 $,  and let 
\,$p_{\delta,w}^*$ be the weighted fractional Sobolev exponent defined by
\begin{equation}\label{DegSobExp}
\frac{1}{p} -\frac{1}{ p_{\delta,w}^* }=   \frac{\delta}{n} \, \frac{1}{1+\log [w]_{A_1}}.
\end{equation}
Then there is a dimensional constant $c_n$ such that, for every cube $Q$ in $\mathbb{R}^n$  and for any $u\in L^1_\loc(\mathbb{R}^n)$, 
\begin{equation}\label{A1PSFractBBM}
\begin{split}
\inf_{c\in \mathbb{R}} & \left( \frac{1}{w(Q)}  \int_{ Q }   |u -c|^{p_{\delta,w}^*}     \,wdx\right)^{\frac{1}{p_{\delta,w}^*}}  
\\&\qquad\leq c_n\,  p_\delta^*\,  %
(1-\delta)^{\frac1p} \,  [w]^{1+\frac{1}{p}}_{A_1}\,  \ell(Q)^{\delta} \,
\left(   \frac{1}{w(Q)} \int_{Q}  \int_{Q}  \frac{|u(x)- u(y)|^p}{|x-y|^{n+p\delta}}\,dy\,wdx\right)^{\frac1p}.  
\end{split}
\end{equation}
\end{theorem}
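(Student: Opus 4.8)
The plan is to run a self-improving argument in the spirit of \cite{PR}, starting from the weighted version of the ``key initial'' estimate \eqref{FPI with gain}. The first step is to establish, for every cube $Q$ and every $u\in L^1_\loc(\mathbb{R}^n)$, a weighted $(1,1)$-type Poincar\'e estimate of the form
\begin{equation*}
\frac{1}{w(Q)}\int_Q |u-u_{Q,w}|\,w\,dx \leq c_n\,[w]_{A_1}\,\delta^{\frac1p}(1-\delta)^{\frac1p}\,\ell(Q)^{\delta}\left(\frac{1}{w(Q)}\int_Q\int_Q\frac{|u(x)-u(y)|^p}{|x-y|^{n+p\delta}}\,dy\,w\,dx\right)^{\frac1p},
\end{equation*}
where $u_{Q,w}$ is the weighted average. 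This should follow by combining \eqref{FPI with gain} on $Q$ with the $A_1$ condition to pass from Lebesgue to weighted averages: bounding $w$ below by $\essinf_Q w$ on the left and above by $[w]_{A_1}\essinf_Q w$ (up to the average) on the right, the unweighted inequality \eqref{FPI with gain} applied with $c=u_Q$ gives the claim after absorbing the $A_1$ constant. The point is that this produces a \emph{fractional} functional $a(Q) = [w]_{A_1}\,\delta^{1/p}(1-\delta)^{1/p}\,\ell(Q)^\delta\big(\tfrac{1}{w(Q)}\int_Q\int_Q \tfrac{|u(x)-u(y)|^p}{|x-y|^{n+p\delta}}\,dy\,w\,dx\big)^{1/p}$ whose mean oscillation it controls.

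The second step is to verify that this functional $a$ satisfies the relevant summability (``$D_q$-type'') condition with respect to the measure $w\,dx$, for the exponent $q=p_{\delta,w}^*$. Concretely one needs, for any cube $Q$ and any family of pairwise disjoint dyadic subcubes $\{Q_i\}\in\mathcal D(Q)$,
\begin{equation*}
\Big(\sum_i a(Q_i)^{p_{\delta,w}^*}\,\frac{w(Q_i)}{w(Q)}\Big)^{1/p_{\delta,w}^*}\leq C\,a(Q).
\end{equation*}
Writing $a(Q_i)^{p_{\delta,w}^*} = a(Q_i)^p\cdot a(Q_i)^{p_{\delta,w}^*-p}$, one estimates the first factor by superadditivity of the (localized) double integral and bounds the tail factor $a(Q_i)^{p_{\delta,w}^*-p}$ using $\ell(Q_i)^{\delta(p_{\delta,w}^*-p)}$ together with the reverse-doubling / $A_\infty$ estimate $w(Q_i)/w(Q)\lesssim (\ell(Q_i)/\ell(Q))^{\varepsilon}$ for a suitable $\varepsilon$ depending on $[w]_{A_1}$. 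The precise choice \eqref{DegSobExp} of $p_{\delta,w}^*$, with the factor $1/(1+\log[w]_{A_1})$, is exactly what makes the exponent bookkeeping close: the $A_1$-quantitative reverse-doubling exponent degrades like $1/\log[w]_{A_1}$, and matching it against $\delta(p_{\delta,w}^*-p)/n$ forces this definition. This step, and keeping the dependence on $\delta$ and $[w]_{A_1}$ \emph{linear/polynomial} and correct as $\delta\to 0,1$, is where the main technical work lies.

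The third step is to invoke the abstract self-improving machinery of \cite{CP, FPW1} (as already used to pass from \eqref{FPI-rough} to \eqref{weaktypeSobolev}), now with underlying measure $w\,dx$: the combination of the weighted Poincar\'e estimate from Step 1 with the $D_{p_{\delta,w}^*}$-condition from Step 2 upgrades the left-hand side to the weak norm $\|u-u_{Q,w}\|_{L^{p_{\delta,w}^*,\infty}(Q,\,w\,dx/w(Q))}$, with constant $\lesssim p_{\delta,w}^*$ times the right-hand side of \eqref{A1PSFractBBM}; since $p_{\delta,w}^*\le p_\delta^*$ this is consistent with the stated bound. Finally I would apply the truncation method of \cite{DIV}, which is valid for the fractional functional $[u]_{W^{\delta,p}}$ and transfers to its $A_1$-weighted localization, to replace the weak norm by the strong $L^{p_{\delta,w}^*}(w\,dx)$ norm and to replace $u_{Q,w}$ by the infimum over constants $c\in\mathbb{R}$, yielding \eqref{A1PSFractBBM}. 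The main obstacle I anticipate is Step 2: controlling the interaction between the $A_1$ self-improvement exponent and the fractional scaling $\ell(Q)^\delta$ while retaining the sharp $\delta^{1/p}(1-\delta)^{1/p}$ factor and a clean power $[w]_{A_1}^{1+1/p}$, rather than an uncontrolled or exponential dependence on $[w]_{A_1}$.
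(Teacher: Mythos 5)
The high-level strategy you describe --- feed an $A_1$-weighted fractional functional $a_u(Q)$ into a self-improving theorem of the \cite{PR}/\cite{CP}/\cite{FPW1} type and then truncate --- is indeed how the paper proceeds, and your functional $a$ is the right one. But the details have genuine gaps. Your Step 1 proposes to derive a \emph{weighted} $L^1$ oscillation bound $\tfrac{1}{w(Q)}\int_Q |u-u_{Q,w}|\,w\,dx \le a(Q)$ from \eqref{FPI with gain} by "bounding $w$ below by $\essinf_Q w$ on the left." This cannot work: the $A_1$ condition yields only a pointwise \emph{lower} bound $w(x)\ge \tfrac{1}{[w]_{A_1}}\vint_Q w$ on $Q$, and no pointwise upper bound, so inserting $w$ under the left-hand integral of \eqref{FPI with gain} decreases it --- the inequality you need goes the other way. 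The paper avoids this entirely by keeping the left side \emph{unweighted}: the input to the self-improving theorems is \eqref{InitHyp}, namely $\tfrac{1}{|Q|}\int_Q|u-u_Q|\,dx \le a_u(Q)$, which is precisely what Theorems \ref{thm:Automejoraweak} and \ref{thm:PR-strong} take as hypothesis; the weight $w$ appears only in the $D_q(w)$/$SD^s_q(w)$ condition and in the output norm.

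Your Step 2 mechanism is also misidentified. Lemma \ref{keyCond1} does not invoke a reverse-doubling bound $w(Q_i)/w(Q)\lesssim(\ell(Q_i)/\ell(Q))^\varepsilon$; it uses the $A_1$ inequality $\tfrac{|Q_i|}{w(Q_i)}\le [w]_{A_1}\tfrac{|Q|}{w(Q)}$ directly, after the split $|Q_i|=|Q_i|^{1/M'}\,|Q_i|^{1/M}$ with a free parameter $M$; the $|Q_i|^{1/M'}$ part produces the smallness factor $\bigl(\tfrac{|\bigcup Q_i|}{|Q|}\bigr)^{\delta/(nM')}$, i.e.\ the $SD^s_{p^*_M}(w)$ condition of Definition \ref{def:smallness}, rather than the plain $D_q(w)$ condition you state. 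Finally, what your Step 3 glosses over is the dichotomy that the proof actually needs: when $[w]_{A_1}>e^{1/\delta}$ one uses the strong-type Theorem \ref{thm:PR-strong} with $M=1+\log[w]_{A_1}$ (so $p^*_M=p^*_{\delta,w}$) and obtains a constant $\lesssim 1+\log[w]_{A_1}\le [w]_{A_1}$; when $[w]_{A_1}\le e^{1/\delta}$ the smallness route degenerates (the factor $\tfrac{1+\log[w]_{A_1}}{\delta\log[w]_{A_1}}$ blows up as $[w]_{A_1}\to 1$), and the paper instead applies the weak-type Theorem \ref{thm:Automejoraweak} with the $D_{p^*_1}(w)$ condition from Lemma \ref{keyCond1} part 2), exploits $[w]_{A_1}^{\delta/n}\le e^{1/n}$, passes to $L^{p^*_{\delta,w},\infty}$ by the normalized weak-space embedding, and then truncates. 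Without this case split the claimed polynomial dependence $[w]_{A_1}^{1+1/p}$ with the intact factor $\delta^{1/p}(1-\delta)^{1/p}$ is not obtained.
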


\begin{remark} %
We emphasize that in spite of the singularity introduced by the weight $w$, the smallness factor 
$(1-\delta)^{\frac1p}$ is kept as if there were no such singularity.
\end{remark}

This theorem and next one can be seen as improvement of the celebrated Poincar\'e--Sobolev inequalities obtained by Fabes, Kenig and Serapioni \cite{FKS} (see also \cite{HKM})  by combining this result and the ``reverse type'' results obtained in \cite{HMPV}, at least in the case $w\in A_1$. 

Observe that  \,$p<p_{\delta,w}^* \leq p_{\delta}^*$ where $p^*_{\delta}$ is the fractional  Sobolev exponent defined above and note that $p_{\delta,w}^*$ is of the form \[ p_{\delta,w}^*:=\frac{pn(1+\log[w]_{A_1})}{n(1+\log[w]_{A_1})-\delta p},\]
and so, by comparing with the expression of   $p_\delta^*=\frac{np}{n-p\delta}$ , the term $1+\log[w]_{A_1}$ may be regarded as a distortion of the dimension $n$ introduced by the presence of the weight $w$. Nevertheless, the largest possible borderline exponent $p_\delta^*$ can also be attained with the presence of an $A_1$ weight.  
The cost of this better  improvement  in the scale of the $L^p$ spaces is the loss of some extra $A_1$ constant in front.

\begin{theorem}\label{selfIMproveBadConstant} 
 Let $0<\delta<1$ and $1\leq p< \tfrac{n}{\delta}$.  Let $w \in A_1 $,  and let
$p^*_{\delta}$ be the fractional Sobolev exponent \eqref{weaktypeSobolev}.
Then,  there exists a constant $c_{n}>0$ such that for every $Q\in \cQ$  and for any $u\in L^1_\loc(\mathbb{R}^n)$,
\begin{align*}
&\inf_{c\in \mathbb{R}}\left (\frac{1}{w(Q)} \int_Q |u-c|^{p_{\delta}^*} w dx\right )^{\frac{1}{p_{\delta}^*}} 
\\&\qquad \leq 
c_n\,p_{\delta}^*\,\, %
(1-\delta)^{\frac1p} \,
 [w]_{A_1}^{   \frac{\delta}{n} +1+\frac1p}  
 \ell(Q)^{\delta} \, 
\left(\frac{1}{w(Q)} \int_{Q}  \int_{Q}  \frac{|u(x)- u(y)|^p}{|x-y|^{n+p\delta}}\,dy\,wdx\right)^{\frac1p}.
\end{align*}

\end{theorem}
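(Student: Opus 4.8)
\textbf{Proof plan for Theorem \ref{selfIMproveBadConstant}.}
The plan is to deduce the ``borderline exponent'' version from Theorem \ref{selfIMproveGoodConstant} by an elementary H\"older interpolation, paying for the improvement with the extra power of $[w]_{A_1}$ advertised in the statement. First I would fix $w\in A_1$, $0<\delta<1$, $1\le p<n/\delta$, a cube $Q$ and $u\in L^1_\loc(\mathbb{R}^n)$. Theorem \ref{selfIMproveGoodConstant} already gives control of the $L^{p_{\delta,w}^*}$-average of $u-c$ (for an optimizing constant $c$) by the weighted fractional seminorm, with the good factor $\delta^{1/p}(1-\delta)^{1/p}[w]_{A_1}^{1+1/p}$ in front. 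The target, however, is the $L^{p_\delta^*}$-average, and since $p_{\delta,w}^*\le p_\delta^*$ this is a \emph{larger} norm, so a naive H\"older inequality on the probability space $(Q,\frac{wdx}{w(Q)})$ goes the wrong way. The point is that the gap between the two exponents is quantitatively small: from \eqref{FracSobExp} and \eqref{DegSobExp},
\[
\frac{1}{p_{\delta,w}^*}-\frac{1}{p_\delta^*}=\frac{\delta}{n}\Big(1-\frac{1}{1+\log[w]_{A_1}}\Big)=\frac{\delta}{n}\cdot\frac{\log[w]_{A_1}}{1+\log[w]_{A_1}},
\]
which is at most $\delta/n$, so $p_{\delta,w}^*$ and $p_\delta^*$ differ only by a factor controlled by $[w]_{A_1}$.

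The mechanism to cross this gap is a self-improving / truncation-type argument: one does not simply interpolate a single estimate, but applies Theorem \ref{selfIMproveGoodConstant} on all dyadic subcubes of $Q$ together with the $A_\infty$ (here $A_1$) property of $w$, exactly as in the passage from \eqref{weaktypeSobolev} to \eqref{FracstrongtypeSobolev} but now in the weighted setting; equivalently, one runs the functional machinery of \cite{CP,PR,DIV} with the $L^{p_{\delta,w}^*}$ conclusion of Theorem \ref{selfIMproveGoodConstant} as the input estimate \eqref{Dp} and raises the integrability exponent to $p_\delta^*$. Concretely I would verify that the functional $a(Q')=\ell(Q')^\delta\big(\frac{1}{w(Q')}\int_{Q'}\int_{Q'}\frac{|u(x)-u(y)|^p}{|x-y|^{n+p\delta}}\,dy\,wdx\big)^{1/p}$, together with $\sigma=w$, satisfies the weighted $SD_{p_{\delta,w}^*}$ condition relative to $w$ (with constant $\lesssim p_\delta^*\delta^{1/p}(1-\delta)^{1/p}[w]_{A_1}^{1+1/p}$), invoke the weighted self-improving theorem to upgrade the exponent from $p_{\delta,w}^*$ to $p_\delta^*$, and track how the constant deteriorates. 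Each such exponent upgrade in a weighted self-improving scheme costs a factor that is a power of the $A_1$ (or $A_\infty$) characteristic of $w$ depending on the size of the exponent jump $1/p_{\delta,w}^*-1/p_\delta^*\le\delta/n$; matching powers gives precisely the stated exponent $\frac{\delta}{n}+1+\frac1p$ on $[w]_{A_1}$, with the smallness factor $\delta^{1/p}(1-\delta)^{1/p}$ untouched since it only enters through the fixed input estimate.

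The main obstacle, and the step requiring genuine care, is the bookkeeping of the $A_1$ constant through the self-improvement: one must ensure that widening the target exponent from $p_{\delta,w}^*$ up to the $w$-independent $p_\delta^*$ introduces no hidden dependence that spoils either the $\delta^{1/p}(1-\delta)^{1/p}$ factor or the polynomial (rather than, say, exponential) dependence on $[w]_{A_1}$. This amounts to checking that in the relevant self-improving lemma the output constant depends on the gap between source and target reciprocal exponents multiplicatively through $[w]_{A_1}^{(\text{gap})\cdot(\text{something dimensional})}$ plus the input constant, so that substituting $\text{gap}=\frac{\delta}{n}\cdot\frac{\log[w]_{A_1}}{1+\log[w]_{A_1}}\le\frac{\delta}{n}$ yields the extra factor $[w]_{A_1}^{\delta/n}$, which combined with the $[w]_{A_1}^{1+1/p}$ already present in Theorem \ref{selfIMproveGoodConstant} produces $[w]_{A_1}^{\delta/n+1+1/p}$. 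Everything else — the choice of the optimizing constant $c$, the reduction to dyadic cubes, and the final passage from weak to strong norm via the truncation method of \cite{DIV} — is routine given the tools already assembled in the earlier part of the paper.
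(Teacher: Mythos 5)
Your plan is not the paper's route, and more importantly it misidentifies the mechanism by which the exponent $p_\delta^*$ is reached, in a way that would not carry through as stated.

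The self-improving theorems in the paper (Theorems \ref{thm:Automejoraweak} and \ref{thm:PR-strong}) take as input a $D_q(w)$ or $SD_q^s(w)$ condition on the functional at a \emph{fixed} exponent $q$ and output a weak or strong $L^q$ estimate at that \emph{same} exponent $q$; there is no built-in "exponent upgrade." You propose to verify the $SD_{p_{\delta,w}^*}$ condition (which, via Theorem \ref{thm:PR-strong}, produces an $L^{p_{\delta,w}^*}$ estimate, i.e.\ Theorem \ref{selfIMproveGoodConstant} again) and then somehow "raise the integrability exponent to $p_\delta^*$," but the machinery offers no such step, and you do not explain how it would go. Also note that the constant you ascribe to the $SD_{p_{\delta,w}^*}$ condition, $\lesssim p_\delta^*\delta^{1/p}(1-\delta)^{1/p}[w]_{A_1}^{1+1/p}$, is the \emph{conclusion} constant of Theorem \ref{selfIMproveGoodConstant}, not a constant for any $D$- or $SD$-type condition on $a$; the actual $SD$-constant in Lemma \ref{keyCond1}(1) is $[w]_{A_1}^{\delta/(nM)}$, which is uniformly bounded for the choice $M=1+\log[w]_{A_1}$. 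Finally, your "each exponent upgrade costs $[w]_{A_1}^{(\text{gap})\cdot(\text{dimensional})}$" accounting is a heuristic that does not correspond to any stated lemma in the paper.

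What the paper actually does for Theorem \ref{selfIMproveBadConstant} is simpler and bypasses Theorem \ref{selfIMproveGoodConstant} entirely: it takes the \emph{same} functional $a_u(Q)=\lambda\ell(Q)^\delta\bigl(w(Q)^{-1}\iint_Q g\,dy\,w\,dx\bigr)^{1/p}$ with $\lambda=c_n\delta^{1/p}(1-\delta)^{1/p}[w]_{A_1}^{1/p}$, verifies directly (Lemma \ref{keyCond1}, part 2, the $M=1$ case) that $a_u$ satisfies the plain $D_{p_\delta^*}(w)$ condition \emph{without} smallness and with $\|a_u\|_{D_{p_\delta^*}(w)}\le[w]_{A_1}^{\delta/n}$, applies the weak-type self-improving Theorem \ref{thm:Automejoraweak} to get an $L^{p_\delta^*,\infty}$ bound with an extra $[w]_{A_\infty}\le[w]_{A_1}$ factor, and then upgrades weak to strong by the truncation Theorem \ref{t.truncation}. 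The exponent $\tfrac{\delta}{n}+1+\tfrac1p$ on $[w]_{A_1}$ then has a transparent threefold origin: $\tfrac1p$ from the $A_1$ passage into the initial Poincar\'e bound (the $\lambda$ in $a_u$), $\tfrac{\delta}{n}$ from the $D_{p_\delta^*}(w)$ constant, and $1$ from $[w]_{A_\infty}$ in Theorem \ref{thm:Automejoraweak}. The reason the $A_1$-dependence is worse here than in Theorem \ref{selfIMproveGoodConstant} is precisely that the $D_{p_\delta^*}$ condition carries no smallness factor, so one is forced through the weak-type/truncation route (which costs the full $[w]_{A_\infty}$) rather than the strong-type route of Theorem \ref{thm:PR-strong}.

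If you want to salvage your write-up, drop the detour through Theorem \ref{selfIMproveGoodConstant}, replace the purported $SD_{p_{\delta,w}^*}$ verification with the genuine $D_{p_\delta^*}(w)$ verification at $M=1$ in Lemma \ref{keyCond1}, and run the weak-type plus truncation argument from there; the constant bookkeeping then falls out without any hand-waving about exponent gaps.
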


These two theorems hold for any $A_p$ weight without the 
gain  %
$(1-\delta)^{\frac1p}$. It may well be the case that both results with $A_p$ weights are true with the %
gain.

Another  satisfactory result is the following fractional Hardy type inequality which follows, as mentioned before, from a general self-improving argument which avoids completely the use of any representation formula.

\begin{theorem} \label{t.hardy-MS}     Let $0<\delta< 1$ and let  $1\leq p<\infty$.   
There exists a dimensional constant $c_n>0$  such that,  for any $w\in A_\infty$  and for any $u\in L^1_\loc(\mathbb{R}^n)$, 
\begin{equation}\label{BBMHardyIneq-Fractp>1}
 \inf_{c\in\R}\left(\int_{Q} \lvert u(x)-c\rvert^p\,w(x)dx\right)^{\frac1p}
\leq c_{n}\,p\,   %
(1-\delta)^{\frac1p}\, [w]_{A_\infty}\,
\left(\int_{Q} \int_{Q} \frac{\lvert u(x)-u(y)\rvert^p}{\lvert x-y\rvert^{n+\delta p}}\,dy\, M_{\delta p,  {Q}}(w)(x)dx\right)^{\frac{1}{p}}
\end{equation}
for every cube $Q$ in $\mathbb{R}^n$. 
\end{theorem}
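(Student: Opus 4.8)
The plan is to derive \eqref{BBMHardyIneq-Fractp>1} from the unweighted estimate \eqref{FPI with gain} by means of the weighted self-improving method underlying Theorems \ref{selfIMproveGoodConstant} and \ref{selfIMproveBadConstant}. Fix a cube $Q$ and $u\in L^1_\loc(\mathbb{R}^n)$, let $c_n$ denote the constant from \eqref{FPI with gain}, and set
\[
h(x):=c_n^{\,p}\,\delta(1-\delta)\int_{Q}\frac{|u(x)-u(y)|^p}{|x-y|^{n+\delta p}}\,dy,\qquad x\in Q.
\]
Because in the seminorm $[u]_{W^{\delta,p}(Q')}$ the inner integration may always be enlarged from $Q'$ to $Q$, applying \eqref{FPI with gain} on each dyadic cube $Q'\in\mathcal{D}(Q)$ yields the $(1,1)$-Poincar\'e inequality
\[
\vint_{Q'}|u-u_{Q'}|\,dx\le b(Q'):=\Big(\ell(Q')^{\delta p}\,\vint_{Q'}h\,dx\Big)^{1/p},\qquad Q'\in\mathcal{D}(Q).
\]
Hence it suffices to upgrade this $(1,1)$-inequality, whose right-hand side functional $b$ is a \emph{fractional $L^p$-average}, to the weighted bound
\[
\Big(\int_Q|u-u_Q|^p\,w\,dx\Big)^{1/p}\le c_n\,p\,[w]_{A_\infty}\Big(\int_Q h(x)\,M_{\delta p,Q}(w)(x)\,dx\Big)^{1/p};
\]
since $\int_Q h\,M_{\delta p,Q}(w)=c_n^{\,p}\delta(1-\delta)\int_Q\big(\int_Q|x-y|^{-n-\delta p}|u(x)-u(y)|^p\,dy\big)\,M_{\delta p,Q}(w)(x)\,dx$, this is exactly \eqref{BBMHardyIneq-Fractp>1}, the infimum over $c$ being handled by $c=u_Q$.

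For the upgrade I would invoke the weighted self-improving machinery. A functional $b$ of this fractional-average form satisfies the $D_{p^*_{\delta}}$-summability condition \eqref{Dp} — by the same telescoping computation as for \eqref{Dp}, here in fact cleaner because the inner integration is fixed — and this is precisely the hypothesis under which the self-improving argument of \cite{CP,PR} applies. Running it, namely a Calder\'on--Zygmund stopping-time/sparse decomposition exactly as in the proofs of Theorems \ref{selfIMproveGoodConstant}--\ref{selfIMproveBadConstant} (so that $|u(x)-u_Q|\lesssim\sum_{S\in\mathcal{S}}b(S)\chi_S(x)$ a.e.\ for a sparse family $\mathcal{S}\subseteq\mathcal{D}(Q)$), against an $A_\infty$ weight $w$ and then performing the characteristic \emph{fractional} reorganization in which the side-length factors $\ell(Q')^{\delta p}$ carried by $b$ are transferred onto a fractional maximal function acting on the weight, should produce the displayed weighted bound. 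In this reorganization the linear dependence on $[w]_{A_\infty}$ enters through a single application of the sharp quantitative reverse H\"older inequality for $A_\infty$ weights, the factor $p$ out front through the final $L^p$ summation, and the smallness factor $\delta^{1/p}(1-\delta)^{1/p}$ rides untouched inside $h$.

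The main difficulty will be this upgrading step, and within it two delicate points. First, obtaining the fractional maximal function $M_{\delta p,Q}$ acting on the \emph{weight} $w$ — rather than merely $w$ itself, or $M_{\delta p,Q}$ acting on the functional density $h$ — is the genuinely fractional feature: one must keep the powers $\ell(Q')^{\delta p}$ explicit throughout the stopping-time decomposition and then carry out a nested-chain/Carleson reorganization of the type $\sum_{S\ni x}\ell(S)^{\delta p}\vint_S w\lesssim[w]_{A_\infty}\,M_{\delta p,Q}(w)(x)$, which is exactly where the $A_\infty$ self-improvement of $w$ is used. Second, the constants must be tracked so that the power of $[w]_{A_\infty}$ is exactly one (reverse H\"older applied a single time, with exponent $1+c_n/[w]_{A_\infty}$) and so that the sparse and Carleson estimates spend no extra powers of $p$, $\delta$ or $1-\delta$, the factor $\delta^{1/p}(1-\delta)^{1/p}$ arising solely from \eqref{FPI with gain}. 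One should also note that, as there is no gain of integrability exponent, no restriction $\delta p<n$ is needed: $M_{\delta p,Q}(w)$ is finite a.e., being dominated by $\ell(Q)^{\delta p}M(w\chi_Q)$.
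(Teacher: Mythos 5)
Your proposal takes a genuinely different—and considerably more involved—route than the paper, and the key step you defer is both the heart of the matter and the part that does not obviously work.

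The paper's proof is a one-line trick followed by a black-box application of Theorem~\ref{thm:Automejoraweak}. Starting from the unweighted key estimate \eqref{FPI with gain}, one multiplies and divides the inner integrand by $M_{\delta p,Q}(w)(x)$ and then uses the trivial pointwise lower bound $M_{\delta p,Q}(w)(x)\ge \ell(Q)^{\delta p}\,w(Q)/|Q|$ (take $R=Q$ in the sup) valid for all $x\in Q$. This single inequality absorbs \emph{both} the $\ell(Q)^\delta$ factor and the normalization $1/|Q|$, converting the right-hand side of \eqref{FPI with gain} into
\[
a_{u,w}(Q):=c_n\,\delta^{1/p}(1-\delta)^{1/p}\left(\frac{1}{w(Q)}\int_Q\int_Q\frac{|u(x)-u(y)|^p}{|x-y|^{n+\delta p}}\,dy\,M_{\delta p,Q}(w)(x)\,dx\right)^{1/p}.
\]
This functional satisfies the \emph{weighted} condition $D_p(w)$ with constant $\le 1$ (monotonicity of $M_{\delta p,Q_i}$ in $Q_i$ plus disjointness), so Theorem~\ref{thm:Automejoraweak}—applied at level $p$, not $p^*_\delta$—gives the weighted weak-$L^p$ estimate with the single factor $[w]_{A_\infty}$, and Theorem~\ref{t.truncation} converts it to the strong norm. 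There is no sparse decomposition, no Carleson-type reorganization, and no reverse H\"older at this stage.

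Your plan instead keeps the unweighted fractional functional $b(Q')=\big(\ell(Q')^{\delta p}\vint_{Q'}h\big)^{1/p}$, invokes the \emph{unweighted} condition $b\in D_{p^*_\delta}$, extracts a pointwise sparse domination $|u-u_Q|\lesssim\sum_S b(S)\chi_S$, and then hopes to push the $\ell(S)^{\delta p}$ factors onto the weight through a Carleson-type estimate of the form $\sum_{S\ni x}\ell(S)^{\delta p}\vint_S w\lesssim[w]_{A_\infty}\,M_{\delta p,Q}(w)(x)$. This is where the gap lies. First, there is a mismatch of hypotheses: Theorem~\ref{thm:Automejoraweak} requires the weighted condition $D_p(w)$, not the Lebesgue condition $D_{p^*_\delta}$, and the target estimate is at level $p$, not $p^*_\delta$. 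Second, and more seriously, the pointwise Carleson sum you invoke is not justified and appears to be false in general: each term is at most $M_{\delta p,Q}(w)(x)$, but a sparse family may contain long nested chains of cubes through $x$ along which $\ell(S)^{\delta p}\vint_S w$ is comparable to $M_{\delta p,Q}(w)(x)$, so the sum is not bounded by that maximal value without additional structure; whatever bound one could salvage would not obviously give linear dependence on $[w]_{A_\infty}$ either. Finally, even if such a reorganization could be carried out, going through $\big(\sum_S b(S)\chi_S\big)^p$ against a general $A_\infty$ weight requires handling the $p$-th power of the sparse sum, which is not a step you address. The decisive missing idea in your proposal is precisely the paper's trick of absorbing $M_{\delta p,Q}(w)$ \emph{before} running any self-improvement, so that the resulting functional is already weighted and trivially in $D_p(w)$, making the whole reorganization unnecessary.
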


Observe that this result is of different nature since the fractional part has been absorbed by the fractional maximal function $M_{\delta p,  {Q}}$ (see Definition \ref{fracmax}). 
This is the reason why we cannot get an $``L^{p_\delta^*}"$, namely a Poincar\'e--Sobolev, version as in the other theorems.  However, we can derive a global type result for which the following family of class of functions is relevant.

\begin{definition}\label{defmathcalF}   
Let $w$ be a weight in $\mathbb{R}^n$. We define $\F_{w}$ as the class of functions $u$ such that $u\in L^1_{\loc}(\R^n) \cap L^1_{\loc}(\R^n;wdx)$ for which there exists an increasing sequence of cubes $\{Q_j\}_{j\in \mathbb{N}}$ such that 
\[
 \mathbb{R}^n=\bigcup_{j\in\mathbb{N}} Q_j \quad \text{and} \quad \lim_{j\to\infty} u_{\strt{1.5ex}Q_j,w}=0,
\]
where\, $u_{\strt{1.5ex}Q,w}=\frac{1}{w(Q)}\int_Q u\,wdx.$
\end{definition}

Notice that $L_c^{\infty}(\R^n) \subset \F_{w}$ whenever $w$   is a weight outside of $L^1(\mathbb{R}^n)$ .
 Doubling weights, that is, weights
satisfying $0<w(B(x,2r))\le C(w)w(B(x,r))$ for
all $x\in \R^n$ and $r>0$, are typical examples
of weights that do not belong to $L^1(\R^n)$.
We refer to \cite[Corollary 3.8]{BB}.

\begin{corollary}\label{globalversion1}  
Let $0<\delta<1$ and let  $1\leq  p<\infty$. 
There exists a dimensional constant $c_n>0$ such that, for any $w\in A_\infty$,
 \begin{equation}
\left(\int_{\mathbb{R}^n} \lvert u(x) \rvert^p wdx\right)^{1/p}\leq  c_n\, p\,%
(1-\delta)^{1/p}
[w]_{A_{\infty}}\, \left(\int_{\mathbb{R}^n} \int_{\mathbb{R}^n} \frac{\lvert u(x)-u(y)\rvert^p}{\lvert x-y\rvert^{n+\delta p}}\,dy\, M_{\delta p}(w)(x)dx\right)^{1/p}
\end{equation}
 whenever $u\in \F_w$. 
\end{corollary}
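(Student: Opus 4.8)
The plan is to deduce the global inequality on $\R^n$ from the local estimate of Theorem \ref{t.hardy-MS} by exhausting $\R^n$ with an increasing sequence of cubes and passing to the limit, exploiting the fact that $u\in\F_w$ lets us choose the free constants $c$ in Theorem \ref{t.hardy-MS} to be the weighted averages $u_{Q_j,w}$, which tend to $0$. First I would fix $u\in\F_w$ and an admissible increasing sequence of cubes $\{Q_j\}$ with $\R^n=\bigcup_j Q_j$ and $u_{Q_j,w}\to 0$. Applying Theorem \ref{t.hardy-MS} on each $Q_j$ with $c=u_{Q_j,w}$ (which is admissible since the infimum is over all reals, and $u_{Q_j,w}$ is finite because $u\in L^1_\loc(wdx)$) gives
\[
\left(\int_{Q_j}\lvert u(x)-u_{Q_j,w}\rvert^p\,w(x)dx\right)^{\frac1p}
\le c_n\,p\,\delta^{\frac1p}(1-\delta)^{\frac1p}\,[w]_{A_\infty}
\left(\int_{Q_j}\int_{Q_j}\frac{\lvert u(x)-u(y)\rvert^p}{\lvert x-y\rvert^{n+\delta p}}\,dy\,M_{\delta p,Q_j}(w)(x)dx\right)^{\frac1p}.
\]
On the right-hand side I would bound the localized fractional maximal function $M_{\delta p,Q_j}(w)$ pointwise by the global one $M_{\delta p}w$, so that the inner double integral is dominated by $\int_{\R^n}\int_{\R^n}\frac{\lvert u(x)-u(y)\rvert^p}{\lvert x-y\rvert^{n+\delta p}}\,dy\,M_{\delta p}(w)(x)dx$, a quantity independent of $j$; call its $p$-th root $A$. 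Thus the right-hand side is uniformly bounded by $c_n\,p\,\delta^{\frac1p}(1-\delta)^{\frac1p}\,[w]_{A_\infty}\,A$ for every $j$.

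For the left-hand side I would use the triangle inequality in $L^p(Q_j;wdx)$ to write
\[
\left(\int_{Q_j}\lvert u\rvert^p w\,dx\right)^{\frac1p}
\le \left(\int_{Q_j}\lvert u-u_{Q_j,w}\rvert^p w\,dx\right)^{\frac1p}+\lvert u_{Q_j,w}\rvert\,w(Q_j)^{\frac1p}.
\]
The first term is controlled by the uniform bound above. For the second term, the key point is that $w\notin L^1(\R^n)$ is \emph{not} assumed here, so one cannot simply say $w(Q_j)^{1/p}$ is bounded; instead I would argue that if the final right-hand side $A$ is infinite there is nothing to prove, and if it is finite then the uniform bound forces $\int_{Q_j}\lvert u-u_{Q_j,w}\rvert^p w\,dx$ to be bounded, and combined with $u_{Q_j,w}\to 0$ one shows $\int_{Q_j}\lvert u\rvert^p w\,dx$ stays bounded; more carefully, I would first establish the bound on compact pieces. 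The cleanest route: fix any cube $Q_0$; for $j$ large with $Q_0\subset Q_j$,
\[
\left(\int_{Q_0}\lvert u\rvert^p w\,dx\right)^{\frac1p}
\le\left(\int_{Q_j}\lvert u-u_{Q_j,w}\rvert^p w\,dx\right)^{\frac1p}+\lvert u_{Q_j,w}\rvert\,w(Q_0)^{\frac1p}
\le c_n\,p\,\delta^{\frac1p}(1-\delta)^{\frac1p}[w]_{A_\infty}A+\lvert u_{Q_j,w}\rvert\,w(Q_0)^{\frac1p},
\]
and since $w(Q_0)<\infty$ is fixed and $u_{Q_j,w}\to0$, letting $j\to\infty$ gives
\[
\left(\int_{Q_0}\lvert u\rvert^p w\,dx\right)^{\frac1p}\le c_n\,p\,\delta^{\frac1p}(1-\delta)^{\frac1p}[w]_{A_\infty}A.
\]
Finally I would let $Q_0$ increase to $\R^n$ and invoke monotone convergence to obtain $\left(\int_{\R^n}\lvert u\rvert^p w\,dx\right)^{1/p}\le c_n\,p\,\delta^{1/p}(1-\delta)^{1/p}[w]_{A_\infty}A$, which is exactly the claimed inequality.

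The main obstacle I anticipate is the handling of the constant term $\lvert u_{Q_j,w}\rvert\,w(Q_j)^{1/p}$: since $w$ need not be integrable, $w(Q_j)$ may blow up, so one must not try to control this term directly on $Q_j$ but rather freeze a fixed cube $Q_0$ first (where $w(Q_0)$ is finite), send $j\to\infty$ there, and only afterwards enlarge $Q_0$. A secondary, more routine point is justifying the replacement $M_{\delta p,Q_j}(w)\le M_{\delta p}(w)$ uniformly, and checking that the triangle-inequality/monotone-convergence steps are legitimate, which they are since all integrands are nonnegative and measurable. No delicate estimates beyond Theorem \ref{t.hardy-MS} itself are needed; this corollary is purely a limiting argument organized to respect the possible non-integrability of $w$.
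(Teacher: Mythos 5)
Your overall plan is correct and essentially the same as the paper's: exhaust $\R^n$ by the cubes $Q_j$ from the definition of $\F_w$, apply Theorem~\ref{t.hardy-MS} on each $Q_j$, enlarge $M_{\delta p, Q_j}(w)$ to $M_{\delta p}w$ to get a $j$-independent bound, and pass to the limit using $u_{Q_j,w}\to 0$. There is, however, one genuine logical slip in the first step. You write that ``applying Theorem~\ref{t.hardy-MS} with $c=u_{Q_j,w}$'' gives
\[
\left(\int_{Q_j}\lvert u-u_{Q_j,w}\rvert^p\,w\,dx\right)^{1/p}\le c_n p\,\delta^{1/p}(1-\delta)^{1/p}[w]_{A_\infty}\Big(\cdots\Big)^{1/p},
\]
justifying the choice by noting that $u_{Q_j,w}$ is one of the reals over which the infimum on the left of Theorem~\ref{t.hardy-MS} ranges. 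But that reasoning goes the wrong way: the theorem only tells you that the \emph{infimum} is $\le$ the right side, while the quantity you want to bound is $\ge$ the infimum, not $\le$ it. To make this step correct you must invoke the standard comparison between the weighted mean oscillation and the infimum, namely inequality~\eqref{LqOscProp}, which gives
\[
\left(\int_{Q_j}\lvert u-u_{Q_j,w}\rvert^p\,w\,dx\right)^{1/p}\le 2\,\inf_{c\in\R}\left(\int_{Q_j}\lvert u-c\rvert^p\,w\,dx\right)^{1/p},
\]
at the cost of a factor $2$ that is harmlessly absorbed into $c_n$. The paper uses exactly this step.

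Once that is repaired, the rest of your argument is fine and, compared with the paper, differs only in how the limit is taken. The paper applies Fatou's lemma directly to $\lvert u(x)-c_j\rvert^p\chi_{Q_j}(x)w(x)$, which converges pointwise a.e.\ to $\lvert u(x)\rvert^p w(x)$ since $c_j\to 0$ and $\chi_{Q_j}\to 1$; this yields the global estimate in one stroke. You instead freeze a fixed cube $Q_0$, use the $L^p(Q_0;w\,dx)$ triangle inequality to peel off the term $\lvert u_{Q_j,w}\rvert\, w(Q_0)^{1/p}$, let $j\to\infty$ so this term vanishes (crucially $w(Q_0)<\infty$ while $w(Q_j)$ need not stay bounded), and only then exhaust $\R^n$ by monotone convergence in $Q_0$. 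Both routes are correct; the Fatou argument is a bit shorter, but your two-stage limiting is equally valid and your remark about why one must not estimate $\lvert u_{Q_j,w}\rvert\,w(Q_j)^{1/p}$ directly is well taken. The replacement $M_{\delta p, Q_j}(w)\le M_{\delta p}w$ is immediate from the definitions and needs no further justification.
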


The $A_\infty$ restriction is sufficiently mild to allow  many   interesting examples. In particular, $A_1$ weights can be chosen for our result  including power weights.  The fact that the    result in Theorem \ref{t.hardy-MS}    does not involve any geometric constant depending on $\ell(Q)$   is what    allows to  get  a global variant of it,    thus getting   
Theorem \ref{t.MS.Hardy} as a corollary of ours at least in the case $\delta\to1$.

More precisely, as a  consequence of  Theorem \ref{t.hardy-MS}  we get the following general local Hardy type inequality.

\begin{corollary}\label{ThmSpecialcase}
Let $0<\delta<1$ and $\beta>0$, and let  $1\leq p<\frac{n}{\delta}\min\{1,\beta\}$. Let  $\mu$ be any non-negative  Borel  measure such that\, $M_{n-\frac{p\delta}{\beta}}(\mu)$ is finite almost everywhere. Then, there exists a positive dimensional constant $c_n$  such that, for any  $u\in L^1_{\mathrm{loc}}(\R^n)$,
\begin{equation}\label{SpecialFractPI1}
\begin{split}
 \inf_{c\in\R} & \left(\int_{Q} \lvert u(x)-c\rvert^p \,M_{n-\frac{p\delta}{\beta}}(\mu)(x)^{\beta}\,dx\right)^{\frac{1}{p}}
\\&\qquad \leq c_n^{\beta+1}\, %
(1-\delta)^{\frac{1}{p}} \,\frac{p}{(n-\delta p)^{1+\frac1p} }\, \mu(\mathbb{R}^n)^{\frac{\beta}{p}}\, \left(\int_{Q} \int_{Q} \frac{\lvert u(x)-u(y)\rvert^p}{\lvert x-y\rvert^{n+\delta p}}\,dy\,dx\right)^{\frac{1}{p}}.
\end{split}
\end{equation}
In particular if we choose $\beta=1$ and $\mu$ as the Dirac mass at the origin, we have that 
\begin{equation}\label{SpecialFractPI2}
\inf_{c\in\R}\left(\int_{Q} \lvert u(x)-c\rvert^p \frac{1}{|x|^{\delta p}}\,dx\right)^{\frac{1}{p}}
\leq c_n\,  %
(1-\delta)^{\frac{1}{p}} \,  \frac{p}{(n-\delta p)^{1+\frac1p} }\,    \left(\int_{Q} \int_{Q} \frac{\lvert u(x)-u(y)\rvert^p}{\lvert x-y\rvert^{n+\delta p}}\,dy\,dx\right)^\frac{1}{p}.
\end{equation}
\end{corollary}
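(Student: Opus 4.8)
The plan is to derive this from Theorem \ref{t.hardy-MS} by choosing the weight $w$ appropriately and estimating the fractional maximal function term that appears on the right-hand side of \eqref{BBMHardyIneq-Fractp>1}. First I would set $w = M_{n-\frac{p\delta}{\beta}}(\mu)^{\beta}$, or rather a truncation/localization of it, and invoke the well-known fact (Coifman--Rochberg) that powers $(Mf)^{s}$ of a maximal function are $A_1$ weights for $0<s<1$, with an $A_1$ constant controlled by a dimensional multiple of $\frac{1}{1-s}$; here the relevant exponent is $\beta$ when $\beta<1$, so one needs $\beta<1$ to directly land in $A_1$, but since $A_1\subset A_\infty$ and Theorem \ref{t.hardy-MS} only requires $w\in A_\infty$, and since $(M\mu)^\beta$ for a measure is still an $A_\infty$ (indeed $A_1$ after truncation) weight with $[w]_{A_\infty}\le [w]_{A_1}\le c_n$ uniformly — here is where the hypothesis $p<\tfrac{n}{\delta}\min\{1,\beta\}$ and the finiteness of $M_{n-\frac{p\delta}{\beta}}(\mu)$ a.e.\ enter, to guarantee $w$ is a genuine weight (positive, locally integrable) and that $[w]_{A_\infty}$ is bounded by an absolute dimensional constant. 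The condition $M_{n-\frac{p\delta}{\beta}}(\mu)<\infty$ a.e.\ is exactly what makes $w$ finite a.e.

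With $w=M_{n-\frac{p\delta}{\beta}}(\mu)^\beta\in A_\infty$ and $[w]_{A_\infty}\le c_n$, Theorem \ref{t.hardy-MS} gives
\begin{equation*}
\inf_{c\in\R}\left(\int_Q|u-c|^p\,w\,dx\right)^{1/p}\le c_n\,p\,\delta^{1/p}(1-\delta)^{1/p}\left(\int_Q\int_Q\frac{|u(x)-u(y)|^p}{|x-y|^{n+\delta p}}\,dy\;M_{\delta p,Q}(w)(x)\,dx\right)^{1/p},
\end{equation*}
so the remaining task is a pointwise estimate $M_{\delta p,Q}(w)(x)\le M_{\delta p}(w)(x)\le C\,\mu(\R^n)^{\beta/p}\cdot(\text{something independent of }x)$, or more precisely a bound of the form $M_{\delta p}\big(M_{n-\frac{p\delta}{\beta}}(\mu)^\beta\big)\le C\,\mu(\R^n)^{\beta}\cdot\|\,\cdot\,\|$ — this is the technical heart. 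The natural route is: $M_{\delta p}(w)(x)=\sup_{x\in Q'}\ell(Q')^{\delta p}\vint_{Q'}w\,dy$; split into cubes $Q'$ of small side length, where one uses that $\vint_{Q'}M_{n-\frac{p\delta}{\beta}}(\mu)^\beta$ is controlled using Kolmogorov's inequality (since $\beta$ may be $\ge 1$ one passes to a weak-type/Kolmogorov estimate for the sublevel integrability of $M_{n-a}(\mu)$), and cubes of large side length, where one uses the trivial bound $M_{n-\frac{p\delta}{\beta}}(\mu)(y)\lesssim \mu(\R^n)\,\ell(Q'')^{\frac{p\delta}{\beta}-n}$ for the cube $Q''$ realizing (up to a constant) the maximal function at $y$ on that scale. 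Balancing these two regimes in the side length produces the factor $\mu(\R^n)^{\beta/p}$ together with the dimensional growth $\frac{1}{(n-\delta p)^{1+1/p}}$ coming from summing the geometric series over dyadic scales with ratio $\delta p/n<1$ (this is where the $n-\delta p$ in the denominator and the power $1+\tfrac1p$ originate, the extra $\tfrac1p$ coming from the outer $p$-th root combined with one factor of $\frac{1}{n-\delta p}$ absorbed into $[w]_{A_1}$-type control of the original constant).

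I expect the main obstacle to be precisely this last maximal-function estimate, i.e.\ proving a clean bound $M_{\delta p}\big(M_{n-\frac{p\delta}{\beta}}(\mu)^\beta\big)(x)\le c_n^{\beta}\,\frac{1}{(n-\delta p)}\,\mu(\R^n)^{\beta}$ (up to the stated powers) uniformly in $x$, with the correct dependence on both $n-\delta p$ and $\mu(\R^n)$, since one must interpolate between the two scale regimes and track constants carefully — in particular handling the case $\beta\ge 1$ via Kolmogorov rather than a direct $A_1$ argument. Once that uniform bound is in hand it is pulled out of the $y$-integral, leaving $\big(\int_Q\int_Q\frac{|u(x)-u(y)|^p}{|x-y|^{n+\delta p}}\,dy\,dx\big)^{1/p}$ on the right, and the constant assembles to $c_n^{\beta+1}\,\delta^{1/p}(1-\delta)^{1/p}\,\frac{p}{(n-\delta p)^{1+1/p}}\,\mu(\R^n)^{\beta/p}$, which is exactly \eqref{SpecialFractPI1}. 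The special case \eqref{SpecialFractPI2} is then immediate: take $\beta=1$, $\mu=\delta_0$ the Dirac mass at the origin, so that $M_{n-p\delta}(\mu)(x)=c_n|x|^{-\delta p}$ up to a dimensional constant and $\mu(\R^n)=1$, and the left-hand side becomes $\inf_c\big(\int_Q|u-c|^p|x|^{-\delta p}dx\big)^{1/p}$ as claimed.
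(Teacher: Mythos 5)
Your high-level plan is the right one and matches the paper's: apply Theorem~\ref{t.hardy-MS} with the weight $w=M_{n-\frac{p\delta}{\beta}}(\mu)^{\beta}$ and then pull the maximal function $M_{\delta p,Q}(w)$ out of the integral by proving a pointwise bound in terms of $\mu(\R^n)^{\beta}$ and $n-\delta p$. However, there are two genuine problems in the execution.

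First, your assertion that $[w]_{A_\infty}\le[w]_{A_1}\le c_n$ uniformly is false, and your own later accounting quietly contradicts it. The correct statement is the paper's Lemma~\ref{PropertiesFractMax.}~(2): with $\alpha=n-\tfrac{p\delta}{\beta}$ and $\varepsilon=\beta\bigl(\tfrac{n}{p\delta}-1\bigr)$ one has $w=(M_\alpha\mu)^{\frac{n}{n-\alpha}-\varepsilon}$ and
$[w]_{A_1}\le c_n^{\beta}\tfrac{1}{n-p\delta}$,
which blows up as $p\delta\to n$. This factor sits outside the $p$-th root in Theorem~\ref{t.hardy-MS}, so it contributes a full power $\tfrac{1}{n-\delta p}$ to the final constant; it is not an absolute dimensional constant. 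Related to this, your worry that ``one needs $\beta<1$ to directly land in $A_1$'' is a red herring: for the fractional maximal function the critical exponent is $\tfrac{n}{n-\alpha}=\tfrac{n\beta}{p\delta}>\beta$ (precisely because $p\delta<n$), so no truncation device is needed and $\beta\ge 1$ is handled directly by the same lemma.

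Second, you flag the pointwise bound on $M_{\delta p}(w)$ as ``the main obstacle'' and sketch a two-regime scale decomposition that you do not carry out. The paper's proof is much shorter and avoids any balancing argument: it chains Lemma~\ref{PropertiesFractMax.}~(3) (the elementary bound $M_{\delta p}f\le(\tfrac{n}{\delta p})'\lVert f\rVert_{L^{n/(\delta p),\infty}}$) with the weak-type estimate of Lemma~\ref{PropertiesFractMax.}~(1) for $M_\alpha\mu$, yielding
\begin{equation*}
M_{\delta p}\bigl((M_{\alpha}\mu)^{\beta}\bigr)(x)
\le \frac{n}{n-\delta p}\,\lVert M_{\alpha}\mu\rVert_{L^{\frac{n}{n-\alpha},\infty}(\R^n)}^{\beta}
\le \frac{n\,5^{(n-\alpha)\beta}}{n-\delta p}\,\mu(\R^n)^{\beta}
\le \frac{n\,5^{n}}{n-\delta p}\,\mu(\R^n)^{\beta},
\end{equation*}
since $(n-\alpha)\beta=p\delta<n$. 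Taking the $p$-th root contributes $\tfrac{1}{(n-\delta p)^{1/p}}$, and together with the $[w]_{A_\infty}$ factor one gets the stated power $\tfrac{1}{(n-\delta p)^{1+1/p}}$. So the final constant is recovered, but via the $A_1$ constant (power $1$) and the maximal bound (power $1/p$), not the other way around as your sketch suggests. You should replace the uniform-$A_\infty$ claim and the scale-balancing sketch by the two clean applications of Lemma~\ref{PropertiesFractMax.}.
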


We notice that $M_{n-\frac{p\delta}{\beta}}(\mu)^{\beta}$ is a weight that belongs to the $A_1$ class since $p<\frac{n}{\delta}\min\{1,\beta\}$. Indeed, whenever $0\leq \alpha<n$, $\rho>0$,  it is known that $M_{\alpha}(\mu)^{\rho}\in A_1$ if $0<\rho<\frac{n}{n-\alpha}$ (see Lemma \ref{PropertiesFractMax.} (2) for more precise details).

The global versions follow easily. 
 
 \begin{corollary}\label{ThmSpecialcaseg}
 Let $0<\delta<1$ and $\beta>0$,   and let  $1\leq p<\frac{n}{\delta}\min\{1,\beta\}$. Let $\mu$ be any non-negative Borel measure  such that\, $M_{n-\frac{p\delta}{\beta}}(\mu)$ is finite almost everywhere. Then there exists a dimensional positive constant $c_n$  such that 
\begin{equation}\label{SpecialFractPI1g}
\begin{split}
&\left(\int_{\mathbb{R}^n} \lvert u(x) \rvert^p (M_{n-\frac{p\delta}{\beta}}\mu)(x)^{\beta}\,dx\right)^{\frac{1}{p}}
\\&\qquad \qquad \leq  c_n^{\beta+1}\, %
(1-\delta)^{\frac{1}{p}} \, \frac{p}{(n-\delta p)^{1+\frac1p} } \,   \mu(\mathbb{R}^n)^{\frac{\beta}{p}}\, \left(\int_{\mathbb{R}^n} \int_{\mathbb{R}^n} \frac{\lvert u(x)-u(y)\rvert^p}{\lvert x-y\rvert^{n+\delta p}}\,dy\,dx\right)^{\frac{1}{p}}.
\end{split}
\end{equation}
whenever $u\in \F_w$ where $w=(M_{n-\frac{p\delta}{\beta}}\mu)^{\beta}$.
Hence, if $\beta=1$ and  $\mu$ is the Dirac mass at the origin, we have that 
\begin{equation}\label{SpecialFractPI2g}
\left(\int_{\mathbb{R}^n} \lvert u(x) \rvert^p \frac{1}{|x|^{\delta p}}\,dx\right)^{\frac{1}{p}}
 \leq c_n\, \, %
 (1-\delta)^{\frac{1}{p}} \, 
 \frac{p}{(n-\delta p)^{1+\frac1p} } \,    \left(\int_{\mathbb{R}^n} \int_{\mathbb{R}^n} \frac{\lvert u(x)-u(y)\rvert^p}{\lvert x-y\rvert^{n+\delta p}}\,dy\,dx\right)^{\frac{1}{p}},
\end{equation}
whenever $u\in \F_w$ where $w=\frac{1}{|x|^{\delta p}}$.
\end{corollary}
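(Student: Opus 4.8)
The plan is to deduce Corollary \ref{ThmSpecialcaseg} from the local estimate in Corollary \ref{ThmSpecialcase} by a standard exhaustion argument. First I would fix $\mu$ as in the statement and set $w=(M_{n-\frac{p\delta}{\beta}}\mu)^\beta$. As already noted in the text, since $1\le p<\frac n\delta\min\{1,\beta\}$ we have $w\in A_1$ (via Lemma \ref{PropertiesFractMax.}), and in particular $w$ is doubling, hence $w\notin L^1(\R^n)$; thus $L^\infty_c(\R^n)\subset\F_w$ and the class $\F_w$ is nonempty and natural here. Take $u\in\F_w$ with an associated increasing exhausting sequence of cubes $\{Q_j\}$, $\R^n=\bigcup_j Q_j$, such that $u_{Q_j,w}\to 0$. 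Apply \eqref{SpecialFractPI1} on each $Q_j$; the infimum over $c\in\R$ is at least the choice $c=u_{Q_j,w}$, so
\begin{equation*}
\left(\int_{Q_j}|u(x)-u_{Q_j,w}|^p\,w(x)\,dx\right)^{\frac1p}\le c_n^{\beta+1}\,\delta^{\frac1p}(1-\delta)^{\frac1p}\,\frac{p}{(n-\delta p)^{1+\frac1p}}\,\mu(\R^n)^{\frac\beta p}\left(\int_{\R^n}\int_{\R^n}\frac{|u(x)-u(y)|^p}{|x-y|^{n+\delta p}}\,dy\,dx\right)^{\frac1p},
\end{equation*}
where on the right we have simply enlarged the domain of integration from $Q_j\times Q_j$ to $\R^n\times\R^n$, making the bound uniform in $j$.

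Next I would pass to the limit $j\to\infty$. The right-hand side is a fixed finite quantity (finite because otherwise there is nothing to prove), independent of $j$. On the left, $u_{Q_j,w}\to 0$, so for each fixed $x$, $|u(x)-u_{Q_j,w}|^p\to|u(x)|^p$; combining this pointwise convergence with the monotonicity $\car{Q_j}\uparrow\car{\R^n}$, Fatou's lemma gives
\begin{equation*}
\left(\int_{\R^n}|u(x)|^p\,w(x)\,dx\right)^{\frac1p}\le\liminf_{j\to\infty}\left(\int_{Q_j}|u(x)-u_{Q_j,w}|^p\,w(x)\,dx\right)^{\frac1p},
\end{equation*}
and the liminf of the right-hand sides is bounded by the uniform constant above. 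This proves \eqref{SpecialFractPI1g}. The special case \eqref{SpecialFractPI2g} then follows by taking $\beta=1$ and $\mu=\delta_0$ the Dirac mass at the origin, since $M_{n-p\delta}(\delta_0)(x)$ is comparable to $|x|^{-(n-(n-p\delta))}=|x|^{-p\delta}$ up to a dimensional constant (the relevant computation of the fractional maximal function of a point mass), exactly as in the corresponding passage for Corollary \ref{globalversion1}; the dimensional factors are absorbed into $c_n$.

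I do not expect any serious obstacle here: the only points requiring a word of care are (i) checking that $w\notin L^1(\R^n)$ so that $\F_w$ is a sensible class — handled by the doubling property of $M_\alpha(\mu)^\beta\in A_1$ and the cited \cite[Corollary 3.8]{BB}; (ii) justifying that one may replace $\inf_{c}$ by the specific value $c=u_{Q_j,w}$, which is immediate; and (iii) the interchange of limit and integral, which is a routine Fatou argument once the pointwise convergence $u_{Q_j,w}\to0$ is invoked. The computation of $M_{n-p\delta}$ of a Dirac mass for the very last inequality is elementary. In short, the corollary is a soft consequence of the already-proven local Theorem/Corollary \ref{ThmSpecialcase} plus the exhaustion definition of $\F_w$, and the main work has already been done upstream.
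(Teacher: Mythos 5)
Your proof is correct and follows essentially the same route the paper takes (the paper's own proof is the one-line remark that Corollary~\ref{ThmSpecialcaseg} follows from Corollary~\ref{ThmSpecialcase} by Fatou's lemma, and your argument fills in that exhaustion-plus-Fatou scheme exactly as in the proof of Corollary~\ref{globalversion1}). One sentence, however, is stated backwards and as written does not justify the displayed inequality: you say ``the infimum over $c\in\R$ is at least the choice $c=u_{Q_j,w}$''. An infimum over $c$ is of course \emph{at most} the value at any particular $c$, so $\inf_c(\cdots)\le (\cdots)\big|_{c=u_{Q_j,w}}$, which is the wrong direction for what you need. The correct bridge is the two-sided comparability \eqref{LqOscProp}, which gives
\[
\Big(\int_{Q_j}|u-u_{Q_j,w}|^p\,w\,dx\Big)^{1/p}\le 2\,\inf_{c\in\R}\Big(\int_{Q_j}|u-c|^p\,w\,dx\Big)^{1/p},
\]
after which the factor $2$ is absorbed into $c_n$; this is precisely what the paper does explicitly in the proof of Corollary~\ref{globalversion1}. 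With that correction the remainder of your argument — enlarging the double integral to $\R^n\times\R^n$ for a $j$-uniform bound, Fatou with $u_{Q_j,w}\to 0$ and $\chi_{Q_j}\uparrow\chi_{\R^n}$, and the computation $M_{n-p\delta}(\delta_0)(x)\sim|x|^{-p\delta}$ for the power-weight specialization — is sound.
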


We conjecture that  the correct bound 
in terms of $\delta\in (0,1)$  is   actually    $\delta^{\frac{1}{p}} \,(1-\delta)^{\frac1p}$.
We remit to Section \ref{self-improvingmethods} for the proofs of the results of this section.

\subsection{ Weighted   fractional Poincar\'e    type inequalities without the $A_{\infty}$ condition} \label{subsec.weighted.Hardy}

In this section, we state  several extensions of Theorem \ref{t.MS.Hardy}  where power weights are replaced by a general weight $w$. In particular we do not assume that the weight satisfies the $A_{\infty}$  condition. However,  we cannot obtain the  factor $\delta^{1/p}$ in \eqref{MS.Hardy} from Theorem \ref{t.MS.Hardy}.

The results of this section are motivated by the  classical 
Fefferman-Stein  inequality \cite{FS}, 
\begin{equation*}%
\|Mf\|_{\strt{2ex}L^{1,\infty}(w)} \le c_n \,
 \int_{\R^{n}} |f(x)|\,Mw(x)dx.
\end{equation*}
from which we  deduce, for $p\in (1,\infty)$, that 
\begin{equation*}%
\|Mf\|_{\strt{2ex}L^{p}(w)}    \leq c_n\,p'\, \|f\|_{\strt{2ex}L^{p}(Mw)},
\end{equation*}
where no assumption is made on the weight $w$.

\begin{theorem} \label{A1-two weight}     
Let $1\leq p<\infty$ and let $w$ be any weight. %

a) Let $0<\delta< 1$  and let $1\leq p<\infty$. Let also $0<\varepsilon \leq  \delta$, then there is a positive dimensional constant $c_n$ such that for any weight $w$,

\begin{equation}\label{A1-two weight1} 
\begin{aligned}
\Bigg{(}\int_{Q} &\lvert u(x)-u_{Q}\rvert^p \,wdx\Bigg{)}^{1/p}
\\
&\leq c_n\, \frac{ %
(1-\delta)^{\frac{1}{p}}}{ \varepsilon } \ell(Q)^{\varepsilon }
\left(\int_{Q} \int_{Q} \frac{\lvert u(x)-u(y)\rvert^p}{\lvert x-y\rvert^{n+\delta p}}\,dy\, M_{(\delta-\varepsilon)p,  {Q}}(w {\chi_Q})(x)\,dx\right)^{1/p}.
\end{aligned}
\end{equation}
b) Hence, if $\frac12<\delta< 1$,   
\begin{equation}\label{FS-type} 
\begin{aligned}
\Bigg{(}\int_{Q} &\lvert u(x)-u_{Q}\rvert^p \,wdx\Bigg{)}^{1/p}
\\
&\leq c_n\, (1-\delta)^{\frac{1}{p}}  \ell(Q)^{\delta }
\left(\int_{Q} \int_{Q} \frac{\lvert u(x)-u(y)\rvert^p}{\lvert x-y\rvert^{n+\delta p}}\,dy\, M(w {\chi_Q})(x)\,dx\right)^{1/p}
\end{aligned}
\end{equation}
and in particular, for any $(w,v)\in A_1$, we have
\begin{equation}\label{A1-two weight2} 
\begin{aligned} \Bigg{(}\int_{Q} &\lvert u(x)-u_{Q}\rvert^p \,wdx\Bigg{)}^{1/p} \\
&\leq c_{n}\, (1-\delta)^{\frac{1}{p}} \,\ell(Q)^{\delta } [w,v]_{A_1}^{\frac1p}
\left(\int_{Q} \int_{Q} \frac{\lvert u(x)-u(y)\rvert^p}{\lvert x-y\rvert^{n+\delta p}}\,dy\,  v\,dx\right)^{1/p}.
\end{aligned}
\end{equation}

\end{theorem}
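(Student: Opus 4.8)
\textbf{Proof plan for Theorem \ref{A1-two weight}.}
The starting point is the ``key initial estimate'' \eqref{FPI with gain}, or rather its localized weighted version obtained by feeding the functional $a(Q)=[u]_{W^{\delta,p}(Q)}$ into the self-improving machinery of \cite{PR,CP}. The first move is to prove part a). I would bound the left-hand side by running the self-improving argument with the measure $w\,dx$ replaced cleverly: the idea is to use a Calder\'on--Zygmund / dyadic decomposition of $Q$ and, on each dyadic subcube $P\in\mathcal D(Q)$, apply \eqref{FPI with gain} to produce the factor $\delta^{1/p}(1-\delta)^{1/p}\,\ell(P)^{\delta}\,\big(\vint_P\!\int_P \tfrac{|u(x)-u(y)|^p}{|x-y|^{n+\delta p}}\,dy\,dx\big)^{1/p}$. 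The passage from the unweighted average over $P$ to the weighted integral against $w$ is where the fractional maximal function enters: one splits $\ell(P)^{\delta}=\ell(P)^{\epsilon}\,\ell(P)^{\delta-\epsilon}$, keeps $\ell(P)^{\epsilon}\le\ell(Q)^{\epsilon}$ out front, and recognizes that $\ell(P)^{(\delta-\epsilon)p}|P|^{-1}\int_{P}(\,\cdot\,)\,dy$ summed against $w(P)$, after summation over the stopping cubes, is controlled by $\int_Q\int_Q\tfrac{|u(x)-u(y)|^p}{|x-y|^{n+\delta p}}\,dy\,M_{(\delta-\epsilon)p,Q}(w\chi_Q)(x)\,dx$. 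Concretely: for $x\in P$ one has $\ell(P)^{(\delta-\epsilon)p}\,w(P)/|P| \le c_n\, M_{(\delta-\epsilon)p,Q}(w\chi_Q)(x)$ by definition of the restricted fractional maximal operator, and this is exactly the inequality that lets us absorb the weight while only paying the harmless geometric factor $\ell(Q)^{\epsilon}$ and the constant $1/\epsilon$ coming from summing the geometric series $\sum \ell(P)^{\epsilon}$ over nested dyadic generations. The factor $\delta^{1/p}(1-\delta)^{1/p}$ is simply carried through unchanged from \eqref{FPI with gain}, which is the whole point.

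Part b) is then essentially a specialization. Taking $\epsilon=\delta$ is not allowed (the constant $1/\epsilon$ and the exponent $(\delta-\epsilon)p=0$ would degenerate into the ordinary Hardy--Littlewood maximal operator, but the $1/\epsilon$ blows up as $\epsilon\to0$, not as $\epsilon\to\delta$); instead, for $\delta\in(\tfrac12,1)$ one chooses a fixed admissible $\epsilon$, say $\epsilon=\tfrac12$, so that $1/\epsilon=2$ is a harmless constant and $M_{(\delta-\epsilon)p,Q}=M_{(\delta-\frac12)p,Q}\le M_{\frac p2,Q}$, which in turn is dominated pointwise by $\ell(Q)^{p/2}\,M(w\chi_Q)$ — but this reintroduces a power of $\ell(Q)$; the cleaner route, and I suspect the intended one, is to observe that for $\delta>\tfrac12$ one may take $\epsilon=\delta-\tfrac12\cdot\text{(something)}$ so that $(\delta-\epsilon)p$ stays bounded, or more simply to note $M_{(\delta-\epsilon)p,Q}(w\chi_Q)(x)\le \ell(Q)^{(\delta-\epsilon)p}M(w\chi_Q)(x)$ and choose $\epsilon=\delta$ \emph{in the exponent count} by instead picking $\epsilon$ arbitrarily close to $\delta$ while keeping $\delta$ bounded away from $\tfrac12$ — the $\ell(Q)^{\epsilon}\cdot\ell(Q)^{(\delta-\epsilon)p/p}=\ell(Q)^{\delta}$ then recombines to give exactly the $\ell(Q)^{\delta}$ in \eqref{FS-type} with $M(w\chi_Q)$ in place of the fractional maximal function, at the cost of only the constant $c_n$ (absorbing the now-harmless $1/\epsilon\le 2/\delta\le 4$). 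Finally, \eqref{A1-two weight2} is immediate from \eqref{FS-type}: if $(w,v)\in A_1$, meaning $M(w\chi_Q)\le [w,v]_{A_1}\,v$ pointwise a.e.\ (the two-weight $A_1$ condition), substitute this bound into the right-hand side of \eqref{FS-type} and pull the constant $[w,v]_{A_1}^{1/p}$ out of the integral.

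The main obstacle I anticipate is the bookkeeping in part a): making the self-improving/stopping-time argument produce the restricted fractional maximal operator $M_{(\delta-\epsilon)p,Q}(w\chi_Q)$ — as opposed to the global $M_{(\delta-\epsilon)p}(w)$ — with the correct constant that is \emph{uniform} in $\delta,\epsilon$ except for the explicit $1/\epsilon$, and in particular verifying that no hidden factor blowing up as $\delta\to0$ or $\delta\to1$ sneaks in through the summation over dyadic scales. This requires care because the series $\sum_{k\ge0}\ell(Q)^{\epsilon}2^{-k\epsilon}$ contributes $1/(1-2^{-\epsilon})\sim 1/\epsilon$, which is precisely the stated loss, so one must check that every other step contributes only dimensional constants and the untouched $\delta^{1/p}(1-\delta)^{1/p}$ from \eqref{FPI with gain}. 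A secondary technical point is ensuring measurability/finiteness of $M_{(\delta-\epsilon)p,Q}(w\chi_Q)$ so that the right-hand side makes sense — but since $w\chi_Q\in L^1$ and the maximal operator is restricted to $Q$, this is automatic and $M_{(\delta-\epsilon)p,Q}(w\chi_Q)(x)\le c_n\ell(Q)^{(\delta-\epsilon)p}\,|Q|^{-1}w(Q)<\infty$ for every $x$. Once part a) is in place with the sharp constant, parts b) and the $A_1$ corollary are routine substitutions as described above.
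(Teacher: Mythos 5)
Your instinct for part a) is on the right track, and it mirrors the paper's conceptual skeleton: start from \eqref{FPI with gain}, absorb the weight by inserting $M_{(\delta-\epsilon)p,Q}(w\chi_Q)$ using the pointwise lower bound $M_{(\delta-\epsilon)p,Q}(w\chi_Q)(x)\ge \ell(P)^{(\delta-\epsilon)p}\,w(P)/|P|$ for $x\in P\subset Q$, split $\ell(P)^\delta=\ell(P)^\epsilon\ell(P)^{\delta-\epsilon}$, and let the geometric summation over dyadic generations produce the $1/\epsilon$. What you are reinventing, though, is precisely the self-improving machinery already packaged in the paper: one defines the functional $a_{w,u}(Q')=c_n\delta^{1/p}(1-\delta)^{1/p}\ell(Q')^\epsilon\big(w(Q')^{-1}\int_{Q'}\int_{Q'}g\,dy\,M_{(\delta-\epsilon)p,Q}(w\chi_Q)(x)\,dx\big)^{1/p}$, checks via exactly your pointwise bound that $\vint_{Q'}|u-u_{Q'}|\le a_{w,u}(Q')$ for all $Q'\subset Q$, then verifies the very short $SD_p^{n/\epsilon}(w)$ computation (essentially $\sum_i|Q_i|^{p\epsilon/n}(\cdots)\le|\bigcup_iQ_i|^{p\epsilon/n}(\cdots)$), and invokes Theorem~\ref{thm:PR-strong} with $s=n/\epsilon$. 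The $1/\epsilon$ in the conclusion is exactly the factor $s$ from that theorem; no stopping-time bookkeeping needs to be repeated by hand. Your sketch is not wrong, but as written it would have to prove a chain/representation estimate from scratch, which is a longer route and would need more care to keep the constants clean.

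Your part b) contains a genuine error. You assert that taking $\epsilon=\delta$ in part a) ``is not allowed'' and then meander through workarounds; in fact the hypothesis is $0<\epsilon\le\delta$, so $\epsilon=\delta$ \emph{is} allowed, and this is precisely what the paper does. With $\epsilon=\delta$, the exponent $(\delta-\epsilon)p$ is $0$, so $M_{(\delta-\epsilon)p,Q}(w\chi_Q)=M(w\chi_Q)$ (the restricted Hardy--Littlewood maximal function, which is exactly what \eqref{FS-type} wants), $\ell(Q)^\epsilon=\ell(Q)^\delta$, and the constant becomes $\delta^{1/p}(1-\delta)^{1/p}/\delta=\delta^{1/p-1}(1-\delta)^{1/p}\le 2(1-\delta)^{1/p}$ because $\delta>\tfrac12$. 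That is the entire argument; your ``cleaner route'' via $M_{(\delta-\epsilon)p,Q}(w\chi_Q)\le\ell(Q)^{(\delta-\epsilon)p}M(w\chi_Q)$ does arrive at the same conclusion, but it is a detour, and the intermediate claim $M_{(\delta-1/2)p,Q}\le M_{p/2,Q}$ in your first attempt is not how the fractional maximal operator compares across orders. The passage from \eqref{FS-type} to \eqref{A1-two weight2} via $M(w\chi_Q)\le Mw\le[w,v]_{A_1}v$ a.e.\ is correct and matches the paper.
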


\subsection{Weighted fractional  isoperimetric inequalities  with one sharp gain} \label{subsec.weighted.Isop.Ineq} 
 
We also {consider}    local and global \textit{fractional} versions of the  following weighted Gagliardo-Nirenberg inequality 
\begin{equation}\label{isoperimetric}
\left(\int_{\mathbb{R}^n} |u(x)|^{\frac{n}{n-1}}w(x)dx\right)^{\frac{n-1}{n}}\leq c_n\,\int_{\mathbb{R}^n} |\nabla u(x)|(Mw(x))^{\frac{n-1}{n}}dx,
\end{equation}
which is a result that already appeared in \cite{PR} and was also implicit in \cite{FPW2}.  This generalizes the well known Gaglardo-Nirenberg-Sobolev inequality in the limiting case $p=1$, which in turn is equivalent to the  well known isoperimetric inequality, see \cite{O,Fed,M}.   

We obtain a local fractional version of \eqref{isoperimetric},  with the presence of the  sharp gain phenomenon when $\delta \to 1$. 
Observe that   the admissible weights here are more singular since there is no assumption on the weight $w$. 

\begin{theorem} \label{strong_local_isoperimetric}
Let $w$ be a weight in $\R^n$ 
and let $\frac12\leq  \delta<1$. Then there exists a dimensional constant $c_n$ such that 
\begin{equation}\label{weightedLocalIsoper}
 \inf_{c\in\R} \left (\int_Q |u(x)-c|^{\frac{n}{n-\delta}}w(x)\, dx\right )^\frac{n-\delta}{n} \leq c_n\,(1-\delta)  
 \int_Q \int_{Q} \frac{\lvert u(x)-u(y)\rvert}{\lvert x-y\rvert^{n+\delta}}\,dy\, Mw(x)^\frac{n-\delta}{n}\,dx, 
\end{equation}
for any cube $Q$ and any $u\in L^1_{\mathrm{loc}}(\mathbb{R}^n)$. 
\end{theorem}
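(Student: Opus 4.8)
\textbf{Proof proposal for Theorem \ref{strong_local_isoperimetric}.}

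The plan is to obtain \eqref{weightedLocalIsoper} as the $p=1$, $\epsilon=\delta$ endpoint case of the self-improving machinery underlying Theorem \ref{A1-two weight}, but now tracking the improved integrability exponent $\tfrac{n}{n-\delta}$ instead of merely $L^1$. Concretely, I would set up a functional on dyadic subcubes of $Q$ of the form
\[
a(P) = (1-\delta)\,\ell(P)^{\delta}\left(\vint_P \int_P \frac{|u(x)-u(y)|}{|x-y|^{n+\delta}}\,dy\,dx\right)
\]
(or a localized maximal-function variant of it that already incorporates the weight $Mw$), and verify that, thanks to the double-extra-gain fractional Poincaré inequality \eqref{FPI with gain} with $p=1$, this functional controls the oscillation $\vint_P |u-u_P|\,dx$ and satisfies a summability estimate of $D_q$-type in the spirit of \eqref{Dp} with the target exponent $q=\tfrac{n}{n-\delta}$. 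The key structural point is that the exponent $\tfrac{n}{n-\delta} = 1^{*}_{\delta}$ is exactly the fractional Sobolev exponent for $p=1$, so the scaling in $\ell(P)^{\delta}$ is consistent with a $D_q$ inequality; this is what powers the self-improvement from $L^1$ oscillation control to an $L^{n/(n-\delta)}(w)$ bound.

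The steps, in order, would be: (1) reduce to the dyadic setting relative to $Q$ and, via the truncation method of \cite{DIV}, pass from the weak-type formulation to the strong-type one on the left-hand side, so it suffices to prove the $L^{n/(n-\delta),\infty}$ version; (2) invoke the $p=1$ case of \eqref{FPI with gain} to get the ``initial'' estimate $\vint_P|u-u_P|\,dx \le c_n(1-\delta)\,\ell(P)^{\delta}\,(\text{Gagliardo energy density of }P)$, which supplies the gain factor $(1-\delta)$; (3) run the self-improving argument of \cite{PR, CP} — a Calderón–Zygmund stopping-time decomposition at an appropriate height — to upgrade this pointwise-on-cubes control into the claimed quasinorm bound, which is where the weight $w$ enters only through its local fractional maximal function, exactly as in the proof of Theorem \ref{A1-two weight}; the factor $Mw(x)^{(n-\delta)/n}$ appears because the stopping cubes are weighted by $w$ and one estimates $w(P)/|P|$ from above by $\inf_P Mw$ raised to the Sobolev-conjugate power; (4) collect constants, noting that since $p=1$ there is no $(1-\delta)^{1/p}$-versus-$(1-\delta)$ discrepancy and the half-range $\delta\ge \tfrac12$ is only used to keep $\epsilon=\delta$ bounded away from $0$ so that the $1/\epsilon$-type losses in the \eqref{A1-two weight} mechanism are harmless.

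The main obstacle I anticipate is verifying the $D_{n/(n-\delta)}$-type summability for the weighted, maximal-function-twisted functional: one needs that replacing the weight $w$ on the left by the running fractional maximal average $M_{\,\cdot\,,Q}(w\chi_Q)$ on the right does not destroy the additivity over disjoint dyadic subcubes, and this requires a careful argument balancing the weight's local behavior against the exponent $\tfrac{n}{n-\delta}$. A secondary technical point is ensuring the sharp $(1-\delta)$ (rather than $(1-\delta)^{1}$ with a spurious extra power) survives the self-improvement; this should follow because the self-improving argument is ``constant-preserving'' in the $PR$ framework — it only manipulates the geometry of the stopping cubes and leaves the initial constant untouched — but it must be checked that the truncation step of \cite{DIV} does not reintroduce a $\delta$-dependent loss. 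I do not expect the absence of any $A_\infty$ hypothesis on $w$ to cause difficulty here, precisely because, as in Theorem \ref{A1-two weight}, the Fefferman–Stein mechanism sidesteps it: the price is that only the single gain $(1-\delta)$, and not the full $\delta(1-\delta)$, can be extracted.
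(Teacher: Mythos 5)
Your proposal takes a fundamentally different route from the paper, and it runs into an obstacle that I believe is fatal rather than merely technical. The paper does \emph{not} prove Theorem \ref{strong_local_isoperimetric} via the self-improving/stopping-time machinery of Section \ref{self-improvingmethods}; that machinery is used for Theorems \ref{selfIMproveGoodConstant}--\ref{t.hardy-MS} and \ref{A1-two weight}. For Theorem \ref{strong_local_isoperimetric} the paper instead derives a \emph{pointwise representation formula}: starting from \eqref{FPI with gain} with $p=1$, Lemma \ref{l.riesz_poinc} (a telescoping chain argument over dyadic subcubes) yields
\[
|u(x)-u_{Q_0}|\le c_n(1-\delta)\,I_\delta\bigl(g\chi_{Q_0}\bigr)(x),\qquad
g(x)=\int_{Q_0}\frac{|u(x)-u(y)|}{|x-y|^{n+\delta}}\,dy,
\]
where the $\delta$ from $\delta(1-\delta)$ is consumed by the $1/\alpha$ loss in the chain summation (this is exactly where $\delta\ge\tfrac12$ is used). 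Then Lemma \ref{weak_local_isoperimetric} gives the weighted weak-type $L^{1}\to L^{n/(n-\delta),\infty}(w)$ bound for the Riesz potential with $(Mw)^{(n-\delta)/n}$ on the source side, via Minkowski's integral inequality and the pointwise estimate $\big\|\,|\cdot-y|^{-(n-\alpha)}\,\big\|_{L^{n/(n-\alpha),\infty}(w)}\lesssim (Mw(y))^{(n-\alpha)/n}$. Finally the truncation method (Theorem \ref{t.truncation}) upgrades weak to strong. No summability condition on a functional ever needs to be verified.

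The gap in your plan is precisely the one you flag in your penultimate paragraph, and I do not think it can be fixed within your framework. To get $L^{n/(n-\delta)}(w)$ from Theorem \ref{thm:PR-strong} or \ref{thm:Automejoraweak} you would need the functional to satisfy $D_{n/(n-\delta)}(w)$ or $SD^s_{n/(n-\delta)}(w)$. Expanding that condition for your candidate functional forces you to control $\bigl(|Q_i|/w(Q_i)\bigr)^{\delta/(n-\delta)}$ uniformly over the stopping family, which is exactly what the $A_1$ property \eqref{e.key} supplies in Lemma \ref{keyCond1} and which fails for an arbitrary weight: $|Q_i|/w(Q_i)$ can be unboundedly large relative to $|Q|/w(Q)$. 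The $Mw$ on the right-hand side gives a \emph{lower} bound $Mw(x)\ge w(Q_i)/|Q_i|$ for $x\in Q_i$, which points the wrong way. Note also that Theorem \ref{A1-two weight} is self-improving only from $L^1$ oscillation to $L^p$ (same $p$), not to the Sobolev-conjugate exponent; your claim that one can ``track the improved integrability exponent $n/(n-\delta)$'' within that mechanism is the step that breaks. The representation-formula route sidesteps all of this because the weight never appears in the chain argument at all, and the Fefferman--Stein phenomenon enters only at the final weak-type step for $I_\delta$, which holds for any weight.
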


The absence of any quantity depending on the size of the cube yields the following global results.
 
\begin{corollary}\label{globalversion2} 
Let $w$ be a weight in $\R^n$ 
and let $\frac12\leq  \delta<1$.   There is a positive dimensional constant $c_n$ such that, 
\begin{equation*}
\left (\int_{\mathbb{R}^n}  |u(x)|^{\frac{n}{n-\delta}}w(x)\, dx\right )^\frac{n-\delta}{n}\leq  c_n \,(1-\delta)\, \int_{\mathbb{R}^n}    \int_{\R^n} \frac{\lvert u(x)-u(y)\rvert}{\lvert x-y\rvert^{n+\delta }}\,dy \,(Mw(x))^\frac{n-\delta}{n}\, dx,
\end{equation*}
whenever $u\in \F_w$.

\end{corollary}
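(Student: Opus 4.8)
The plan is to deduce the global inequality from the local one in Theorem \ref{strong_local_isoperimetric} by an exhaustion argument, exactly in the spirit of Remark \ref{globalresult}. First I would fix a function $u\in \F_w$ with $w=Mw$ replaced in the usual way by the weight $Mw^{\frac{n-\delta}{n}}$, and take the increasing sequence of cubes $\{Q_j\}_{j\in\mathbb N}$ with $\R^n=\bigcup_j Q_j$ furnished by Definition \ref{defmathcalF}, so that the averages $u_{Q_j,w}\to 0$. Applying Theorem \ref{strong_local_isoperimetric} on each $Q_j$ gives, after choosing $c=c_j$ to be a near-minimizer (or, since the infimum over $c$ dominates the choice $c=u_{Q_j,w}$ only up to a constant, working directly with $c=u_{Q_j,w}$ and absorbing the resulting constant into $c_n$),
\begin{equation*}
\left(\int_{Q_j} |u(x)-u_{Q_j,w}|^{\frac{n}{n-\delta}} w(x)\,dx\right)^{\frac{n-\delta}{n}}
\leq c_n\,(1-\delta)\int_{Q_j}\int_{Q_j}\frac{|u(x)-u(y)|}{|x-y|^{n+\delta}}\,dy\,Mw(x)^{\frac{n-\delta}{n}}\,dx.
\end{equation*}
The right-hand side is increasing in $j$ and bounded above by the global double integral $c_n(1-\delta)\int_{\R^n}\int_{\R^n}\frac{|u(x)-u(y)|}{|x-y|^{n+\delta}}\,dy\,Mw(x)^{\frac{n-\delta}{n}}\,dx$, which I may assume finite (otherwise there is nothing to prove).

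Next I would let $j\to\infty$ on the left. Since $u_{Q_j,w}\to 0$ and $|u-u_{Q_j,w}|\to |u|$ pointwise, Fatou's lemma applied to $\{|u-u_{Q_j,w}|^{\frac{n}{n-\delta}}w\,\chi_{Q_j}\}_j$ yields
\begin{equation*}
\int_{\R^n}|u(x)|^{\frac{n}{n-\delta}}w(x)\,dx
\leq \liminf_{j\to\infty}\int_{Q_j}|u(x)-u_{Q_j,w}|^{\frac{n}{n-\delta}}w(x)\,dx,
\end{equation*}
and combining this with the uniform bound on the right-hand sides gives precisely the claimed global estimate with the same dimensional constant $c_n$ and the sharp factor $(1-\delta)$.

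The only genuinely delicate point is justifying that $u_{Q_j,w}\to 0$ is enough to run Fatou, i.e. that the constants $c_j$ in the local inequality can indeed be taken to converge to $0$; this is handled by noting that the infimum over $c\in\R$ in \eqref{weightedLocalIsoper} is comparable to the choice $c=u_{Q_j,w}$ up to a purely dimensional constant (a standard fact, since $\|u-u_{Q_j,w}\|_{L^{n/(n-\delta)}(Q_j,w)}\le 2\inf_c\|u-c\|_{L^{n/(n-\delta)}(Q_j,w)}$ by the triangle inequality and the fact that $u_{Q_j,w}$ is an $L^1(w)$-average), after which Definition \ref{defmathcalF} supplies $u_{Q_j,w}\to 0$. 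Everything else is the monotone passage to the limit described above; no representation formula or further harmonic-analysis input is needed beyond Theorem \ref{strong_local_isoperimetric} itself.
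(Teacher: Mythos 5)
Your proposal matches the paper's proof essentially verbatim: take the exhausting cubes $\{Q_j\}$ from Definition \ref{defmathcalF}, set $c_j=u_{\strt{1.5ex}Q_j,w}$, use \eqref{LqOscProp} to pass from the infimum over $c$ in Theorem \ref{strong_local_isoperimetric} to the choice $c_j$, and apply Fatou's lemma to let $j\to\infty$. The one garbled phrase ("with $w=Mw$ replaced...") should be dropped — $\F_w$ is taken with respect to the left-hand weight $w$, exactly as your subsequent argument correctly uses — but this is a cosmetic slip and does not affect the validity of the proof.
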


\begin{corollary}\label{fractionalCharacteristics}

Let $w$ be a weight in $\R^n$ 
and let $\frac12\leq  \delta<1$. Assume further that $w\notin L^1(\R^n)$.    Then there is a positive dimensional constant $c_n$ such that   
\begin{align*}
w(E)^{\frac{n-\delta}{n}}\leq  c_n\, (1-\delta) \,  \Bigg(&\int_E\int_{\R^n\setminus E} \frac{1}{\lvert x-y\rvert^{n+\delta}}\,dy\,(Mw(x))^\frac{n-\delta}{n}\,dx\\
&\qquad  +\int_{\R^n\setminus E}\int_{E} \frac{1}{\lvert x-y\rvert^{n+\delta}}\,dy\,(Mw(x))^\frac{n-\delta}{n}\,dx
\Bigg)
\end{align*}
for every bounded measurable set $E\subset \R^n$.
\end{corollary}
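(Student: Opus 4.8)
The plan is to deduce Corollary \ref{fractionalCharacteristics} directly from the global isoperimetric inequality in Corollary \ref{globalversion2} by testing it on the characteristic function $u = \chi_E$. First I would observe that since $w \notin L^1(\R^n)$ and $E$ is bounded, the constant function $0$ is an admissible ``centering'' in the sense of the class $\F_w$: taking an increasing exhaustion $\{Q_j\}$ of $\R^n$ by cubes, the averages $(\chi_E)_{Q_j,w} = w(E \cap Q_j)/w(Q_j) \le w(E)/w(Q_j) \to 0$ as $j \to \infty$ because $w(Q_j) \to \infty$. Also $\chi_E \in L^1_\loc(\R^n) \cap L^1_\loc(\R^n; w\,dx)$ trivially, since $E$ is bounded and measurable. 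Hence $\chi_E \in \F_w$ and Corollary \ref{globalversion2} applies to it.

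Next I would compute the two sides of the inequality in Corollary \ref{globalversion2} for $u = \chi_E$. On the left-hand side, $|\chi_E(x)|^{n/(n-\delta)} = \chi_E(x)$, so the left side equals $\big(\int_{\R^n} \chi_E(x)\, w(x)\,dx\big)^{(n-\delta)/n} = w(E)^{(n-\delta)/n}$. On the right-hand side, the inner double-integral numerator is $|\chi_E(x) - \chi_E(y)|$, which equals $1$ precisely when exactly one of $x, y$ lies in $E$, and $0$ otherwise. Splitting the $x$-integral over $E$ and over $\R^n \setminus E$ and correspondingly restricting the $y$-integral to the complementary set, the right-hand side becomes exactly
\[
c_n\,(1-\delta)\left(\int_E \int_{\R^n \setminus E} \frac{dy}{|x-y|^{n+\delta}}\,(Mw(x))^{\frac{n-\delta}{n}}dx + \int_{\R^n \setminus E}\int_E \frac{dy}{|x-y|^{n+\delta}}\,(Mw(x))^{\frac{n-\delta}{n}}dx\right),
\]
which is the claimed bound.

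The only genuine point requiring care is the membership $\chi_E \in \F_w$, and more precisely the claim that $w(Q_j) \to \infty$ along the exhaustion. This is where the hypothesis $w \notin L^1(\R^n)$ is used: indeed $w(Q_j) = \int_{Q_j} w \,dx \uparrow \int_{\R^n} w\,dx = \infty$ by monotone convergence. I would spell this out, noting that this is the precise reason the hypothesis $w \notin L^1(\R^n)$ appears in the statement (it is exactly the condition, recalled after Definition \ref{defmathcalF}, that guarantees $L^\infty_c(\R^n) \subset \F_w$, and $\chi_E \in L^\infty_c(\R^n)$ when $E$ is bounded and measurable). Everything else is a one-line substitution, so there is no serious obstacle; the proof is short. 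One may additionally remark that the right-hand side is finite for a.e.\ configuration because $Mw$ is finite a.e.\ (as $w$ is locally integrable) and the kernel integrals $\int_{\R^n\setminus E}|x-y|^{-n-\delta}\,dy$, $\int_E |x-y|^{-n-\delta}\,dy$ converge at infinity and away from the boundary, but if the right-hand side is infinite the inequality holds trivially.
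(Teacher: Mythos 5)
Your proof is correct and follows exactly the paper's route: apply Corollary~\ref{globalversion2} to $u=\chi_E$, using $w\notin L^1(\R^n)$ to ensure $\chi_E\in\F_w$ (via $L^\infty_c(\R^n)\subset\F_w$), and then split the double integral according to which of $x,y$ lies in $E$. The paper's proof is a one-liner that omits these verifications; you simply spell them out.
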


To finish the section we provide another consequence of  Theorem \ref{strong_local_isoperimetric}. 

\begin{corollary}\label{BBMwithweights} Let $Q$ be a cube and let $E$ be any measurable set $E\subset Q$. Then, for any weight $w$  and any $0<\varepsilon\leq\frac12$, 
\[
w(Q\setminus E)\,w(E) \leq w(Q)\,\left(
c_n\varepsilon \int_E \int_{Q\setminus E} \frac{Mw(x)^\frac{n-1+\varepsilon}{n}}{\lvert x-y\rvert^{n+1-\varepsilon}}\,dydx
+
c_n\varepsilon \int_{Q\setminus E} \int_E  \frac{Mw(x)^\frac{n-1+\varepsilon}{n}}{\lvert x-y\rvert^{n+1-\varepsilon}}\,dydx
\right) ^{\frac{n}{n-1+\varepsilon}}.
\]
\end{corollary}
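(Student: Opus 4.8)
The plan is to apply Theorem \ref{strong_local_isoperimetric} to the characteristic function $u = \chi_E$ with the substitution $\delta = 1-\varepsilon$, so that $\tfrac12 \le \delta < 1$ corresponds precisely to $0 < \varepsilon \le \tfrac12$. First I would record that for $u=\chi_E$ we have, for every constant $c$,
\[
\int_Q |\chi_E(x)-c|^{\frac{n}{n-\delta}} w(x)\,dx = |1-c|^{\frac{n}{n-\delta}} w(E) + |c|^{\frac{n}{n-\delta}} w(Q\setminus E),
\]
and the infimum over $c\in\R$ of this quantity is comparable to (in fact bounded below by a dimensional multiple of) the "interaction" quantity $\bigl(w(E)^{-\frac{n-\delta}{\delta}} + w(Q\setminus E)^{-\frac{n-\delta}{\delta}}\bigr)^{-\frac{n-\delta}{n}\cdot(-\frac{\delta}{n-\delta})}$; more cleanly, one checks by elementary calculus (minimizing $t\mapsto |1-t|^{q}A + |t|^{q} B$ with $q = \frac{n}{n-\delta}>1$) that
\[
\Bigl(\inf_{c\in\R}\int_Q |\chi_E-c|^{\frac{n}{n-\delta}} w\,dx\Bigr)^{\frac{n-\delta}{n}} \;\gtrsim_n\; \frac{w(E)\,w(Q\setminus E)}{w(Q)},
\]
since $w(E)+w(Q\setminus E) = w(Q)$. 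This is the step that produces the product $w(Q\setminus E)w(E)$ divided by $w(Q)$ on the left-hand side.

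Next I would compute the right-hand side of \eqref{weightedLocalIsoper} for $u = \chi_E$. Since $|\chi_E(x)-\chi_E(y)| = 1$ exactly when one of $x,y$ lies in $E$ and the other in $Q\setminus E$, and is $0$ otherwise, the double integral $\int_Q\int_Q \frac{|\chi_E(x)-\chi_E(y)|}{|x-y|^{n+\delta}}\,dy\,Mw(x)^{\frac{n-\delta}{n}}\,dx$ splits exactly into the two terms
\[
\int_E\int_{Q\setminus E}\frac{Mw(x)^{\frac{n-\delta}{n}}}{|x-y|^{n+\delta}}\,dy\,dx + \int_{Q\setminus E}\int_E \frac{Mw(x)^{\frac{n-\delta}{n}}}{|x-y|^{n+\delta}}\,dy\,dx.
\]
With $\delta = 1-\varepsilon$ this gives $n+\delta = n+1-\varepsilon$ and $\frac{n-\delta}{n} = \frac{n-1+\varepsilon}{n}$, matching the exponents in the statement, and the prefactor $(1-\delta) = \varepsilon$ matches the $c_n\varepsilon$ inside the bracket. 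Combining the lower bound for the left side with Theorem \ref{strong_local_isoperimetric} and raising both sides to the power $\frac{n}{n-\delta} = \frac{n}{n-1+\varepsilon}$ yields the claimed inequality after absorbing dimensional constants; the final display in the corollary is then exactly the rearranged form.

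The main obstacle I anticipate is purely bookkeeping rather than conceptual: one must be careful that Theorem \ref{strong_local_isoperimetric} is stated with $Mw$ (the global Hardy–Littlewood maximal function of $w$ on all of $\R^n$) rather than a localized $M(w\chi_Q)$, so there is nothing to adjust there, but one should double-check that $\chi_E \in L^1_{\loc}(\R^n)$ trivially and that the infimum-over-$c$ formulation is what we exploit — there is no need to identify the optimal $c$, only to bound the infimum below, which is where the convexity of $t\mapsto |t|^{\frac{n}{n-\delta}}$ and the constraint $w(E)+w(Q\setminus E)=w(Q)$ do the work. A secondary point to verify is the direction of the elementary inequality in the first step: one wants a lower bound of the form $\inf_c(\cdots) \ge c_n\, w(E)w(Q\setminus E)/w(Q)$ raised to the appropriate power, which follows by evaluating at, say, $c = 0$ or $c=1$ and taking the smaller, together with $\min\{w(E),w(Q\setminus E)\}\ge w(E)w(Q\setminus E)/w(Q)$.
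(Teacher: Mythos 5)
Your overall plan is the same as the paper's: apply Theorem~\ref{strong_local_isoperimetric} to $u=\chi_E$ with $\delta=1-\varepsilon$, observe that the Gagliardo seminorm splits into exactly the two cross-terms $E\times(Q\setminus E)$ and $(Q\setminus E)\times E$, and lower-bound the weighted oscillation of $\chi_E$ in terms of $w(E)w(Q\setminus E)/w(Q)$. The seminorm computation and the bookkeeping of exponents are correct.

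The gap is in the lower bound for the oscillation, in two respects. First, the displayed ``clean'' inequality
\[
\Bigl(\inf_{c\in\R}\int_Q |\chi_E-c|^{\frac{n}{n-\delta}} w\,dx\Bigr)^{\frac{n-\delta}{n}} \;\gtrsim_n\; \frac{w(E)\,w(Q\setminus E)}{w(Q)}
\]
is false as written: replacing $w$ by $\lambda w$ scales the left side like $\lambda^{(n-\delta)/n}$ and the right side like $\lambda$, so no dimensional constant can work. The correct statement (which does suffice once raised to the power $q=\frac{n}{n-\delta}$) is
\[
\inf_{c\in\R}\int_Q |\chi_E-c|^{q} w\,dx \;\ge\; 2^{-q}\,\frac{w(E)\,w(Q\setminus E)}{w(Q)}.
\]
Second, the mechanism you offer for the lower bound --- ``evaluating at $c=0$ or $c=1$ and taking the smaller'' --- gives $\inf_c f(c)\le\min\{w(E),w(Q\setminus E)\}$, an \emph{upper} bound on the infimum, not a lower bound; chaining it with $\min\{w(E),w(Q\setminus E)\}\ge w(E)w(Q\setminus E)/w(Q)$ produces nothing in the needed direction. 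A correct elementary argument: for every $c\in\R$ at least one of $\lvert c\rvert,\,\lvert 1-c\rvert$ is $\ge\tfrac12$, so $\lvert 1-c\rvert^q w(E)+\lvert c\rvert^q w(Q\setminus E)\ge 2^{-q}\min\{w(E),w(Q\setminus E)\}\ge 2^{-q}w(E)w(Q\setminus E)/w(Q)$. You must then be careful to absorb the factor $2^{q}$ \emph{inside} the parentheses before raising to the power $q$, where it becomes the dimensional factor $2$; otherwise for $n=1$, where $q=\frac{1}{1-\delta}$ is unbounded, the outer constant would blow up. The paper sidesteps the calculus entirely by choosing $c=u_{\strt{1.5ex}Q,w}=w(E)/w(Q)$, integrating only over $Q\setminus E$ to get $\bigl(\tfrac{w(E)}{w(Q)}\bigr)^q\tfrac{w(Q\setminus E)}{w(Q)}$, repeating with $E$ and $Q\setminus E$ swapped, taking the maximum of the two resulting bounds, and using $\max\{\alpha,1-\alpha\}\ge\tfrac12$; you may find that cleaner than locating the critical point.
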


This result is  an extension     of  \cite[Corollary 1] {BBM3} with   $w=1$ and $Q$ the unit cube. This, in turn, is related to an improvement of an estimate from \cite{BBM2}. The improved version was used to prove a conjecture formulated in \cite{BBM1}.  Our result should play a similar role.

\section{Preliminaries and known results} \label{preliminApWeights}

\subsection{Some basic facts about the $A_p$ theory of weights. }

In this section we recall first some well known definitions.

We recall briefly some concepts about the   classes of Muckenhoupt   weights. A weight is a function $w\in L^1_{\mathrm{loc}}(\mathbb{R}^n)$  satisfying $w(x)> 0$ for  almost every point  $x\in\mathbb{R}^n$. 

\begin{definition}  Let $w\in L^1_{\mathrm{loc}}(\mathbb{R}^n)$ be a weight.  

\begin{enumerate}
\item For $1<p<\infty$ we say that $w\in A_p$ if 
\[
[w]_{A_p} := \sup_{Q\in \cQ} \vint_Q w(x)dx\left(\vint_Q w(x)^{1-p'}dx\right)^{p-1}<\infty.
\]
\item We say that $w\in A_1$ if there is a constant $C>0$ such that, for any cube $Q$ in $\mathbb{R}^n$,
\[
\frac{1}{|Q|} \int_Q w(x)dx\leq C\,\mathrm{ess\ inf}_{x\in Q}w(x).
\]
and the $A_1$ constant $[w]_{A_1}$ is defined as the smallest of these $C$.
\item The $A_{\infty}$ class is defined as the union of all   the    $A_p$ classes, that is,
\[ A_\infty = \bigcup_{1\leq p<\infty} A_p,
\]
and the $A_\infty$ constant is defined as
\[ [w]_{A_\infty} = \sup_{Q\in \cQ} \frac{1}{w(Q)} \int_Q M(w\car Q)(x)\,dx.\]
\end{enumerate} 
 \end{definition}

We will use the following notation for the weighted local $L^q$ average over a set $E$ defined as follows:
\begin{equation*}
\left\|f\right \|_{L^q\left (E,\frac{w\, dx}{w(E)}\right )}:= \left (\vint_E |f |^q w dx\right )^{\frac{1}{q}}:=
 \left (\frac{1}{w(E)}\int_E |f |^q w dx\right )^{\frac{1}{q}}.
\end{equation*}

Similarly, we will use the standard notation for the normalized weak $(q,\infty)$ (quasi)norm: for any $0<q<\infty$, measurable $E$ and a positive weight $w$, we define the normalized weak or Marcinkiewicz norm
\begin{equation}\label{normMarcnorm}
\| f \|_{L^{q,\infty}\big(E, \frac{w\,dx}{w(E)}\big)}:= \sup_{t>0}  t \, \left( \frac{1}{w(E)}w(\{x\in E:  |f(x)|>t\}) \right )^{\frac1q},
\end{equation}
where, for a measurable set $E$, we denote its weighted measure by $w(E):=\int_Ew(x)\,dx$.   Similarly we define the global 
weak or Marcinkiewicz norm   
\begin{equation}\label{globMarcnorm}
\|f\|_{L^{q,\infty}(w)}:= \sup_{t>0}  t \, \left( w(\{x\in \R^n:  |f(x)|>t\}) \right )^{\frac1q}.
\end{equation}

We will also use few times the class of pair of $A_p$ weights. 
\begin{definition}\label{TwoWeights}
A pair of weights $(w,v)$ belongs to the $A_p$ class, $1<p<\infty$, if
\begin{equation}\label{eq:Ap-two-weights}
[w,v]_{A_p}:=\sup_{Q\in \cQ} \left(\vint_Q w\,dx\right) \left(\vint_Q v^{1-p'}\,dx\right)^{p-1} <\infty.
\end{equation}
We denote this by $(w,v)\in A_p$. 
Similarly, we say that  $(w,v)\in A_1$ if \[[w,v]_{A_1}= \mathrm{ess\ sup}_{x\in \R^n} \frac{Mw(x)}{v(x)}<\infty,
\]
  where $M$ is the usual Hardy-Littlewood maximal function (see Definition \ref{fracmax} with $\alpha=0$).   
\end{definition}

It is also well known that $(w,v)\in A_p$ if and only if 
\begin{equation}\label{two weight Ap charact}
\frac{1}{|Q|}  \int_Q f\ dx \leq [w,v]_{A_p}^{ \frac{1}{p} }  
\left( \frac{1}{w(Q)}\int_Q f^p v\ dx\right )^{\frac{1}{p}}   \qquad f \geq 0.
\end{equation}

\subsection{Weighted oscillation}

We are often estimating the following weighted $L^q$ oscillation %
\begin{equation}\label{LqOsc}
\left (\frac{1}{w(Q)}\int_Q|u-u_{\strt{1.5ex}Q,w}|^{q}wdx\right )^\frac{1}{q},
\end{equation}
where  $1\le q < \infty$ and $w$ is a weight.
Recall that for a weight $w$, we write $u_{\strt{1.5ex}Q,w}=\frac{1}{w(Q)}\int_Q u\,wdx$.  When estimating 
\eqref{LqOsc} from above, it is possible and often more convenient 
to  replace $u_{\strt{1.5ex}Q,w}$ by any other constant $c$ instead. This is a consequence of inequalities
\begin{equation}\label{LqOscProp}
\inf_{c\in \mathbb{R}}\left (\frac{1}{w(Q)}\int_Q|u-c|^{q}wdx\right )^\frac{1}{q} \leq \left (\frac{1}{w(Q)}\int_Q|u-u_{Q,w}|^{q}wdx\right )^\frac{1}{q}\leq 2\inf_{c\in \mathbb{R}}\left (\frac{1}{w(Q)}\int_Q|u-c|^{q}wdx\right )^\frac{1}{q},
\end{equation}
the latter of which follows by combining triangle and Jensen's inequalities.

\subsection{Review of fractional integrals and related maximal functions}
 
We introduce now   both the fractional and the classical variants of the maximal function.

\begin{definition}\label{fracmax}
Let $\alpha\geq0$ and consider a locally integrable function $u\in L^1_{\mathrm{loc}}(\mathbb{R}^n)$. The fractional maximal function of $u$ is defined by
\[
M_\alpha u(x):=\sup_{Q\ni x}\ell(Q)^\alpha \vint_Q |u(y)|dy,
\]
where the supremum is taken over all cubes $Q$ in $\mathbb{R}^n$ satisfying $x\in Q$. The case $\alpha=0$ corresponds to the non-centered Hardy-Littlewood maximal function and we will denote $M=M_0$. 
 Similarly, for any non-negative Borel measure  $\mu$,  the fractional maximal function $M_{\alpha}\mu$ is defined by 
\[
M_{\alpha}\mu(x):=\sup_{x\in Q: Q\in \cQ}\ell(Q)^\alpha \,\frac{\mu(Q)}{|Q|}.
\]

\end{definition}  

\begin{definition}
Let $\alpha\ge 0$.
If $Q_0\subset \R^n$ is a cube, then the   local   fractional maximal function $M_{\alpha,Q_0} $ is the operator defined by
\[
M_{\alpha,Q_0} u(x)=\sup_{x\in Q}
\ell(Q)^\alpha \vint_{Q} \lvert u(y)\rvert\,d  y,\qquad x\in Q_0,
\]
where the supremum is taken over all cubes $Q \subset Q_0 $ such that $x\in Q$ and $u$ is a measurable function. When   the supremum is taken over all cubes $Q \in \mathcal{D}(Q_0)$ such that $x\in Q$ we get the dyadic local fractional maximal function $M_{\alpha,Q_0}^d$.
\end{definition}
 
Another important operator in the theory, which is related to the fractional maximal function is the fractional integral operator, or  the Riesz potential.
\begin{definition} 
For $0<\alpha<n$,  we define the Riesz potential of a non-negative measurable function $u$ by
\[
I_{\alpha} u(x)=\int_{\R^n} \frac{  u(y) }{\lvert x-y\rvert^{n-\alpha}}\,dy\,,\qquad x\in\R^n,
\]
\end{definition}

We finish this part of the section by proving some elementary properties about weights in relation to the fractional maximal function and the fractional integral operator.  
 
\begin{lemma}\label{PropertiesFractMax.}
Let $0\leq\alpha<n$. 
\begin{enumerate} 
\item Let  $\mu$ be a non-negative Borel measure. Then, 
\begin{equation}\label{weaktypeMalpha}
\|M_{\alpha}\mu\|_{L^{\frac{n}{n-\alpha},\infty}(\mathbb{R}^n) } \leq 5^{n-\alpha} \, \mu(\mathbb{R}^n)\,.
\end{equation}
\item  If $0< \varepsilon\leq\frac{n}{n-\alpha}$, then $(M_{\alpha}\mu)^{\frac{n}{n-\alpha} -\varepsilon} \in A_1
$  for any  non-negative Borel measure $\mu$ such that $M_{\alpha}\mu(x)<\infty$ for almost every $x\in \mathbb{R}^n$. The $A_1$ constant does not depend on $\mu$.  More  precisely,   
\begin{equation}\label{A1estimate}
[(M_{\alpha}\mu)^{\frac{n}{n-\alpha} -\varepsilon}]_{A_1} \leq 
 2^{  \frac{n}{n-\alpha} - \varepsilon} \frac{2n}{n-\alpha}\frac{15^{n-\varepsilon(n-\alpha)}}{\varepsilon}\,.
\end{equation}
\item If $f\in L^1_{\mathrm{loc}}(\mathbb{R}^n)$, then  if further $\alpha>0$,
\[
M_{\alpha} (f)(x)\leq \left(\frac{n}{\alpha}\right)'\|f\|_{L^{\frac{n}{\alpha},\infty}(\mathbb{R}^n)}, \qquad x\in \mathbb{R}^n.
\]
\end{enumerate}
\end{lemma}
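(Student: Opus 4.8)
The plan is to prove the three items in order, as (2) will rely on (1) and (3) is essentially independent. For item (1), I would proceed by a standard Vitali-type covering argument. Fix $t>0$ and let $x$ be any point with $M_\alpha\mu(x)>t$; then there is a cube $Q_x\ni x$ with $\ell(Q_x)^\alpha\mu(Q_x)/|Q_x|>t$, i.e.\ $\ell(Q_x)^{n-\alpha}<\mu(Q_x)/t$ (using $|Q_x|=\ell(Q_x)^n$). Given a bounded portion of the level set, the $5r$-covering lemma (Vitali) extracts a countable pairwise-disjoint subfamily $\{Q_i\}$ such that the dilated cubes $\{5Q_i\}$ cover the level set. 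Then
\[
\big|\{x: M_\alpha\mu(x)>t\}\big| \le \sum_i |5Q_i| = 5^n\sum_i \ell(Q_i)^n = 5^n\sum_i \ell(Q_i)^\alpha\,\ell(Q_i)^{n-\alpha} \le 5^n\sum_i \ell(Q_i)^\alpha\,\frac{\mu(Q_i)}{t}.
\]
Since $\ell(Q_i)^\alpha \le (\mathrm{diam}\,\mathbb R^n)^\alpha$ is not bounded a priori, one instead keeps the factor $\ell(Q_i)^\alpha$ together with $\mu(Q_i)/|Q_i|\cdot|Q_i|$ and uses $\ell(Q_i)^\alpha \mu(Q_i)/|Q_i| \le$ (something); more cleanly, raise to the right power: from $\ell(Q_i)^{n-\alpha} \le \mu(Q_i)/t$ we get $|5Q_i| = 5^n\ell(Q_i)^n = 5^n (\ell(Q_i)^{n-\alpha})^{n/(n-\alpha)} \le 5^n (\mu(Q_i)/t)^{n/(n-\alpha)}$, and since $n/(n-\alpha)\ge 1$ and the $Q_i$ are disjoint, $\sum_i \mu(Q_i)^{n/(n-\alpha)} \le \big(\sum_i\mu(Q_i)\big)^{n/(n-\alpha)} \le \mu(\mathbb R^n)^{n/(n-\alpha)}$. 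Hence $|\{M_\alpha\mu>t\}| \le 5^n t^{-n/(n-\alpha)}\mu(\mathbb R^n)^{n/(n-\alpha)}$, which gives $t\,|\{M_\alpha\mu>t\}|^{(n-\alpha)/n}\le 5^{n-\alpha}\mu(\mathbb R^n)$, i.e.\ \eqref{weaktypeMalpha}. (Taking a supremum over the bounded portions of the level set handles the unbounded case.)

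For item (2), the standard route is via the Jones factorization / the classical fact that $(Mf)^\gamma\in A_1$ for $0<\gamma<1$; here the analogous statement for the fractional maximal function of a measure follows from the weak-type bound in (1) together with a Kolmogorov-type inequality. Concretely: write $s=\frac{n}{n-\alpha}-\varepsilon$ and note $0<s<\frac{n}{n-\alpha}$. To bound $[(M_\alpha\mu)^s]_{A_1}$ I would fix a cube $Q$, split $\mu = \mu\!\restriction_{2Q} + \mu\!\restriction_{(2Q)^c} =: \mu_1+\mu_2$, and use subadditivity $M_\alpha\mu \le M_\alpha\mu_1 + M_\alpha\mu_2$ so $(M_\alpha\mu)^s \le 2^s\big((M_\alpha\mu_1)^s + (M_\alpha\mu_2)^s\big)$ (recall $s$ may exceed $1$ only if $\varepsilon$ is small, but $s \le \frac{n}{n-\alpha} \le$ a fixed number, and the constant $2^s$ accounts for this — this is where the $2^{n/(n-\alpha)-\varepsilon}$ in \eqref{A1estimate} comes from). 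For the local part $\mu_1$: by Kolmogorov's inequality applied to the weak-$(n/(n-\alpha))$ bound from (1),
\[
\vint_Q (M_\alpha\mu_1)^s\,dx \le \frac{1}{|Q|}\,\frac{1}{1 - s\frac{n-\alpha}{n}}\,\|M_\alpha\mu_1\|_{L^{n/(n-\alpha),\infty}}^{s}\,|Q|^{1-s(n-\alpha)/n} \lesssim \frac{1}{\varepsilon(n-\alpha)/n}\,\Big(\frac{\mu(2Q)}{|Q|}\Big)^{s} \cdot (\text{dimensional}),
\]
using $1 - s\frac{n-\alpha}{n} = \varepsilon\frac{n-\alpha}{n}$, which is exactly the source of the $\frac{1}{\varepsilon}\cdot\frac{n}{n-\alpha}$ factor; and then $\mu(2Q)/|Q| \lesssim \ell(2Q)^{-\alpha}\cdot \ell(2Q)^\alpha\mu(2Q)/|2Q| \cdot 2^n$, which is controlled by $\ell(Q)^{-\alpha}\operatorname*{ess\,inf}_Q M_\alpha\mu$ up to a $2$-power; combined with $s$-th powers one recovers $\operatorname*{ess\,inf}_Q (M_\alpha\mu)^s$ up to the claimed constant. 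For the far part $\mu_2$: for $x\in Q$ any cube $Q'\ni x$ contributing to $M_\alpha\mu_2$ must satisfy $\ell(Q')\gtrsim \ell(Q)$, from which one shows $M_\alpha\mu_2(x)$ is comparable to $M_\alpha\mu_2(y)$ for all $x,y\in Q$ (pointwise, up to a dimensional constant), so $(M_\alpha\mu_2)^s$ is essentially constant on $Q$ and contributes harmlessly. Tracking the constants — the $15^{n-\varepsilon(n-\alpha)}$ arising from iterating the $5^{n-\alpha}$ of (1) with the $2Q$-to-$Q$ passages and the dyadic dilations — yields the explicit bound \eqref{A1estimate}.

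Item (3) is the quickest: for $\alpha>0$ and any cube $Q\ni x$, Chebyshev/the layer-cake formula gives
\[
\ell(Q)^\alpha\vint_Q|f|\,dy = \frac{\ell(Q)^\alpha}{|Q|}\int_Q|f| \le \frac{\ell(Q)^\alpha}{|Q|}\int_0^\infty |\{y\in Q: |f(y)|>t\}|\,dt \le \frac{\ell(Q)^\alpha}{|Q|}\int_0^\infty \min\big(|Q|,\ t^{-n/\alpha}\|f\|_{L^{n/\alpha,\infty}}^{n/\alpha}\big)\,dt,
\]
and optimizing the split point $t_0$ where $|Q| = t_0^{-n/\alpha}\|f\|^{n/\alpha}_{L^{n/\alpha,\infty}}$ (so $|Q| = t_0^{-n/\alpha}\|f\|^{n/\alpha}$, i.e.\ $t_0 = |Q|^{-\alpha/n}\|f\|$) gives, after the elementary integral $\int_{t_0}^\infty t^{-n/\alpha}\,dt = \frac{\alpha}{n-\alpha}t_0^{1-n/\alpha} = \frac{1}{(n/\alpha)-1}t_0 |Q|$, a total of $\big(1 + \frac{1}{(n/\alpha)-1}\big)t_0 = (n/\alpha)' \, |Q|^{-\alpha/n}\|f\|_{L^{n/\alpha,\infty}}$; multiplying back by $\ell(Q)^\alpha/|Q|\cdot|Q| = \ell(Q)^\alpha = |Q|^{\alpha/n}$ cancels the $|Q|^{-\alpha/n}$ and leaves $(n/\alpha)'\|f\|_{L^{n/\alpha,\infty}}$, uniformly in $Q\ni x$. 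Taking the supremum over such $Q$ finishes the proof.

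The main obstacle is item (2): carefully organizing the local/far splitting so that the $\varepsilon$-dependence appears exactly as $1/\varepsilon$ (and not, say, $1/\varepsilon^2$), and bookkeeping the various dimensional constants to land on the stated $15^{n-\varepsilon(n-\alpha)}$ and $2^{n/(n-\alpha)-\varepsilon}\frac{2n}{n-\alpha}$. Items (1) and (3) are routine covering/distribution-function arguments.
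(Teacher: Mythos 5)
Your plan follows the paper's own proof essentially line for line: a Vitali $5$-covering argument for (1), a near/far decomposition plus Kolmogorov's inequality for (2), and a layer-cake optimization for (3). The only substantive deviation is in (2), where you split the measure across $2Q$ rather than the paper's $3Q$. That changes the dimensional bookkeeping: in the paper's local term the factor $15^{n-\varepsilon(n-\alpha)}$ arises as $(5\cdot 3)^{s(n-\alpha)}$, with $5$ from the weak-type constant of (1) and $3$ from the $Q$-to-$3Q$ dilation when rewriting $\mu(3Q)/|Q|^{(n-\alpha)/n}$ in terms of $\inf_{Q}M_\alpha\mu$, while in the far term a cube through $y\in Q$ meeting $(3Q)^c$ satisfies $3R\supseteq Q$, yielding $3^{n-\alpha}$. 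With $2Q$ you would instead get $10^{n-\alpha}$ locally and something like $5^{n-\alpha}$ for the far part (since one needs $5R\supseteq Q$ rather than $3R$). Either route proves the $A_1$ membership with a constant of the claimed $O(1/\varepsilon)$ form, but if you want to land on the exact bound \eqref{A1estimate}, you should split at $3Q$ as the paper does; your remark attributing the $15^{n-\varepsilon(n-\alpha)}$ to "$2Q$-to-$Q$ passages" is off by this factor. One more small point: for the subadditivity step you wrote $(a+b)^s\le 2^s(a^s+b^s)$ uniformly; the paper is slightly sharper, using $(a+b)^s\le a^s+b^s$ when $s\le 1$ and $(a+b)^s\le 2^{s-1}(a^s+b^s)$ when $s>1$, which is how the factor $2^{\frac{n}{n-\alpha}-\varepsilon}$ ends up in \eqref{A1estimate}. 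Items (1) and (3) are fine, and in (1) your rearrangement of the exponent (raising cubes to the power $n/(n-\alpha)$ and using $\sum a_i^p\le(\sum a_i)^p$ for $p\ge1$) is a perfectly equivalent variant of the paper's approach of raising $|K|$ to the power $(n-\alpha)/n$ first.
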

\begin{proof}
The proof of part $(1)$ follows from the classical $5$-covering lemma: If $\{Q_i\}_{i=1,\ldots,N}$ is a finite family of cubes in $\R^n$. Then we can extract a subsequence of pairwise disjoint cubes $\{Q_j\}_{j=1,\ldots,M}$ such that
\[
\bigcup_{i=1}^{N} Q_i \subset \bigcup_{j=1}^{M} 5Q_j.
\]
Now if we let  $\Omega_{t} = \{x\in \R^n : M_{\alpha}\mu(x)> t \}$ for a given
$t$ and let $K$ to be any compact subset contained in
$\Omega_t$. Let $x \in K$ then by definition of the maximal function there is a cube $Q=Q_x$ containing $x$ such that
\begin{equation}
\frac{|Q|^{\frac{\alpha}{n}}}{ |Q| } \mu(Q) > t.
\label{ineq}
\end{equation}
Then $K \subset \bigcup_{x \in K } Q_x $ and by compactness we can
extract a finite family of cubes 
$\{Q_i\}_{i=1,\cdots,N}$ such that  $K
\subset \bigcup_{i=1,\cdots,N} Q_i$ and where each cube satisfies
\eqref{ineq}. Then by the  $5$-covering lemma we can extract a subsequence of pairwise disjoint family of cubes 
$\{Q_j\}_{j=1}^{M}$ such that $ \bigcup_{i=1}^{N} Q_i \subset \bigcup_{j=1}^{M} 5Q_j$. Then, since $|\lambda Q|=\lambda^n |Q|$  and since $0<\frac{n-\alpha}{n}\leq 1$
\[
|K|^{\frac{n-\alpha}{n}}\leq \sum_{j=1}^{M} |5Q_j|^{\frac{n-\alpha}{n}}
\leq 5^{n-\alpha}
\sum_{j=1}^{M} |Q_j|^{\frac{n-\alpha}{n}}
\leq
\frac{5^{n-\alpha}}{t}\sum_{j=1}^{M}  
\mu(Q_j)
\leq
\frac{5^{n-\alpha}}{t}  \mu(\R^n).
\]
This yields \eqref{weaktypeMalpha} immediately, since $K$ is arbitrary.

We now prove Part (2). The fact that $(M_{\alpha}\mu)^{\frac{n}{n-\alpha} -\varepsilon}$ is an $A_1$ weight under our assumptions on the parameters is known but we need a proof with precise  bounds as given by \eqref{A1estimate}. Actually we will assume $0< \varepsilon<\frac{n}{n-\alpha}$ since the case $\varepsilon=\frac{n}{n-\alpha}$ is trivial.

We will use the following estimate known sometimes as Kolmogorov's inequality,  if $(X,\mu)$ is a probability space (or even $\mu(X)\leq 1) $, 
and if $0<q<r$
\begin{equation}
\norm{ f }_{L^{q} ( X, \mu) } \leq \left(\frac{r}{r-q}\right)^{\frac1q} \norm{ f }_{L^{r,\infty} ( X, \mu) }. 
\label{kolmogorov}
\end{equation}

Let  $x\in\R^n$ be  such that   $M_{\alpha}\mu(x)<\infty$ and let $Q$ be a cube  containing $x$. Then
\[M_{\alpha}\mu(x) \leq M_{\alpha}(\mu\chi_{3Q} )(x)+ M_{\alpha}(\mu\chi_{(3Q)^c} )(x).
\]

Consider  first the case $\frac{n}{n-\alpha} -\varepsilon \leq 1$. 
Then, 
\[
\vint_{Q}  (M_{\alpha}\mu)(y)^{\frac{n}{n-\alpha} -\varepsilon}\,dy
\leq  \vint_{Q}  (M_{\alpha}(\mu\chi_{3Q} )(y)^{\frac{n}{n-\alpha} -\varepsilon}\,dy +\vint_{Q}  (M_{\alpha}(\mu\chi_{(3Q)^c} ))(y)^{\frac{n}{n-\alpha} -\varepsilon}\,dy =:I+II.
\]
For $I$  we use \eqref{kolmogorov} combined with \eqref{weaktypeMalpha}, 
\[
\begin{split}
I&\leq \frac{n}{n-\alpha} \frac{1}{\varepsilon} 
\|M_{\alpha}(\mu\chi_{3Q})\|_{L^{\frac{n}{n-\alpha},\infty}(Q,\frac{dx}{|Q|}) }^{\frac{n}{n-\alpha} - \varepsilon}  
\leq \frac{n}{n-\alpha}\frac{1}{\varepsilon}  \Big(5^{n-\alpha} \frac{|Q|^{\frac{\alpha}{n}}}{ |Q| } \mu(3Q) \Big)^{\frac{n}{n-\alpha} - \varepsilon}\\
&= \frac{n}{n-\alpha}\frac{1}{\varepsilon}  \Big(15^{n-\alpha} \frac{|3Q|^{\frac{\alpha}{n}}}{ |3Q| } \mu(3Q) \Big)^{\frac{n}{n-\alpha} - \varepsilon} \leq \frac{n}{n-\alpha} \frac{15^{n-\varepsilon(n-\alpha)}}{\varepsilon}  \Big(M_{\alpha}\mu(x) \Big)^{\frac{n}{n-\alpha} - \varepsilon}.
\end{split}
\]

For $II$ we use the following geometrical argument. Let $y\in Q$ and consider  $M_{\alpha}(\mu\chi_{(3Q)^c} )(y)$. 
If $R$ is a cube such that $y\in R \subset 3Q$ then 
$\frac{|R|^{\frac{\alpha}{n}}}{ |R| } \mu((3Q)^c \cap R)=0$, hence we only must take cubes $R$ containing $y$ such that 
$(3Q)^c \cap R \neq \emptyset$. Thus  these cubes $ 3R \supseteq Q$, and therefore
\[
\frac{|R|^{\frac{\alpha}{n}}}{ |R| } \mu((3Q)^c \cap R)\leq 3^{n-\alpha} \frac{|3R|^{\frac{\alpha}{n}}}{ |3R| } \mu((3Q)^c \cap 3R) \leq 
3^{n-\alpha}M_{\alpha}(\mu )(x).
\]
Hence
\[
II\leq   \Big( 3^{n-\alpha}M_{\alpha}\mu(x) \Big)^{\frac{n}{n-\alpha} - \varepsilon} 
= 3^{n-\varepsilon(n-\alpha)}  \Big(M_{\alpha}\mu(x) \Big)^{\frac{n}{n-\alpha} - \varepsilon}. 
\]
Combining, since $0<\varepsilon<\frac{n}{n-\alpha}$
\[
\vint_{Q}  (M_{\alpha}\mu)^{\frac{n}{n-\alpha} -\varepsilon}\,dx  \leq 
\frac{2n}{n-\alpha}\frac{15^{n-\varepsilon(n-\alpha)}}{\varepsilon}  \Big(M_{\alpha}\mu(x) \Big)^{\frac{n}{n-\alpha} - \varepsilon}.
\]

In the case $\frac{n}{n-\alpha} -\varepsilon > 1$, using that $(a+b)^{p}\leq 2^{p-1}(a^p+b^p)$ for any $a,b>0$, $p\ge 1$,
\[
\vint_{Q}  (M_{\alpha}\mu)(y)^{\frac{n}{n-\alpha} -\varepsilon}\,dy
\leq  2^{\frac{n}{n-\alpha} -\varepsilon-1} (I+II)=
2^{\frac{n}{n-\alpha} -\varepsilon}
\frac{n}{n-\alpha}\frac{15^{n-\varepsilon(n-\alpha)}}{\varepsilon}  \Big(M_{\alpha}\mu(x) \Big)^{\frac{n}{n-\alpha} - \varepsilon}.
\]
Finally, combining both estimates, for $0< \varepsilon<\frac{n}{n-\alpha}$, we have  
\[
\vint_{Q}  (M_{\alpha}\mu)^{\frac{n}{n-\alpha} -\varepsilon}\,dy  \leq 
2^{  \frac{n}{n-\alpha} - \varepsilon} \frac{2n}{n-\alpha}\frac{15^{n-\varepsilon(n-\alpha)}}{\varepsilon}  
\inf_{x\in Q} \Big(M_{\alpha}\mu(x) \Big)^{\frac{n}{n-\alpha} - \varepsilon}.
\]
which yields
\[
[(M_{\alpha}\mu)^{\frac{n}{n-\alpha} -\varepsilon}]_{A_1} \leq 
2^{  \frac{n}{n-\alpha} - \varepsilon} \frac{2n}{n-\alpha}\frac{15^{n-\varepsilon(n-\alpha)}}{\varepsilon}.
\]

For the proof of $(3)$, we may assume $f$ to satisfy $\|f\|_{L^{\frac{n}{\alpha},\infty}(\mathbb{R}^n)}=1$. Consider $x\in \mathbb{R}^n$ and let $Q$ be a cube containing $x$. Then, for any $L>0$,
\[
\begin{split}
\int_Q |f(y)|dy&=\int_0^\infty |\{y\in Q:|f(y)|>\lambda\}|d\lambda \\
&=\int_0^L |\{y\in Q:|f(y)|>\lambda\}|d\lambda +\int_L^\infty |\{y\in Q:|f(y)|>\lambda\}|d\lambda \\
&\leq L|Q|+\frac{1}{\frac{n}{\alpha}-1}L^{1-\frac{n}{\alpha}},
\end{split}
\]
and then 
\[
\begin{split}
|Q|^{\frac\alpha{n}-1}\int_Q|f(y)|dy&\leq |Q|^{\frac\alpha{n}-1}\left(L|Q|+\frac{1}{\frac{n}{\alpha}-1}L^{1-\frac{n}{\alpha}}\right)
\\
&= |Q|^{\frac\alpha{n} }\left(L+\frac{1}{|Q|\left(\frac{n}{\alpha}-1\right)}L^{1-\frac{n}{\alpha}}\right),
\end{split}
\]
which equals $\left(\frac{n}{\alpha}\right)'$ when choosing $L=|Q|^{-\frac{\alpha}{n}}$. This concludes the proof of $(3)$.
\end{proof}
 \subsection{The truncation method}

Another important tool that we will be using is the so called ``fractional truncation method", 
see \cite[Theorem 4.1]{DIV} and \cite[Proposition 2.14]{C}.
Similar truncation methods are originally used by Maz'ya \cite{M1}.
The proof of \cite[Theorem 3.2]{DLV} can be easily adapted
to show the following variant of this method.

\begin{theorem}\label{t.truncation}
Let  $1\le p\le q<\infty$ and let $w$, $v$ be a weights in $\R^n$. Let $Q\subset \R^n$ be a cube. 
Then the following conditions are equivalent:
\begin{itemize}
\item[(A)]
There is a constant $C_1>0$ such that inequality
\begin{align*}
\inf_{c\in\R}\sup_{t>0} t\Big(w( & \{x\in Q: |u(x)-c|>t\}) \Big)^{\frac{1}{q}}%
\le C_1
\biggl(\int_{Q}\int_{Q} \frac{\vert u(y)-u(z)\vert^p}{\lvert y-z\rvert^{n+\delta p}}
\,dz\,v(y)\,dy\biggr)^\frac{1}{p}
\end{align*}
holds for  every $u\in L^\infty(Q;wdx)$.
\item[(B)]
There is a constant $C_2>0$ such that inequality
\begin{align*}
\inf_{c\in\R} \left(\int_Q\vert u(x)-c\vert ^{q}\,w(x)\,dx\right)^{\frac{1}{q}}
\le C_2
\biggl(\int_{Q}\int_{Q} \frac{\vert u(y)-u(z)\vert^p}{\lvert y-z\rvert^{n+\delta p}} 
\,dz\,v(y)\,dy\biggr)^\frac{1}{p}
\end{align*}
holds for every measurable function $u$ in $Q$ such that
$\lvert u\rvert<\infty$ almost everywhere in $Q$.
\end{itemize}
Moreover, in 
the implication from {\rm (A)} to {\rm (B)} the constant $C_2$ is of the form $CC_1$, where $C$ does not depend on any of the parameters, 
 and in the implication from {\rm (B)} to {\rm (A)} we have
$C_1=C_2$.
\end{theorem}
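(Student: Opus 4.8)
The plan is to prove the two implications separately. The implication from \textrm{(B)} to \textrm{(A)} is immediate: Chebyshev's inequality gives $\sup_{t>0}t\,\big(w(\{x\in Q:\lvert f(x)\rvert>t\})\big)^{1/q}\le\big(\int_Q\lvert f\rvert^q\,w\,dx\big)^{1/q}$, and applying this with $f=u-c$ and then taking infima over $c\in\R$ yields \textrm{(A)} with $C_1=C_2$. All of the substance is in the converse, which is the fractional truncation method; I would carry it out as follows, assuming (as we may) that the right-hand side of \textrm{(B)} is finite.

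First I would normalize. Since $Q$ is a bounded cube and $w\in L^1_{\mathrm{loc}}$, we have $w(Q)<\infty$, so $u$ has a $w$-median $c_0$ on $Q$, i.e.\ $w(\{x\in Q:u(x)>c_0\})\le w(Q)/2$ and $w(\{x\in Q:u(x)<c_0\})\le w(Q)/2$. Replacing $u$ by $u-c_0$ changes neither the left- nor the right-hand side of \textrm{(A)} and of \textrm{(B)} (both right-hand sides are invariant under adding constants), so assume $c_0=0$. Writing $u=u^+-u^-$ with $u^\pm=\max(\pm u,0)$, using that $t\mapsto t^\pm$ is $1$-Lipschitz so that $\lvert u^\pm(y)-u^\pm(z)\rvert\le\lvert u(y)-u(z)\rvert$, and using the triangle inequality in $L^q(Q;w\,dx)$, it suffices to prove the target estimate for $u^+$ and, symmetrically, for $u^-$; note that now $w(\{x\in Q:u^+(x)>0\})\le w(Q)/2$.

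Next I would run the truncation. For $k\in\Z$ put $u_k:=\min\{(u^+-2^k)^+,2^k\}$ and $O_k:=\{x\in Q:u^+(x)>2^k\}$; since $O_k\subseteq\{u^+>0\}$ we have $w(O_k)\le w(Q)/2$. Each $u_k$ is bounded, hence lies in $L^\infty(Q;w\,dx)$ and is admissible in \textrm{(A)}; moreover $u_k\equiv 0$ on $Q\setminus O_k$ and $u_k\equiv 2^k$ on $O_{k+1}$. A short case analysis on the infimum variable $c$ (if $\lvert c\rvert\le 2^{k-1}$ test on $O_{k+1}$; if $\lvert c\rvert>2^{k-1}$ test on $Q\setminus O_k$ and use $w(O_{k+1})\le w(Q)/2\le w(Q\setminus O_k)$) shows that the left-hand side of \textrm{(A)} applied to $u_k$ is at least $2^{k-2}w(O_{k+1})^{1/q}$, so \textrm{(A)} gives $2^{kp}w(O_{k+1})^{p/q}\lesssim C_1^p\,B_k$, where $B_k:=\int_Q\int_Q\lvert u_k(y)-u_k(z)\rvert^p\lvert y-z\rvert^{-n-\delta p}\,dz\,v(y)\,dy$. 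A layer-cake decomposition over the sets $\{2^k<u^+\le 2^{k+1}\}$ gives $\int_Q\lvert u^+\rvert^q\,w\,dx\lesssim\sum_{k\in\Z}2^{kq}w(O_{k+1})=\sum_k\big(2^{kp}w(O_{k+1})^{p/q}\big)^{q/p}\lesssim C_1^q\sum_k B_k^{q/p}\le C_1^q\big(\sum_k B_k\big)^{q/p}$, the last step being $\ell^1\hookrightarrow\ell^{q/p}$ (here $q\ge p$ is used). Finally, the telescoping identity $\sum_{k\in\Z}u_k(t)=u^+(t)$ together with the monotonicity and $1$-Lipschitz continuity of each $u_k(\cdot)$ yields $\sum_k\lvert u_k(y)-u_k(z)\rvert^p\le\lvert u(y)-u(z)\rvert^p$: assuming without loss of generality $u^+(y)\ge u^+(z)$ and writing $D:=u^+(y)-u^+(z)$, the numbers $a_k:=u_k(y)-u_k(z)$ satisfy $0\le a_k\le D$ and $\sum_k a_k=D$, whence $\sum_k a_k^p\le D^{p-1}\sum_k a_k=D^p\le\lvert u(y)-u(z)\rvert^p$ because $p\ge 1$. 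Integrating, $\sum_k B_k\le\int_Q\int_Q\lvert u(y)-u(z)\rvert^p\lvert y-z\rvert^{-n-\delta p}\,dz\,v(y)\,dy$; combining the last displays gives \textrm{(B)} for $u^+$, hence \textrm{(B)} with $C_2=C\,C_1$ for an absolute $C$.

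The step I expect to be the crux is making the truncation cooperate with both sides at once. On the left, one must extract from the weak-type hypothesis a lower bound of the sharp order $2^kw(O_{k+1})^{1/q}$ that is \emph{uniform in the infimum variable $c$} — this is exactly what forces the $w$-median normalization and the case split. On the right, one must dominate the superposition $\sum_k B_k$ of the Gagliardo energies of the truncates by the single Gagliardo energy of $u$, for which the identity $\sum_k u_k=u^+$ and the elementary convexity inequality $\sum a_k^p\le\big(\sum a_k\big)^p$ are the essential ingredients; the hypotheses $p\ge1$ and $p\le q$ enter precisely here. Everything else is bookkeeping — in particular no separate approximation of $u$ by bounded functions is needed, since the truncates $u_k$ are bounded by construction — and tracking the absolute constants above gives the stated relations $C_1=C_2$ and $C_2=CC_1$.
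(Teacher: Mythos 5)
Your proof is correct and is exactly the standard dyadic truncation argument (median normalization, positive/negative split, truncates $u_k=\min\{(u^+-2^k)^+,2^k\}$, lower bound for the weak norm via the median, layer-cake on the left, superadditivity $\sum_k|u_k(y)-u_k(z)|^p\le|u(y)-u(z)|^p$ on the right, and $\ell^1\hookrightarrow\ell^{q/p}$) that the paper invokes by reference to Dyda--Lehrb\"ack--V\"ah\"akangas and Dyda--Ihnatsyeva--V\"ah\"akangas without writing it out. The constant bookkeeping is right: the factor from the layer-cake/truncation steps enters as $c^p$ and $c^q$ and is removed by the $p$-th and $q$-th roots, so $C$ is indeed an absolute constant.
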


\section{  Harmonic analysis and self-improving theory }\label{self-improvingmethods} 

In this section we will provide the proofs for Theorems \ref{selfIMproveGoodConstant},   \ref{selfIMproveBadConstant}, and \ref{t.hardy-MS}. We recall first some definitions and results.

\subsection{Some recent results} \label{SelfImprResults}

The plan is to derive Poincar\'e--Sobolev type inequalities from a Poincar\'e type inequality
within a general context. This will be accomplished by using some improvement results from the literature. 
Being more precise, we will be considering, as a starting point estimates, of the form 

\begin{equation}\label{eq:general-starting-a(Q)}
\vint_Q |f-f_Q|dx\leq a(Q)\,,\qquad Q\in \cQ\,,
\end{equation}
where $a:\cQ \to  [0,\infty)$ is some general functional with no restriction. 
The functional may or may not depend on $f$. In the case it does,  we will write $a_f$, but the required condition will be uniform on $f$.

The key idea is that inequality \eqref{eq:general-starting-a(Q)} enjoys a self-improving property.  It is remarkable that  this property is not so much related to the presence of a gradient on the right hand side of \eqref{eq:general-starting-a(Q)}, but to a discrete geometrical summation condition associated to the functional $a$.   The theory is very flexible and can be developed within the contexts of other geometries, see \cite{CMPR}.

We will prove  Theorems \ref{selfIMproveGoodConstant} and  \ref{selfIMproveBadConstant} using ideas from \cite{PR}  applying  a recent improvement of  \cite[Theorem 2.3]{FPW1}, namely \cite[Theorem 1.5]{CP}. 
 The following definition is  the  key   in the theory.

\begin{definition} \label{def:smallness}
 Let $0<p<\infty$, let $w$ be a weight
and let $a:\mathcal{Q}\to  [0,\infty)$. 

1)  For $s>0$, we  denote    $a\in SD_{p}^{s}(w)$, if there is a   positive   constant $c$ such that, for any family 
of disjoint dyadic subcubes $\left\lbrace Q_i\right\rbrace_i $ of any given cube $Q$,  the following inequality holds
\begin{equation}\label{eq:SDp}
\left( \sum_i a(Q_i)^p \frac{w(Q_i)}{w(Q)}  \right)^{\frac1p} \leq c 
\left(\frac{|\bigcup_iQ_i|}{|Q|} \right )^{ \frac1s }
a(Q).
\end{equation}
The best possible constant $c$, the infimum of the constants in  the last inequality, is denoted by $\|a\|_{SD_{p}^s}$.

2) We say that the functional $a$ satisfies the weighted  $D_{p}(w)$  condition  if there is a constant $c$ 
such that, for any family of disjoint dyadic cubes $\left\lbrace Q_i\right\rbrace_i $ of any given cube  $Q$, the following inequality holds
\begin{equation}\label{eq:Dp}
\left( \sum_i a(Q_i)^p \frac{w(Q_i)}{w(Q)}  \right)^{\frac1p} \leq c\,a(Q).
\end{equation}
The best possible constant $c$ above is denoted by $\|a\|_{D_{p}(w)}$. We will write in this case that $a\in D_{p}(w)$. 

\end{definition}

Next  result from \cite{CP} will be relevant.  %
 It is an improved version of the main result in \cite{FPW1} but with linear bounds in both $[w]_{A_{\infty}}$ and $p$, instead of exponential.

\begin{theorem}\label{thm:Automejoraweak}
Let $w$ be any $A_{\infty}$ weight in $\mathbb{R}^n$ and $a\in D_{p}(w)$ from \eqref{eq:Dp}. Let $f$ be a locally integrable function such that,
\begin{equation*}\label{eq:initialHypcR}
\vint_{Q} |f - f_Q|\leq a(Q) \qquad  Q\in \cQ.
\end{equation*}
Then there exists a dimensional constant $c_n>0$ such that for every $Q\in \cQ$,
\begin{equation*}
\big \| f-f_Q \big \|_{L^{p,\infty}\big( Q, \frac{w\,dx}{w(Q)}\big)} \leq  c_n\, p\, [w]_{A_{\infty}}\,\|a\|_{D_p(w)} \, a(Q).  
\end{equation*}
\end{theorem}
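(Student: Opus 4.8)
The plan is to prove Theorem~\ref{thm:Automejoraweak} by a Calder\'on--Zygmund stopping time argument, exploiting the $D_p(w)$ condition of the functional $a$ together with the $A_\infty$ property of $w$ in a quantitative way. The target is the weak-type bound $\|f-f_Q\|_{L^{p,\infty}(Q,w\,dx/w(Q))}\le c_n\,p\,[w]_{A_\infty}\,\|a\|_{D_p(w)}\,a(Q)$, so I would fix $\lambda>0$ and aim to control $w(\{x\in Q:|f(x)-f_Q|>\lambda\})$.

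First I would perform, for each fixed $\lambda$, a Calder\'on--Zygmund decomposition of $Q$ at an appropriate height adapted to $a(Q)$: by the local $L^1$ hypothesis $\vint_Q|f-f_Q|\le a(Q)$, there is a (possibly empty) family of maximal disjoint dyadic subcubes $\{Q_i\}\in\mathcal D(Q)$ on which the oscillation starts to exceed a threshold comparable to $a(Q)$. On $Q\setminus\bigcup_i Q_i$ the function $f$ is close to $f_Q$ off a null set, so the bad set is essentially $\bigcup_i Q_i$. The key point is a localization/iteration: on each $Q_i$ one has again $\vint_{Q_i}|f-f_{Q_i}|\le a(Q_i)$, and $|f_{Q_i}-f_Q|\lesssim a(Q)$ by maximality, so one can recurse. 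Iterating produces, at generation $k$, a family of dyadic cubes with $\sum$ of their $|\cdot|$-measures geometrically small and on which $|f-f_Q|\lesssim k\,a(Q)$ up to controlled errors; this is the standard good-$\lambda$ / exponential-decay mechanism behind John--Nirenberg type self-improvement. The novelty here, following \cite{FPW1,CP}, is that we track the $w$-measure rather than Lebesgue measure of these families, and this is exactly where $D_p(w)$ enters.

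Second, I would feed the geometric decay of the Lebesgue measures of the stopping cubes into the $A_\infty$ hypothesis on $w$ to transfer the decay to the $w$-measures: the estimate $[w]_{A_\infty}=\sup_Q \frac{1}{w(Q)}\int_Q M(w\chi_Q)\,dx$ gives a quantitative comparison $w(E)/w(Q)\lesssim [w]_{A_\infty}\,(|E|/|Q|)^{\epsilon}$-type control in the form actually needed (equivalently, the open property / reverse H\"older form of $A_\infty$, with the dependence on $[w]_{A_\infty}$ being linear in the relevant estimate). Combining this with the $D_p(w)$ inequality $\big(\sum_i a(Q_i)^p w(Q_i)/w(Q)\big)^{1/p}\le \|a\|_{D_p(w)}\,a(Q)$ applied at each generation, one gets $w\big(\{|f-f_Q|>t\,a(Q)\}\cap Q\big)/w(Q)\lesssim (c_n[w]_{A_\infty}\|a\|_{D_p(w)})^{p}\,t^{-p}$ after summing the geometric series in the generation index $k$; optimizing/bookkeeping the constants yields the linear dependence on $p$ and on $[w]_{A_\infty}$ rather than the exponential dependence one would get from a cruder argument. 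Rescaling $\lambda=t\,a(Q)$ gives the claimed weak-type norm bound.

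The main obstacle is obtaining the \emph{linear} (not exponential) dependence in $p$ and in $[w]_{A_\infty}$ simultaneously: a naive John--Nirenberg iteration loses exponentially in the number of generations, and one must instead organize the summation so that the per-generation loss is compensated sharply, which is precisely the content of \cite[Theorem 1.5]{CP} and is the delicate part. Since the statement here is quoted from \cite{CP}, in the paper I would simply invoke that theorem; were I proving it from scratch, I would follow the sparse-domination / exponential-integrability refinement in \cite{CP} that replaces the crude geometric bound by an optimal one, and check that the $A_\infty$ constant enters only through the single linear factor coming from the reverse-H\"older/open property. The remaining steps (the Calder\'on--Zygmund selection, maximality estimates $|f_{Q_i}-f_Q|\lesssim a(Q)$, and the geometric summation) are routine.
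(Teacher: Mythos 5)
The paper does not prove Theorem~\ref{thm:Automejoraweak}: it is quoted verbatim as \cite[Theorem 1.5]{CP}, and the surrounding text merely remarks that it improves \cite[Theorem 2.3]{FPW1} from exponential to linear dependence in $p$ and $[w]_{A_\infty}$. You correctly recognize this and conclude that inside the paper one simply invokes the citation, which matches what the authors actually do. So there is no ``paper's own proof'' to compare against; what can be compared is your sketch with the argument of \cite{CP}, and there your blueprint is broadly right: a Calder\'on--Zygmund stopping-time selection at height comparable to $a(Q)$, the maximality bound $|f_{Q_i}-f_Q|\lesssim a(Q)$, iteration in generations, the $D_p(w)$ packing condition to sum $a(Q_i)^p w(Q_i)$, and a quantitative $A_\infty$ input to pass from Lebesgue-measure decay of the stopping families to $w$-measure decay. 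You also correctly identify the genuine subtlety as the linear (not exponential) tracking of both $p$ and $[w]_{A_\infty}$.

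One cautionary remark on the $A_\infty$ ingredient, in case you ever need to fill in the sketch. The intermediate estimate you write, $w(E)/w(Q)\lesssim [w]_{A_\infty}(|E|/|Q|)^\epsilon$, is not the right shape: the reverse-H\"older exponent $\epsilon$ already degenerates like $1/[w]_{A_\infty}$, so feeding that form into a geometric iteration would precisely reintroduce the exponential loss you are trying to avoid. The estimate actually used is the quantitative ``open'' or contraction property of the Fujii--Wilson constant: there is a dimensional $c_n>0$ such that for any disjoint family $\{Q_i\}\subset\mathcal D(Q)$ with $\sum_i|Q_i|\le\frac12|Q|$ (say), one has $\sum_i w(Q_i)\le\big(1-\tfrac{c_n}{[w]_{A_\infty}}\big)\,w(Q)$. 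Iterating this gives, at generation $k$, a $w$-mass bounded by $\rho^k w(Q)$ with $\rho=1-\tfrac{c_n}{[w]_{A_\infty}}$, and the final constant involves a factor $\sum_k\rho^k\sim[w]_{A_\infty}$ linearly, exactly as needed. The linear factor $p$ then arises from how the $D_p(w)$ sum interacts with the choice of the stopping threshold and the level at which the series is cut. With that correction your outline is consistent with \cite{CP}; as an in-paper justification, deferring to that reference is the appropriate move.
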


Using the stronger condition \eqref{eq:SDp} we can get a better result. We need to state the following very interesting improvement of one of the main results in \cite{PR}, which can be found   %
 as a consequence of the main theorem in \cite{LLO}.

\begin{theorem}\label{thm:PR-strong}

Let $w$ be any weight and let $a$ such that for some $p\ge 1$ it satisfies the weighted condition $SD^s_p(w)$ from \eqref{eq:SDp} with $s>1$ and constant $\|a\|_{\strt{1.8ex}SD_{p}^s(w)}$. 
Let $f$ be a locally integrable function such that
\begin{equation*}
\frac{1}{|Q|}\int_{Q} |f-f_{Q}| \le a(Q),
\end{equation*}
for every cube $Q$. Then, there exists a dimensional constant $c_n>0$ such that for any cube $Q$ the following inequality holds
\begin{equation}\label{eq:norm}
\|f-f_Q\|_{ L^{p}(Q,\frac{wdx}{w(Q)}) } \leq  c_n\, s \|a\|_{\strt{1.8ex}SD_{p}^s(w)}  a(Q).
\end{equation} 

\end{theorem}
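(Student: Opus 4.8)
The plan is to decouple the argument into a \emph{geometric} reduction, which turns the pointwise oscillation hypothesis into a sparse domination of $f-f_{Q}$, and an \emph{analytic} step, which estimates the resulting sparse sum in $L^{p}\bigl(Q,\tfrac{w\,dx}{w(Q)}\bigr)$ from the summability condition \eqref{eq:SDp} alone. The second step is, in substance, the main theorem of \cite{LLO}, so one may simply invoke it after matching hypotheses; below we describe the self-contained route and isolate the delicate point.

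\emph{Step 1: sparse domination.} Fix a cube $Q_{0}$ and run the local Calder\'on--Zygmund iteration on $f$: set $\mathcal S_{0}=\{Q_{0}\}$, and given a pairwise disjoint family $\mathcal S_{k}\subseteq\mathcal D(Q_{0})$, let $\mathcal S_{k+1}$ consist of the maximal dyadic cubes $R'\subsetneq R$, $R\in\mathcal S_{k}$, with $\vint_{R'}\lvert f-f_{R}\rvert>2^{n+1}a(R)$. Since $\vint_{R}\lvert f-f_{R}\rvert\le a(R)$, the weak $(1,1)$ bound for the dyadic maximal function gives $\bigl\lvert\bigcup_{R'\in\mathcal S_{k+1},\,R'\subseteq R}R'\bigr\rvert\le 2^{-(n+1)}\lvert R\rvert$, so $\mathcal S:=\bigcup_{k}\mathcal S_{k}$ is a sparse family; moreover $\lvert f-f_{R}\rvert\le 2^{n+1}a(R)$ a.e.\ on $R$ minus its $\mathcal S_{k+1}$-descendants, and $\lvert f_{R'}-f_{R}\rvert\le 2^{2n+1}a(R)$. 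Telescoping along the ancestor chain of almost every point then yields
\[
\lvert f(x)-f_{Q_{0}}\rvert\le c_{n}\sum_{R\in\mathcal S}a(R)\,\chi_{R}(x),\qquad x\in Q_{0},
\]
and hence $\bigl\|f-f_{Q_{0}}\bigr\|_{L^{p}(Q_{0},\,w\,dx/w(Q_{0}))}\le c_{n}\bigl\|\sum_{R\in\mathcal S}a(R)\chi_{R}\bigr\|_{L^{p}(Q_{0},\,w\,dx/w(Q_{0}))}$, an inequality no longer involving $f$.

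\emph{Step 2: the weighted sparse estimate.} It remains to bound $\bigl\|\sum_{R\in\mathcal S}a(R)\chi_{R}\bigr\|_{L^{p}(Q_{0},\,w\,dx/w(Q_{0}))}$ by $c_{n}\,s\,\|a\|_{SD_{p}^{s}(w)}\,a(Q_{0})$ for every sparse $\mathcal S\subseteq\mathcal D(Q_{0})$ with $Q_{0}\in\mathcal S$; this is the point at which \eqref{eq:SDp} is used, and it is precisely the sparse-domination principle of \cite{LLO}. The mechanism is: with $\nu(R):=a(R)^{p}w(R)$, one organizes $\mathcal S$ through a corona decomposition --- stopping when $a$ or the normalized $w$-average roughly doubles relative to the corona root --- and applies \eqref{eq:SDp} to the disjoint families of corona-children; the density factor $\bigl(\lvert\bigcup R_{i}\rvert/\lvert R\rvert\bigr)^{1/s}$ in \eqref{eq:SDp} supplies the extra decay that turns the sum over generations into a convergent series whose total, after optimizing the stopping thresholds, has size $\asymp s$, while $\|a\|_{SD_{p}^{s}(w)}$ enters linearly straight from \eqref{eq:SDp}. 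Together with Step 1 this is \eqref{eq:norm}. (If one only wants a bound involving $[w]_{A_{\infty}}$, one may instead note that $a\in D_{p}(w)$ --- see \eqref{eq:Dp} --- feed this into Theorem \ref{thm:Automejoraweak}, and upgrade the weak-type conclusion; using \eqref{eq:SDp} with $s$ finite is exactly what allows us to discard any hypothesis on $w$.)

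\emph{Main obstacle.} All the difficulty is in Step 2, and it is of two kinds. First, since $w$ need not lie in $A_{\infty}$, the Calder\'on--Zygmund sets $\bigcup\mathcal S_{k+1}$, thin for Lebesgue measure inside their parents, need not be thin for $w$; thus the density gain cannot be transported through a weighted weak-type inequality and must instead be carried \emph{by the functional}, i.e.\ through the factors $a(R_{i})^{p}$ in \eqref{eq:SDp}. Second, extracting the \emph{sharp} constant --- linear in $\|a\|_{SD_{p}^{s}(w)}$, linear in $s$, independent of $p$ --- is exactly the improvement of \cite{LLO} over the original argument of \cite{PR}: a crude iteration of \eqref{eq:SDp} with a fixed density gain converges only when $\|a\|_{SD_{p}^{s}(w)}$ is small, and forcing convergence by enlarging the Calder\'on--Zygmund threshold in Step 1 would reintroduce a power $\|a\|_{SD_{p}^{s}(w)}^{s}$ through the telescoping. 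Keeping the bookkeeping of Step 2 tight enough to avoid this is the crux.
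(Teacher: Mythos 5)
The paper does not prove Theorem~\ref{thm:PR-strong}; it only states it and cites \cite{LLO}, so there is no internal argument to compare yours against, and your decision in Step~2 to ``simply invoke \cite{LLO} after matching hypotheses'' is exactly the authors' route. Your Step~1 is correct and complete: the iterated local Calder\'on--Zygmund selection with threshold $2^{n+1}a(R)$ yields the density bound $|\bigcup_{R'\in\mathcal S_{k+1},R'\subseteq R}R'|\le 2^{-(n+1)}|R|$, the pointwise and averaged oscillation bounds are as you state, and telescoping along the ancestor chain gives $|f(x)-f_{Q_0}|\le c_n\sum_{R\in\mathcal S}a(R)\chi_R(x)$ a.e.\ with a family $\mathcal S$ whose generations $\mathcal S^{(m)}$ satisfy $|\bigcup\mathcal S^{(m)}|\le 2^{-(n+1)m}|Q_0|$. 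Your diagnosis in the ``main obstacle'' paragraph is also accurate: a fixed-density iteration of \eqref{eq:SDp} on parent/children diverges for $\|a\|_{SD_p^s(w)}>2^{(n+1)/s}$, and rebalancing by raising the CZ threshold would re-introduce a power of $\|a\|_{SD_p^s(w)}$ through the telescoping.

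Where your write-up is soft is the description of the mechanism for Step~2: the corona stopping ``when $a$ or the normalized $w$-average roughly doubles'' is asserted, not carried out, and it is not obvious it produces the $c_n\,s\,\|a\|_{SD_p^s(w)}$ constant as stated. For the family built in Step~1 there is a shorter, self-contained route that does not require any corona decomposition and delivers precisely the claimed constant. Write $g:=\sum_{R\in\mathcal S}a(R)\chi_R$, so $g(x)=\sum_{m\ge 0}a(R_m(x))$ along the chain through $x$. For $p>1$, H\"older with weights $2^{m\theta}$ gives
\[
g(x)^p\le \Bigl(1-2^{-\theta/(p-1)}\Bigr)^{-(p-1)}\sum_{m\ge 0}2^{m\theta}a(R_m(x))^p .
\]
Each $\mathcal S^{(m)}$ is a disjoint family of dyadic subcubes of $Q_0$ with $|\bigcup\mathcal S^{(m)}|\le 2^{-(n+1)m}|Q_0|$, so a \emph{single} application of \eqref{eq:SDp} per generation (always with the reference cube $Q_0$, never iterated across scales) gives
\[
\sum_{R\in\mathcal S^{(m)}}a(R)^p w(R)\le \|a\|_{SD_p^s(w)}^p\,2^{-(n+1)mp/s}\,a(Q_0)^p w(Q_0).
\]
Choosing $\theta\asymp (n+1)p/s$ and summing the two geometric series yields $\|g\|_{L^p(Q_0,\,w\,dx/w(Q_0))}\le c_n\,s\,\|a\|_{SD_p^s(w)}\,a(Q_0)$ (the case $p=1$ is the same without the H\"older step), which combined with Step~1 is \eqref{eq:norm}. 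So the proposal is structurally correct and consistent with the paper, but the explicit route above is more transparent than the corona sketch you outline.
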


\subsection{Proofs of Theorems \ref{selfIMproveGoodConstant} and  \ref{selfIMproveBadConstant}}

The starting point for both results is the same. We look for the appropriate functional.  Indeed, 
from the key initial estimate  \eqref{FPI with gain} 
\[
\vint_Q|u(x)-u_Q|\, dx\leq c_n\,  %
(1-\delta)^{\frac1p}\, [u]_{W^{\delta,p}(Q)},
\]
which holds for every $Q\in \cQ$  and any $0<\delta<1$. Now, by definition of the $A_1$ class of weights,  we have
\[
 [u]_{W^{\delta,p}(Q)} \leq    [w]^{\frac{1}{p}}_{A_1}\, \ell(Q)^\delta\,
\left( \frac{1}{w(Q)} \int_{Q}  \int_{Q}  \frac{|u(x)- u(y)|^p}{|x-y|^{n+p\delta}}\,dy\,wdx\right)^{\frac1p},
\]
and hence,  %
\begin{equation}\label{InitHyp}
\frac{1}{|Q|}\int_Q |u(x)- u_{Q}|\,dx \leq  a_u (Q),  
\end{equation}
where 
\begin{equation}\label{defFunctional_2}
 a_u (Q):= \lambda\,\ell(Q)^{\delta}   \, \left( \frac{1}{w(Q)} \int_{Q}  \int_{Q}  g(x,y)\,dy\,wdx\right)^{\frac1p}
\end{equation}
and where $\lambda$ is the constant $\lambda=c_n \, %
(1-\delta)^{\frac1p}   [w]^{\frac{1}{p}}_{A_1}\,$ 
and $\displaystyle g(x,y)=  \frac{|u(x)- u(y)|^p}{|x-y|^{n+p\delta}}$.

We will need the following properties of the functional $a$ adapting   \cite[Lemma 5.2]{PR} or more precisely Lemma 6.2 in \cite{CMPR}. To simplify the presentation we recall that for $M \geq 1$ we define $p_M^*$ by 
\begin{equation}\label{eq:pMSobolev}
\frac{1}{p} -\frac{1}{ p_M^*}=\frac{\delta}{n}\frac1M.
\end{equation}
where $M$ will be chosen. When  $M=1$, $p_1^*$  equals the fractional Sobolev exponent  $p_1^*=p_{\delta}^*$ which corresponds to the unweighted known case $w=1$ in Theorem \ref{FracSobGain}.

\begin{lemma}\label{keyCond1} Let $w\in A_1$
 and let $a$ be defined as in \eqref{defFunctional_2}.\\
1)  Let $M > 1$ and let $p_M^*$  defined as above. %
Then, $a_u\in SD_{p^*_M}^{\frac{nM'}{\delta}}(w)$, namely, 
\begin{equation}%
\left( \sum_{i}a_u(Q_i)^{ p^*_{M} }\,\frac{w(Q_i)}{w(Q)}\right)^{\frac{1}{p^*_{M}}}  \leq  [w]_{A_1}^{ \frac{ \delta}{nM} }
\,\left (   \frac{|\bigcup_iQ_i|}{|Q|}      \right )^{ \frac{\delta}{nM'}  } a_u(Q). \\
\end{equation}
and hence \,$\|a_u\|_{SD_{p^*_M}^{ \frac{nM'}{\delta} }(w)} \leq [w]_{A_{1}}^{ \frac{ \delta}{nM} }.$\\
2)  Let $M = 1$. Then $a_u \in\,D_{p_\delta^*}(w)$, namely
\begin{equation}
\left( \sum_{i}a_u(Q_i)^{ p_{\delta}^* }\,\frac{w(Q_i)}{w(Q)}\right)^{\frac{1}{p_{\delta}^*}}  \leq  [w]_{A_1}^{ \frac{ \delta}{n} }
\, a_u(Q), \\
\end{equation}
and hence $\|a_u\|_{D_{p_\delta^*}(w)} \leq [w]_{A_1}^{ \frac{ \delta }{n} }$.  
\end{lemma}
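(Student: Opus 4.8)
\textbf{Plan of proof for Lemma \ref{keyCond1}.}

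The proof rests entirely on the $A_1$ hypothesis for $w$ and on elementary manipulations of the functional $a_u$ defined in \eqref{defFunctional_2}. The key observation is that $a_u(Q)$ factors as $\lambda\,\ell(Q)^\delta\,(w(Q))^{-1/p}\,\big(\mu(Q)\big)^{1/p}$, where $\mu$ is the (positive, finite-on-bounded-sets) measure $d\mu(x)=\big(\int_Q g(x,y)\,dy\big)\,w(x)\,dx$ restricted appropriately; more precisely, when summing over disjoint dyadic $\{Q_i\}\subset Q$, the inner integral over the ambient cube only shrinks, so $\sum_i \int_{Q_i}\int_{Q_i} g\,dy\,w\,dx \le \int_Q\int_Q g\,dy\,w\,dx$. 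This super-additivity is the crucial structural fact; everything else is bookkeeping with exponents.

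\emph{Step 1 (reduce to a summation estimate).} Raising $a_u(Q_i)$ to the power $p^*_M$ and inserting the factor $w(Q_i)/w(Q)$, one gets
\[
\sum_i a_u(Q_i)^{p^*_M}\frac{w(Q_i)}{w(Q)}
=\frac{\lambda^{p^*_M}}{w(Q)}\sum_i \ell(Q_i)^{\delta p^*_M}\,w(Q_i)^{1-\frac{p^*_M}{p}}\Big(\int_{Q_i}\!\int_{Q_i} g\,dy\,w\,dx\Big)^{\frac{p^*_M}{p}}.
\]
Since $p^*_M>p$, the exponent $1-\tfrac{p^*_M}{p}$ is negative, and here the $A_1$ condition enters: for a cube $R$, $w(R)\ge |R|\,\essinf_R w$ and $|R|^{-1}w(R)\le [w]_{A_1}\essinf_R w$, which gives the pointwise-type bound $w(R)\gtrsim [w]_{A_1}^{-1}|R|\,\tfrac{w(R)}{|R|}$ and, more usefully, lets us compare $w(R)$ with $|R|$ times an average. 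I will use the $A_1$ inequality in the form $|R|/w(R)\le [w]_{A_1}\,|R'|/w(R')$ for $R\subset R'$ (a standard consequence of $A_1$), applied with $R=Q_i$, $R'=Q$.

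\emph{Step 2 (apply H\"older and collect).} Write $\tfrac{p}{p^*_M}+\big(1-\tfrac{p}{p^*_M}\big)=1$ and apply H\"older's inequality to $\sum_i$ with these conjugate exponents, pairing $\big(\int_{Q_i}\int_{Q_i}g\,dy\,w\,dx\big)^{p^*_M/p}$ raised back down with the remaining factors. The "remaining factor" sum is $\sum_i \ell(Q_i)^{\delta p^*_M \cdot \frac{p}{p^*_M - p}} w(Q_i)^{\cdots}$; using $\ell(Q_i)^{\delta p^*_M} = \ell(Q_i)^{\delta p}\cdot\ell(Q_i)^{\delta(p^*_M-p)}$ together with the defining relation $\tfrac1p-\tfrac1{p^*_M}=\tfrac{\delta}{nM}$ (equivalently $\delta p^*_M = \tfrac{n M (p^*_M-p)}{p}\cdot\tfrac{p}{n}\cdot\ldots$; the bookkeeping is routine), the powers of $\ell(Q_i)=|Q_i|^{1/n}$ and of $w(Q_i)$ conspire so that the $A_1$ comparison from Step 1 turns $\sum_i |Q_i|^{(\text{power})}$ into $\big(\sum_i |Q_i|\big)^{(\text{power})}=|\bigcup_i Q_i|^{(\text{power})}$ up to the factor $[w]_{A_1}^{\delta/(nM)}$, while the factor $(\sum_i \int\int g)^{1/p}\le a_u(Q)/(\lambda\ell(Q)^\delta(w(Q))^{-1/p})$ from Step $0$ reconstitutes $a_u(Q)$ with the correct geometric gain $\big(|\bigcup_iQ_i|/|Q|\big)^{\delta/(nM')}$. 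Tracking the exponent on $[w]_{A_1}$ carefully gives exactly $\delta/(nM)$.

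\emph{Step 3 (the case $M=1$).} When $M=1$ we have $M'=\infty$, so $\big(|\bigcup_iQ_i|/|Q|\big)^{\delta/(nM')}=1$ and the $SD$ estimate degenerates to the plain $D_{p^*_\delta}(w)$ condition with constant $[w]_{A_1}^{\delta/n}$; this requires no new argument, just setting $M=1$ in Step 2 and noting the geometric factor disappears.

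\emph{Main obstacle.} The only delicate point is the exponent arithmetic in Step 2: one must verify that the powers of $|Q_i|$ appearing after H\"older's inequality are \emph{exactly} $1$ (so that superadditivity of Lebesgue measure, $\sum_i|Q_i| = |\bigcup_i Q_i|$, applies) and simultaneously that the leftover power of $\ell(Q)$ on the ambient cube matches $a_u(Q)$'s $\ell(Q)^\delta$ and the geometric factor is $(|\bigcup_iQ_i|/|Q|)^{\delta/(nM')}$. This is forced by the relation $\tfrac1p-\tfrac1{p^*_M}=\tfrac{\delta}{nM}$ and is the reason that precise definition was chosen; I expect it to go through cleanly but it is where all the care is needed. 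The structural input (superadditivity of the double integral and the $A_1$ comparison $|Q_i|/w(Q_i)\le [w]_{A_1}|Q|/w(Q)$) is completely standard.
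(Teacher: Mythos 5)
Your structural reading of the proof is correct: the whole estimate rests on (i) the superadditivity $\sum_i \int_{Q_i}\int_{Q_i} g\,dy\,w\,dx \le \int_Q\int_Q g\,dy\,w\,dx$, (ii) the $A_1$ comparison $|Q_i|/w(Q_i)\le [w]_{A_1}\,|Q|/w(Q)$, and (iii) the algebraic relation $\tfrac1p-\tfrac1{p^*_M}=\tfrac{\delta}{nM}$, which turns $w(Q_i)^{1-p^*_M/p}$ into $w(Q_i)^{-\delta p^*_M/(nM)}$. Steps 0, 1 and 3 of your plan match the paper.

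The problem is Step 2 and the ``Main obstacle'' paragraph. After inserting the $A_1$ comparison, the sum you need to control is
\[
S:=\sum_i \Bigl(\int_{Q_i}\int_{Q_i} g\,dy\,w\,dx\Bigr)^{p^*_M/p}\,|Q_i|^{\frac{\delta p^*_M}{nM'}},
\]
and you propose H\"older with the conjugate pair $q=p^*_M/p$, $q'=p^*_M/(p^*_M-p)$, claiming this will make the resulting $|Q_i|$ exponent ``exactly 1'' so you can sum it. That claim is false: with those exponents the power of $|Q_i|$ after H\"older is $\tfrac{\delta p^*_M}{nM'}\cdot q' = \tfrac{\delta (p^*_M)^2}{nM'(p^*_M-p)} = \tfrac{p^*_M(M-1)}{p}$ (using $p^*_M-p=\tfrac{\delta p\,p^*_M}{nM}$), which is not $1$ in general. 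No H\"older step is needed, and no power-$1$ constraint is needed either. The paper handles $S$ in two elementary moves: pull out $\sup_i |Q_i|^{\frac{\delta p^*_M}{nM'}}\le |\bigcup_i Q_i|^{\frac{\delta p^*_M}{nM'}}$ (valid for any positive exponent, not just $1$), and then use the $\ell^1\hookrightarrow\ell^{p^*_M/p}$ inequality $\sum_i c_i^{p^*_M/p}\le\bigl(\sum_i c_i\bigr)^{p^*_M/p}$ (valid since $p^*_M/p\ge 1$), after which superadditivity finishes. Your route could in fact be repaired by choosing H\"older with $q'=\tfrac{nM'}{\delta p^*_M}$ (this really does give exponent $1$ on $|Q_i|$, and $q'>1$ since $p^*_M<nM'/\delta$), but one then still needs the $\ell^1\hookrightarrow\ell^r$ power-mean step on the other factor, so H\"older buys you nothing and only obscures the exponent bookkeeping. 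For part (2), $M=1$, your observation that the geometric factor disappears is right, and the argument there is identical to the $M>1$ case without the $|Q_i|^{1/M'}$ split.
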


The small difference is that the functional   \eqref{defFunctional_2} is slightly different  from the usual one   
defined as   
\[
a(Q)=\ell(Q)^{\alpha}\left(\frac{1}{w(Q)}\mu(Q) \right)^{1/p}\,,
\]  
with  $\alpha, p>0$, that was considered  in 
\cite{PR}. In our case \,$\mu(Q)$\, is replaced by 
\,$\int_Q \int_Q g(x,y)dx\,dy$ with $g\geq0$. The inner integral is increasing   in $Q$, which is   enough and  the same proof can be applied here.

\begin{proof}

Recall that 
\begin{equation*}\label{defFunctional}
a_u(Q):= \lambda\,\ell(Q)^{\delta}   \, \left( \frac{1}{w(Q)} \int_{Q}  \int_{Q}  g(x,y)\,dy\,wdx\right)^{\frac1p}
\end{equation*}
and where $\lambda$ is a parameter,  $g$ is a function defined in \eqref{defFunctional_2} and  $w\in A_1$.  We will use  the following key geometric property, 
\begin{equation}\label{e.key}
\frac{|E|}{|Q|} \leq [w]_{A_1}  \frac{w(E)}{w(Q)},
\end{equation}
which is valid for any  measurable  subset   $E\subset Q$. It is   a   particular case of \eqref{two weight Ap charact} with $w=v$, $p=1$, $f=\chi_E$, although we will always restrict ourselves to subcubes rather than subsets of $Q$.

1) Case $M>1$. For simplicity in the exposition, we will omit the subindex $M$ and just use $p^*$ instead of $p^*_M$.   Then using the definition of $p^{*}$ and  the fact that $p^{*}>p$, we have  
\begin{eqnarray*}
\sum_{i}a_u(Q_i)^{p^*}w(Q_i) & = & \lambda^{p^*} \sum_{i} \left( \int_{Q_i}  \int_{Q_i}  g(x,y)\,dy\,wdx\right)^{\frac{p^*}{p}}    
\left (    \frac{\ell(Q_i)^{\delta}}{w(Q_i)^{\frac{1}{p} -\frac{1}{p^*}} } \right )^{ p^*  }          \\
& = & \lambda^{p^*} \sum_{i} \left( \int_{Q_i}  \int_{Q_i}  g(x,y)\,dy\,wdx\right)^{\frac{p^*}{p}}    
\left(    \frac{ |Q_i|^{\frac{\delta}{n}} }{w(Q_i)^{ \frac{\delta}{ nM}  } } \right )^{ p^*  } \\
& = & \lambda^{p^*} \sum_{i} \left( \int_{Q_i}  \int_{Q_i}  g(x,y)\,dy\,wdx\right)^{\frac{p^*}{p}}    \left( \frac{ |Q_i| }{w(Q_i)^{ \frac{1}{ M}  } } \right )^{ \frac{\delta p^*}{n} }  \\
&=&  \lambda^{p^*} \sum_{i} \left( \int_{Q_i}  \int_{Q_i}  g(x,y)\,dy\,wdx\right)^{\frac{p^*}{p}}    \left(  |Q_i|^{\frac{1}{ M'}}  \left(\frac{|Q_i|}{w(Q_i) } \right)^{\frac1M} \right )^{ \frac{\delta p^*}{n} }.
\end{eqnarray*}
 By applying \eqref{e.key}, we can continue to estimate 
\begin{eqnarray*}
\sum_{i}a_u(Q_i)^{p^*}w(Q_i)& \leq  & \lambda^{p^*} [w]_{A_1}^{\frac{\delta p^*}{nM} } \left( \frac{|Q|}{w(Q) }   \right)  ^{ \frac{\delta p^*}{nM}   }  \  
\sum_{i} \left( \int_{Q_i}  \int_{Q_i}  g(x,y)\,dy\,wdx\right)^{\frac{p^*}{p}} \,    |Q_i|^{ \frac{\delta p^*}{nM'} }\\
&\leq& \lambda^{p^*}
  [w]_{A_1}^{\frac{\delta  p^*}{nM} }
\left( \frac{|Q|}{w(Q) }   \right)  ^{ \frac{\delta p^*}{nM}   }  \  
\sup_i |Q_i|^{ \frac{ \delta p^* }{ n M'}   }   \sum_{i} \left( \int_{Q_i}  \int_{Q_i}  g(x,y)\,dy\,wdx\right)^{\frac{p^*}{p}}     \\
&\leq& \lambda^{p^*}
[w]_{A_1}^{\frac{\delta  p^*}{nM} }
\left( \frac{|Q|}{w(Q) }   \right)  ^{ \frac{\delta p^*}{nM}   }  \  
 |\bigcup_iQ_i|^{ \frac{ \delta p^* }{ n M'}   }   \left(\sum_{i}  \int_{Q_i}  \int_{Q_i}  g(x,y)\,dy\,wdx\right)^{\frac{p^*}{p}}     \\
&\leq& \lambda^{p^*}
[w]_{A_1}^{\frac{\delta  p^*}{nM} }
\left( \frac{|Q|}{w(Q) }   \right)  ^{ \frac{\delta p^*}{nM}   }  \  
 |\bigcup_iQ_i|^{ \frac{ \delta p^* }{ n M'}   }   \left( \int_{Q}  \int_{Q}  g(x,y)\,dy\,wdx\right)^{\frac{p^*}{p}}     \\
& = & [w]_{A_1}^{\frac{\delta  p^*}{nM} }\,   \left (   \frac{|\bigcup_iQ_i|}{|Q|}      \right )^{ \frac{ \delta p^* }{ n M'}   }\, a_u(Q)^{ p^* } w(Q).
\end{eqnarray*}
This proves part 1) of the lemma. 

The second part corresponds to the case $M=1$, which corresponds  formally to the case $M'=\infty$ before and hence without "smallness", namely without the factor\, $\displaystyle  \frac{|\bigcup_iQ_i|}{|Q|}$. 

\begin{eqnarray*}
\sum_{i}a_u(Q_i)^{p^*}w(Q_i) & = & \lambda^{p^*} \sum_{i} \left( \int_{Q_i}  \int_{Q_i}  g(x,y)\,dy\,wdx\right)^{\frac{p^*}{p}}    
\left (    \frac{\ell(Q_i)^{\delta}}{w(Q_i)^{\frac{1}{p} -\frac{1}{p^*}} } \right )^{ p^*  }          \\
& = & \lambda^{p^*} \sum_{i} \left( \int_{Q_i}  \int_{Q_i}  g(x,y)\,dy\,wdx\right)^{\frac{p^*}{p}}    
\left(    \frac{ |Q_i|^{\frac{\delta}{n}} }{w(Q_i)^{ \frac{\delta}{ n}  } } \right )^{ p^*  } \\
& = & \lambda^{p^*} \sum_{i} \left( \int_{Q_i}  \int_{Q_i}  g(x,y)\,dy\,wdx\right)^{\frac{p^*}{p}}    \left( \frac{ |Q_i| }{w(Q_i) } \right )^{ \frac{\delta p^*}{n} }  \\
&=&  \lambda^{p^*} \sum_{i} \left( \int_{Q_i}  \int_{Q_i}  g(x,y)\,dy\,wdx\right)^{\frac{p^*}{p}}    
\left(    \frac{|Q_i|}{w(Q_i) }  \right )^{ \frac{\delta p^*}{n} } \\
& \leq  & \lambda^{p^*} [w]_{A_1}^{\frac{\delta p^*}{n} } \left( \frac{|Q|}{w(Q) }   \right)  ^{ \frac{\delta p^*}{n}   }  \  
\sum_{i} \left( \int_{Q_i}  \int_{Q_i}  g(x,y)\,dy\,wdx\right)^{\frac{p^*}{p}}     \\
&\leq& \lambda^{p^*}
[w]_{A_1}^{\frac{\delta  p^*}{n} }
\left( \frac{|Q|}{w(Q) }   \right)  ^{ \frac{\delta p^*}{n}   }\,   
   \left(\sum_{i}  \int_{Q_i}  \int_{Q_i}  g(x,y)\,dy\,wdx\right)^{\frac{p^*}{p}}     \\
&\leq& \lambda^{p^*}
[w]_{A_1}^{\frac{\delta  p^*}{n} }
\left( \frac{|Q|}{w(Q) }   \right)  ^{ \frac{\delta p^*}{n}   }  \,  
  \left( \int_{Q}  \int_{Q}  g(x,y)\,dy\,wdx\right)^{\frac{p^*}{p}}     \\
& = & [w]_{A_1}^{\frac{\delta  p^*}{n} }\,  a_u(Q)^{ p^* } w(Q).
\end{eqnarray*}
\end{proof}

\begin{proof}[Proof of Theorem \ref{selfIMproveGoodConstant}]

Let $1\leq p< \tfrac{n}{\delta}$ and recall that  $p_{\delta,w}^*$ is defined by the relationship
\begin{equation}\label{eq:ptimes-A1}
\frac{1}{p} -\frac{1}{p_{\delta,w}^*}=   \frac{\delta}{n} \, \frac{1}{1+\log [w]_{A_1}}.
\end{equation} 

Fix a cube $Q$ in $\mathbb{R}^n$.  The goal is to prove the strong type $(p_{\delta,w}^*,p)$ Poincar\'e--Sobolev estimate  with $A_1$ weights:
\begin{equation}\label{FractPSBonusA1-V1}
\begin{aligned}
& \| u-u_Q\|_{L^{p_{\delta,w}^*} ( Q, \frac{wdx}{w(Q)})}\\
&\leq c_n %
\,(1-\delta)^{\frac1p} \, [w]^{1+\frac{1}{p}}_{A_1}\,\ell(Q)^{\delta}
\left(  \frac{1}{w(Q)}  \int_{Q}  \int_{Q}  \frac{|u(x)- u(y)|^p}{|x-y|^{n+p\delta}}\,dy\,wdx\right)^{\frac1p}.   
\end{aligned}
\end{equation}

We will distinguish two cases: $[w]_{A_1} > e^{\frac{1}{\delta}}$ or the contrary. 

Recall that we start with estimate \eqref{InitHyp} using the functional \eqref{defFunctional_2} which by Lemma \ref{keyCond1} part 1)  satisfies an $SD_{p^*_{M} }^{\frac{nM'}{\delta}}$-condition  from Definition \ref{def:smallness} for appropriate $M>1$  with $s=\frac{nM'}{\delta} > 1$, namely 
\begin{equation}\label{ineq:||a||}
\|a_u\|:=\|a_u\|_{SD_{p^*_M}^{ \frac{nM'}{\delta} }(w)} \leq [w]_{A_1}^{ \frac{ \delta}{nM} }.
\end{equation}
If we choose  $M= 1+ \log[w]_{A_1},$
the exponent $p^*_M$ is exactly the value $p_{\delta,w}^*$ from \eqref{eq:ptimes-A1}. 
Hence applying Theorem  \ref{thm:PR-strong}   with $p$ replaced by  $p_{\delta,w}^*$   and using estimate \eqref{ineq:||a||} we obtain  
\begin{eqnarray*}
\left( \frac{1}{ w(Q)  } \int_{ Q }   |u -u_{Q}|^{p_{\delta,w}^*}    \,wdx\right)^{\frac{1}{p_{\delta,w}^*}}  & \leq  & c_n\,s \|a_u\|\, a_u(Q) \\
& \le & c_n \frac{M'}{\delta} [w]_{A_1}^{\frac{\delta}{nM}} \, a_u(Q)\\
& \le & c_n\frac{M'}{\delta}  [w]_{A_1}^{\frac{1}{ 1+\log [w]_{A_1}  }} \, a_u(Q)\\
& \leq & c_n \frac{1+ \log[w]_{A_1}}{ \log[w]_{A_1}}\frac{1}{\delta}\, a_u(Q).
\end{eqnarray*}

Hence, if we assume first that $[w]_{A_1} > e^{\frac{1}{\delta}}$, we have 
\begin{eqnarray*}
\left( \frac{1}{ w(Q)  } \int_{ Q }   |u -u_{Q}|^{p_{\delta,w}^*}    \,wdx\right)^{\frac{1}{p_{\delta,w}^*}} 
& \leq  & 
 c_n (1+ \log[w]_{A_1}) \, a_u(Q)\\
& \leq  & c_n [w]_{A_1} \, a_u(Q).
\end{eqnarray*}
This gives \eqref{FractPSBonusA1-V1} when $[w]_{A_1} > e^{\frac{1}{\delta}}$. Assume now that $[w]_{A_1} \leq  e^{\frac{1}{\delta}}$.  Note that in this case we do not know how to prove the result we are looking for using the argument we just used directly from the strong norm. To overcome this difficulty we will use the truncation method.    That is, by Theorem~\ref{t.truncation},  it is enough to prove the weak norm version of \eqref{FractPSBonusA1-V1}, namely
\begin{equation}\label{claimFractWeak}
\norm{ u-u_Q}_{L^{p_{\delta,w}^*,\infty} ( Q, \frac{w}{w(Q)})}\leq c\,a_u(Q)\,.
\end{equation}

We observe first that for any family $\{Q_i\}_i$ of pairwise disjoint dyadic subcubes of $Q$, the following inequality follows 
from Lemma \ref{keyCond1} part 2):
\begin{equation*}%
\left(\sum_{i}a(Q_i)^{ p^*_{1} }\,\frac{w(Q_i)}{w(Q)} \right)^{\frac{1}{p^*_{1}} } \leq  e^{\frac1n}\, a_u(Q), 
\end{equation*}
since $[w]_{A_1} \leq  e^{\frac{1}{\delta}}$.  Recall from  \eqref{eq:pMSobolev} that the exponent $p_1^*$ is defined by
$\displaystyle \frac{1}{p} -\frac{1}{ p_1^*}=\frac{\delta}{n}$.

Now, applying Theorem \ref{thm:Automejoraweak}, we get
\begin{eqnarray*}
\| u-u_Q\|_{L^{p^*_{1},\infty}\big( Q, \frac{w\,dx}{w(Q)}\big)} &\leq&  c\, p^*_{1}\, [w]_{A_{\infty}}\, e^{\frac1n}\,a_u(Q).\\
&\leq & c_n\, p^*_{1}\, [w]_{A_{1}}\,a_u(Q).
\end{eqnarray*}
Consider here the same choice as before of $M=1+\log [w]_{A_1}$.  Since  $p^*_{1}>p^*_M=p_{\delta,w}^*$ (just note that $p^*_M$ is a decreasing function on $M$), by Jensen's inequality which holds 
at weak level (simply use that the inner part of what is inside   the power $\frac1p$   is less than or equal to one) we have,

\begin{eqnarray*}
\| u-u_Q\|_{L^{p_{\delta,w}^*,\infty}\big( Q, \frac{w\,dx}{w(Q)}\big)} & \leq  & \| u-u_Q\|_{L^{p^*_{1},\infty}\big( Q, \frac{w\,dx}{w(Q)}\big)}\\
&\le &
c_n\, p^*_{1} \, [w]_{A_{1}}\,a_u(Q).  
\end{eqnarray*}
This gives the claim \eqref{claimFractWeak} finishing the proof of the theorem. 
\end{proof}

We now proceed with the proof of Theorem \ref{selfIMproveBadConstant}, in which we allow ourselves to loose some sharpness in the constant regarding the $A_1$ constant of the weight involved to be able to get 
  the usual    
fractional Sobolev exponent $p_\delta^*$.

\begin{proof}[Proof of Theorem \ref{selfIMproveBadConstant}]

We have to prove 

\begin{equation*}
\begin{aligned}
& \left (\frac{1}{w(Q)} \int_Q |u-u_Q|^{p_{\delta}^*} w dx\right )^{\frac{1}{p_{\delta}^*}} \\
&\leq 
c_n\,p_{\delta}^*\, %
(1-\delta)^{\frac1p} \,
 [w]_{A_1}^{   \frac{\delta}{n} +1+\frac1p}  
\ell(Q)^{\delta} \, 
\left(\frac{1}{w(Q)}\int_{Q}  \int_{Q}  \frac{|u(x)- u(y)|^p}{|x-y|^{n+p\delta}}\,dy\,w(x)dx\right)^{\frac1p},  
\end{aligned}
\end{equation*}
where $p_\delta^*$ is the (unweighted) fractional  Sobolev exponent\,  $\frac{1}{p} -\frac{1}{ p_{\delta}^* }=\frac{\delta}{n}.$

To do this, we start as in the proof of Theorem \ref{selfIMproveGoodConstant}, namely  we begin with  \eqref{InitHyp}, namely 
\begin{equation*}
\frac{1}{|Q|}\int_Q |u(x)- u_{Q}|\,dx \leq a_u (Q),
\end{equation*}
where $ a_u $ is the functional
\begin{equation*}%
a_u(Q):= \lambda\,\ell(Q)^{\delta}   \, \left( \frac{1}{w(Q)} \int_{Q}  \int_{Q}  g(x,y)\,dy\,wdx\right)^{\frac1p},
\end{equation*}
where $\lambda=c_n  %
(1-\delta)^{\frac1p}
[w]^{\frac{1}{p}}_{A_1}\,$ and 
$\displaystyle g(x,y)=  \frac{|u(x)- u(y)|^p}{|x-y|^{n+p\delta}}$.

Also, as in the proof of Theorem  \ref{selfIMproveGoodConstant} we can use part 2) of Lemma \ref{keyCond1},  namely
if  $\{Q_i\}$ is a family of pairwise disjoint subcubes of $Q$, the following inequality holds
\begin{equation}
\left(\sum_{i} a_u(Q_i)^{ p_\delta^* }\,\frac{w(Q_i)}{w(Q)} \right)^{\frac{1}{ p_\delta^* }} \leq  [w]_{A_1}^{ \frac{ \delta}{n} }\, a_u(Q)
\end{equation}
uniformly in $u$. That is, the functional $a$ satisfies the  $D_{p_\delta^*}(w)$  condition  and further we have that $\| a_u\|_{D_{p_\delta^*}(w)} \leq [w]_{A_1}^{ \frac{ \delta }{n} }$.  Hence, by  applying Theorem \ref{thm:Automejoraweak} we have
\begin{eqnarray*}
\| u-u_Q\|_{L^{p_\delta^*,\infty} ( Q, \frac{w}{w(Q)})} &\leq& c_{n} \,p_\delta^*\, [w]_{A_1}^{ \frac{\delta}{n}+1}\, a_u(Q)\\
&=&   c_{n} \,p_\delta^*\,   %
(1-\delta)^{\frac1p}   [w]^{\frac{\delta}{n}+1+\frac1p}_{A_1}\,\,\ell(Q)^{\delta}\, \left( 
\frac{1}{w(Q)} \int_{Q}  \int_{Q}  \frac{|u(x)- u(y)|^p}{|x-y|^{n+p\delta}} \,dy\,wdx \right)^{\frac1p}.\,
\end{eqnarray*}
 An application  of the  truncation method finishes the proof, see Theorem \ref{t.truncation}.
\end{proof}

\subsection{Proof of Theorem \ref{t.hardy-MS}}

We begin this section with the last of our applications of the self-improving methods. 

\begin{proof}[Proof of Theorem \ref{t.hardy-MS}] 

Our starting point is similar as in  Theorems \ref{selfIMproveGoodConstant} and  \ref{selfIMproveBadConstant}, 
\begin{equation*}
\frac{1}{|Q|}\int_Q |u(x)- u_{Q}|\,dx \leq c_n\,%
(1-\delta)^{\frac1p}  \, \ell(Q)^{\delta}\,
\left( \frac{1}{|Q|} \int_{Q}  \int_{Q}  \frac{|u(x)- u(y)|^p}{|x-y|^{n+p\delta}}\,dy\,dx\right)^{\frac1p}.
\end{equation*}
Then, we bound the integral term in the right-hand side as follows, 
\begin{equation*}
\begin{aligned}
\ell(Q)^{\delta}\,&
\left( \frac{1}{|Q|} \int_{Q}  \int_{Q}  \frac{|u(x)- u(y)|^p}{|x-y|^{n+p\delta}}\,dy\,dx\right)^{\frac1p}\\
&=\ell(Q)^{\delta}\, \left( \frac{1}{|Q|} \int_{Q}  \int_{Q}  \frac{|u(x)- u(y)|^p}{|x-y|^{n+p\delta}}\,dy\,\frac{M_{p\delta,Q}(w)(x)}{M_{p\delta,Q}(w)(x)} dx\right)^{\frac1p} \\
&\leq \left( \frac{1}{w(Q)} \int_{Q}  \int_{Q}  \frac{|u(x)- u(y)|^p}{|x-y|^{n+p\delta}}\,dy\,M_{p\delta,Q}(w)  (x)dx\right)^{\frac1p}
\end{aligned}
\end{equation*}
by definition of $M_{p\delta,Q}$. Define now the new functional 
\begin{equation}\label{NewFunctional}
a_{u,w}(Q):= c_n  %
(1-\delta)^{\frac1p}   \, \left( \frac{1}{w(Q)} \int_{Q}  \int_{Q}  g(x,y)\,dy\,M_{p\delta,Q}(w)(x)dx\right)^{\frac1p},
\end{equation}
where $g(x,y)=  \frac{|u(x)- u(y)|^p}{|x-y|^{n+p\delta}}$.

Observe that the functional $a$ satisfies readily 
the $D_{p}(w)$ condition from  Definition \ref{def:smallness}   part 2) uniformly in $u$. Furthermore,    $\| a_{u,w}\|_{D_p(w)}\leq 1$.  Then as in the proofs of Theorems \ref{selfIMproveGoodConstant} and  \ref{selfIMproveBadConstant},  we use  Theorem \ref{thm:Automejoraweak} since we are assuming $w\in A_{\infty}$:
\begin{eqnarray*}
\| u-u_Q\|_{L^{p,\infty} ( Q, \frac{w}{w(Q)})} \leq c_n \, p\, [w]_{A_\infty}\,\| a_{u,w} \|_{D_p(w)} \, a_{u,w}(Q)
\leq c_n\,p\, [w]_{A_\infty}\, a_{u,w}(Q).
\end{eqnarray*}
Then  we can pass  to the strong norm  by using the truncation method from Theorem \ref{t.truncation}  to get
\begin{eqnarray*}
\inf_{c\in \mathbb{R}}\left( \frac{1}{ w(Q)  } \int_{ Q }   |u - c |^{p}    \,wdx\right)^{\frac{1}{p}} \leq  c_n\,   p\, [w]_{A_\infty}\, a_{u,w}(Q).
\end{eqnarray*}
This concludes the proof of Theorem \ref{t.hardy-MS}.
\end{proof} 

 We cannot  get an $L^{p_\delta^*}$ version as in the other theorems because   of    the lack of the fractional part $\ell(Q)^{\varepsilon}$ in the definition of the functional \eqref{NewFunctional} and hence  we cannot prove  a ``smallness" type  condition from Definition \ref{def:smallness} part 1).

As a consequence of Theorem \ref{t.hardy-MS} we obtain Corollary  \ref{globalversion1}, which is 
a global variant.

\begin{proof}[Proof of Corollary  \ref{globalversion1}]

By assumptions $u\in L^1_{\loc}(\R^n)\cap L^1_{\loc}(\R^n;wdx)$ and there exists an increasing sequence of cubes $\{Q_j\}_{j\in \mathbb{N}}$ such that $\mathbb{R}^n=\bigcup_{j\in\mathbb{N}} Q_j$ and
\[
\lim_{j\to\infty} \frac{1}{w(Q_j)}\int_{Q_j}u(x)w(x)dx=0.
\]
Write $c_j=\frac{1}{w(Q_j)}\int_{Q_j}u(x)w(x)dx$ for
all $j\in\N$.
 By \eqref{LqOscProp},
\[
\int_{Q_j} \lvert u(x)-c_j\rvert^p\,w(x)dx\le
2^p \inf_{c\in\R}\int_{Q_j} \lvert u(x)-c\rvert^p\,w(x)dx.
\]
 Now, by Fatou's lemma and Theorem \ref{t.hardy-MS} we get  
\[
\begin{split}
\int_{\mathbb{R}^n}|u(x)|^pw(x)dx&= \int_{\mathbb{R}^n} \lim_{j\to \infty}|u(x)- c_j|^p\chi_{Q_j}(x)w(x)dx\\
&\leq  2^p\liminf_{j\to\infty}  \inf_{c\in\R}\int_{Q_j}|u(x)-c|^pw(x)dx\\
&\leq c^p_n\,p^p \, %
(1-\delta)\,[w]_{A_\infty}^p\,\liminf_{j\to\infty} \int_{Q_j}\int_{Q_j}\frac{|u(x)-u(y)|^p}{|x-y|^{n+\delta p}}dyM_{\delta p,Q_j}( w)(x)dx\\
&\leq c_n^p \,p^p  \,%
\,(1-\delta)\, [w]_{A_\infty}^p\,\int_{\mathbb{R}^n}\int_{\mathbb{R}^n}\frac{|u(x)-u(y)|^p}{|x-y|^{n+\delta p}}dyM_{\delta p}( w)(x)dx.
\end{split}
\]
\end{proof}

\begin{proof}[Proof of Corollaries  \ref{ThmSpecialcase} and \ref{ThmSpecialcaseg}]

We recall first the second part of Lemma  \ref{PropertiesFractMax.}, for  $0\leq \alpha<n$ 
and let $0< \varepsilon\leq\frac{n}{n-\alpha}$, then $(M_{\alpha}\mu)^{\frac{n}{n-\alpha} -\varepsilon} \in A_1
$  if $M_{\alpha}\mu$ is finite almost everywhere.     The $A_1$ constant does not depend on $\mu$, more precisely,   
\begin{equation*}%
[(M_{\alpha}\mu)^{\frac{n}{n-\alpha} -\varepsilon}]_{A_1} \leq 
 2^{  \frac{n}{n-\alpha} - \varepsilon} \frac{2n}{n-\alpha}\frac{15^{n-\varepsilon(n-\alpha)}}{\varepsilon}.
\end{equation*}
Denote the weight on the left of \eqref{SpecialFractPI1} by $w(x):=(M_{n-\frac{p\delta}{\beta}}\mu (x))^\beta$,   which is finite almost everywhere.  Let $\alpha=n-\frac{p\delta}{\beta}$ and $\varepsilon:=\beta(\frac{n}{p\delta}-1)$. Since we assume that $p<\frac{n}{\delta}\min\{1,\beta\}$, $0<\alpha<n$ and $\varepsilon>0.$ Then observe that with this definition, 
\[
w(x)= (M_{\alpha}\mu(x))^{\frac{n}{n-\alpha} -\varepsilon}
\]
and hence $w\in A_1$ with constant
\[
[w]_{A_1} \leq 
 2^{  \frac{n}{n-\alpha} - \varepsilon} \frac{2n\beta}{p\delta} \frac{15^{n-\varepsilon(n-\alpha)}}{\varepsilon}
 \leq 2^{\beta} \frac{2n\beta}{p\delta}\frac{15^{n }}{\varepsilon} =  \frac{2^{\beta}}{n-p\delta}\,2n15^n.
\]

We apply now Theorem \ref{t.hardy-MS} to our weight $w$, since $A_{1}\subset A_{\infty}$ with $[w]_{A_{\infty}}\leq [w]_{A_1}$, for a cube $Q$
\begin{equation}\label{finalestim}
\begin{split}
 \inf_{c\in\R} \left(\int_{Q} \lvert u(x)-c\rvert^p\,wdx\right)^{\frac1p}
&\leq c_{n}\,p\,   \, %
(1-\delta)^{\frac1p}\, [w]_{A_1}\,
\left(\int_{Q} \int_{Q} \frac{\lvert u(x)-u(y)\rvert^p}{\lvert x-y\rvert^{n+\delta p}}\,dy\, M_{\delta p,  {Q}}(w)dx\right)^{\frac{1}{p}}\\
&\leq c_{n}\,p\,   \,%
(1-\delta)^{\frac1p}\, \frac{2^{\beta}}{n-p\delta}\,
\left(\int_{Q} \int_{Q} \frac{\lvert u(x)-u(y)\rvert^p}{\lvert x-y\rvert^{n+\delta p}}\,dy\, M_{\delta p}(w)dx\right)^{\frac{1}{p}}.
\end{split}
\end{equation}

 To bound $M_{\delta p}(w)$  we apply first part 3) and then part 1) of Lemma \ref{PropertiesFractMax.}. Indeed,  using the same notation as before  with  $\alpha:=n-\frac{p\delta}{\beta}$  and recalling that
$p\delta<n$, we get 
\begin{equation*}
\begin{aligned}
M_{\delta p}\left[(M_{\alpha}\mu)^\beta\right](x)&\leq \left(\frac{n}{\delta p}\right)'\|(M_{\alpha}\mu)^\beta\|_{L^{\frac{n}{\delta p},\infty}(\mathbb{R}^n)}\\
&=\frac{n}{n-\delta p}\|M_{\alpha}\mu \|_{L^{\frac{n\beta}{\delta p},\infty}(\mathbb{R}^n)}^\beta 
=\frac{n}{n-\delta p}\|M_{\alpha}\mu \|_{L^{\frac{n}{n-\alpha},\infty}(\mathbb{R}^n)}^\beta \\
&\leq \frac{n}{n-\delta p} 5^{(n-\alpha)\beta }\mu(\mathbb{R}^n)^\beta
=\frac{ n  5^{\delta p} }{n-\delta p}\mu(\mathbb{R}^n)^\beta\\ 
&\leq \frac{ n  5^n }{n-\delta p}\mu(\mathbb{R}^n)^\beta.  
\end{aligned}
\end{equation*}
Inserting this in \eqref{finalestim} will give Corollary \ref{ThmSpecialcase}.

Corollary \ref{ThmSpecialcaseg} follows from Corollary \ref{ThmSpecialcase} after an application of Fatou's lemma. 
\end{proof}

\subsection{Proof of Theorem \ref{A1-two weight}}
We begin as before using  \eqref{FPI with gain} as the starting point, 
\begin{equation*}
\frac{1}{|Q|}\int_Q |u(x)- u_{Q}|\,dx \leq c_n\,%
(1-\delta)^{\frac1p}  \, \ell(Q)^{\delta}\,
\left( \frac{1}{|Q|} \int_{Q}  \int_{Q}  \frac{|u(x)- u(y)|^p}{|x-y|^{n+p\delta}}\,dy\,dx\right)^{\frac1p},
\end{equation*}
which holds for every cube $Q$ and any $0<\delta<1$.

a) We bound  the integral term in the right-hand side as follows. Recall that $0<\varepsilon \leq  \delta$
\begin{equation*}
\begin{aligned}
\ell(Q)^{\delta}\,&
\left( \frac{1}{|Q|} \int_{Q}  \int_{Q}  \frac{|u(x)- u(y)|^p}{|x-y|^{n+p\delta}}\,dy\,dx\right)^{\frac1p}\\
&=\ell(Q)^{\delta}\, \left( \frac{1}{|Q|} \int_{Q}  \int_{Q}  \frac{|u(x)- u(y)|^p}{|x-y|^{n+p\delta}}\,dy\, \frac{M_{(\delta-\varepsilon)p,  {Q}}(w {\chi_Q})(x)}{M_{(\delta-\varepsilon)p,  {Q}}(w {\chi_Q})(x)}dx\right)^{\frac1p} \\
&\leq \ell(Q)^{\varepsilon}\,\left( \frac{1}{w(Q)} \int_{Q}  \int_{Q}  \frac{|u(x)- u(y)|^p}{|x-y|^{n+p\delta}}\,dy\,M_{(\delta-\varepsilon)p,  {Q}}(w {\chi_Q})(x)dx\right)^{\frac1p}
\end{aligned}
\end{equation*}
and define now the new functional 
\begin{equation}%
a_{w,u}(Q):=  \lambda_{\delta}    \,\ell(Q)^{\varepsilon}\, \left( \frac{1}{w(Q)} \int_{Q}  \int_{Q}  g(x,y)\,dy\,A(w)(x)dx\right)^{\frac1p},
\end{equation}
where $g(x,y):=  \frac{|u(x)- u(y)|^p}{|x-y|^{n+p\delta}}$, $A(w):=M_{(\delta-\varepsilon)p,  {Q}}(w {\chi_Q})$
and $\lambda_{\delta}= c_n %
(1-\delta)^{\frac1p}$. 

The key point is that this functional satisfies  the property in  Definition \ref{def:smallness} part 1),  namely 
\begin{equation}%
\left( \sum_{i}a_{w,u}(Q_i)^{ p }\,\frac{w(Q_i)}{w(Q)}\right)^{\frac{1}{p}}  \leq  
\,\left (   \frac{|\bigcup_iQ_i|}{|Q|}      \right )^{ \frac{\varepsilon}{n}  } a_{w,u}(Q) \\
\end{equation}
and hence  \,$\| a_{w,u}\|_{SD_{p}^{ \frac{n}{\varepsilon} }(w)} \leq 1.$ This property is very easy to verify. Indeed, let $\{Q_i\}$  be a disjoint family of dyadic cubes in $Q$.  Then   
\begin{eqnarray*}
\sum_{i}a_{w,u}(Q_i)^{ p }\,\frac{w(Q_i)}{w(Q)} 
& = & \sum_{i}\lambda^p_{\delta} \,\ell(Q_i)^{p\varepsilon} \frac{1}{w(Q_i)} \int_{Q_i}  \int_{Q_i}  g(x,y)\,dy\,A(w)(x)dx \,\frac{w(Q_i)}{w(Q)}
\\
& = & \,\frac{\lambda^p_{\delta}}{w(Q)}\, \sum_{i}  \,|Q_i|^{\frac{p\varepsilon}{n}} \int_{Q_i}  \int_{Q_i}  g(x,y)\,dy\,A(w)(x)dx  \\
&\leq& \, \frac{\lambda^p_{\delta}}{w(Q)}\, \left( \Big|\bigcup_iQ_i \Big|\right )^{\frac{p\varepsilon}{n}}
   \int_{Q}  \int_{Q}  g(x,y)\,dy\,A(w)(x)dx\\ 
& = & \left (   \frac{|\bigcup_iQ_i|}{|Q|}      \right )^{ \frac{p\varepsilon}{n}  }\, a_{w,u}(Q)^{ p }.
\end{eqnarray*}

Then, by Theorem \ref{thm:PR-strong} with  \,$\| a_{w,u}\|_{SD_{p}^{ \frac{n}{\varepsilon} }(w)} \leq 1$ and $s=\frac{n}{\varepsilon}$,
\begin{equation}
\|u-u_Q\|_{ L^{p}(Q,\frac{wdx}{w(Q)}) } \leq  \frac{c_n}{\varepsilon} \,  a_{w,u}(Q).
\end{equation}
This yields  \eqref{A1-two weight1}.

b) To finish the proof of the theorem we prove now  \eqref{FS-type}. Indeed, by part a) with $\frac12<\varepsilon=\delta<1$, there is a positive dimensional constant $c_n$ such that for any weight $w$, 
  \[
\begin{aligned}
\Bigg{(}\int_{Q} &\lvert u(x)-u_{Q}\rvert^p \,wdx\Bigg{)}^{1/p}\\
&\leq c_n\, \frac{ %
(1-\delta)^{\frac{1}{p}}}{ \delta } \ell(Q)^{\delta } \left(\int_{Q} \int_{Q} \frac{\lvert u(x)-u(y)\rvert^p}{\lvert x-y\rvert^{n+\delta p}}\,dy\, M(w {\chi_Q})(x)\,dx\right)^{1/p}.
\end{aligned}
\]
This  gives \eqref{FS-type} of the theorem,  since $\delta>\tfrac12$.  For \eqref{A1-two weight2} we recall that from Definition \ref{TwoWeights},  $M(w)(x)\leq [w,v]_{A_1} v(x)$ and hence we continue with, 
\[
\begin{aligned}
 \Bigg{(}\int_{Q} \lvert u(x)-u_{Q}\rvert^p \,wdx\Bigg{)}^{1/p}&\leq  c_n\,(1-\delta)^{\frac{1}{p}} \ell(Q)^{\delta }
\left(\int_{Q} \int_{Q} \frac{\lvert u(x)-u(y)\rvert^p}{\lvert x-y\rvert^{n+\delta p}}\,dy\, M(w)(x)\,dx\right)^{1/p}\\
&\leq  c_n\, (1-\delta)^{\frac{1}{p}} \ell(Q)^{\delta} [w,v]_{A_1}^{\frac1p}
\left(\int_{Q} \int_{Q} \frac{\lvert u(x)-u(y)\rvert^p}{\lvert x-y\rvert^{n+\delta p}}\,dy\, v(x)\,dx\right)^{1/p}.
\end{aligned}
\]
This concludes the proof of part b) and hence the proof of  Theorem \ref{A1-two weight}.
\qed

 \section{Weighted fractional  isoperimetric estimates and representation formula} 
In this section we will discuss the proofs of the weighted Hardy and Poincar\'e--Sobolev type inequalities stated in Section  \ref{subsec.weighted.Isop.Ineq} using this time representation formulas instead of the general self-improving type arguments in Section \ref{self-improvingmethods}.  At the end we will derive some global inequalities as a consequence of these results.

First we will present some local representation formulas which follow from general Poincar\'e type inequalities. Then we will introduce some consequences of these representation formulas.

 The following lemma is a key  in our arguments. It essentially follows from 
  \cite{FLW}, but it can also be obtained by following the proof of \cite[Lemma 4.10]{HV1}, since cubes are examples of John domains. We are interested in the tracking of the constants involved in our estimates and so   we will provide the proof here for the sake of clarity.

\begin{lemma}\label{l.riesz_poinc}
Let $Q_0$ be a cube in $\R^n$.
Assume that  $0<\alpha< n$ and  consider $0<\eta <n-\alpha$ and $1\leq r<\infty $. Let 
$u\in L^1(Q_0)$ and let $g$ be a non-negative measurable function on $Q_0$
such that for a finite constant $\kappa$,
\begin{equation}\label{kappaAssump0}
\vint_{Q}\lvert u(x)-u_{Q}\rvert\,dx
 \leq  {\kappa} \,\ell(Q)^{\alpha}   \left(\vint_Q g(x)^r\,dx\right)^{\frac1r}
\end{equation}
for every cube $Q\subset Q_0$. Then there exists a dimensional constant $c_n$
such that
\begin{equation*}
\lvert u(x)-u_{Q_0}\rvert \leq 
c_n\,  \frac{\kappa }{\alpha^{1/r'} \eta^{1/r}}
\,\ell(Q_0)^{\alpha/r'}
\left(I_\alpha(g^r\chi_{Q_0})(x)\right)^{^{\frac1r}} \\
\end{equation*}
for almost every $x\in Q_0$.  In the particular case that $\alpha=\eta<\frac{n}{2}$, 
\begin{equation*}
\lvert u(x)-u_{Q_0}\rvert \leq 
c_n\,  \frac{\kappa}{\alpha}
\,\ell(Q_0)^{\alpha/r'}
\left(I_\alpha(g^r\chi_{Q_0})(x)\right)^{^{\frac1r}} \\
\end{equation*}

\end{lemma}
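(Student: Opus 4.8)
The plan is to derive the pointwise estimate from a standard chaining/telescoping argument along a sequence of dyadic cubes shrinking to a Lebesgue point $x$, combined with a careful dyadic decomposition to recognize the Riesz potential on the right-hand side. First I would fix a Lebesgue point $x\in Q_0$ of $u$ and, for each $j\geq 0$, let $Q_j$ be the dyadic subcube of $Q_0$ relative to $x$ with $\ell(Q_j)=2^{-j}\ell(Q_0)$, so that $x\in Q_j\subset Q_{j-1}$ and $\lvert Q_{j-1}\rvert = 2^n\lvert Q_j\rvert$. Since $x$ is a Lebesgue point, $u_{Q_j}\to u(x)$, and the telescoping identity gives
\[
\lvert u(x)-u_{Q_0}\rvert \leq \sum_{j=0}^{\infty} \lvert u_{Q_{j+1}}-u_{Q_j}\rvert
\leq \sum_{j=0}^{\infty} 2^n \vint_{Q_j}\lvert u(y)-u_{Q_j}\rvert\,dy.
\]
Applying hypothesis \eqref{kappaAssump0} to each $Q_j$ bounds this by $c_n\kappa\sum_j \ell(Q_j)^{\alpha}\big(\vint_{Q_j} g^r\big)^{1/r}$.

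The next step is to convert this sum into a Riesz potential. Writing $\ell(Q_j)^{\alpha}\big(\vint_{Q_j}g^r\big)^{1/r} = \ell(Q_j)^{\alpha}\lvert Q_j\rvert^{-1/r}\big(\int_{Q_j}g^r\big)^{1/r}$ and using $\lvert Q_j\rvert \simeq \ell(Q_j)^n$, the generic term is comparable to $\ell(Q_j)^{\alpha - n/r}\big(\int_{Q_j}g^r\chi_{Q_0}\big)^{1/r}$. The standard way to pass to $I_\alpha$ is to interpolate: split off a small power to create geometric decay and use the embedding of $\ell^r$ into $\ell^1$ appropriately. Concretely, I would introduce the parameter $\eta$ exactly here. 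Using $r\geq 1$ and the elementary inequality $\sum_j b_j \le \big(\sum_j b_j^r\big)^{1/r}$ after factoring a summable geometric weight, one writes
\[
\sum_{j}\ell(Q_j)^{\alpha-n/r}\Big(\int_{Q_j}g^r\chi_{Q_0}\Big)^{1/r}
=\sum_j \ell(Q_j)^{(\alpha-\eta)/r'}\cdot \ell(Q_j)^{(\alpha-\eta)/r + \eta/r - n/r + (\eta-\alpha)/r}\cdots
\]
— more cleanly, split $\ell(Q_j)^{\alpha-n/r} = \ell(Q_j)^{\alpha/r'}\cdot\ell(Q_j)^{(\alpha-\eta)/r}\cdot\ell(Q_j)^{(\eta-n)/r}$, bound the constant factor $\ell(Q_j)^{\alpha/r'}\le \ell(Q_0)^{\alpha/r'}$, apply Hölder in $j$ with exponents $r', r$ so the geometric series $\sum_j \ell(Q_j)^{(\alpha-\eta)r'/r \cdot ?}$... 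The clean route is: use $r'$-$r$ Hölder to get
\[
\sum_j \ell(Q_j)^{\alpha-n/r}\Big(\int_{Q_j}g^r\Big)^{1/r}\leq \Big(\sum_j \ell(Q_j)^{\beta r'}\Big)^{1/r'}\Big(\sum_j \ell(Q_j)^{\gamma r}\int_{Q_j}g^r\Big)^{1/r},
\]
with $\beta+\gamma = \alpha - n/r$ and $\beta r'>0$ chosen so the first sum is a convergent geometric series with sum $\le c\,\ell(Q_0)^{\beta r'}/(\beta r')$ — this is where $\alpha$ enters the denominator as $\alpha^{1/r'}$ if one takes $\beta$ proportional to $\alpha$. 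For the second sum, one needs $\ell(Q_j)^{\gamma r}\int_{Q_j}g^r$ to telescope into $\int_{Q_0} g(y)^r\lvert x-y\rvert^{\gamma r - ?}\,dy$; bounding $\int_{Q_j}g^r$ by adding dyadic annuli $\int_{Q_{j}\setminus Q_{j+1}} g^r$ and noting $\lvert x-y\rvert\simeq \ell(Q_j)$ there, with $\gamma r = \alpha r - n$ one recovers exactly $\sum_j \int_{Q_j\setminus Q_{j+1}} g(y)^r \lvert x-y\rvert^{\alpha r - n}\,dy \le I_{\alpha r}(g^r\chi_{Q_0})(x)$ — wait, that is $I_{\alpha}$ of $g^r$ only if one is careful: $I_{\alpha r}$ would require $\alpha r<n$, whereas the statement claims $I_\alpha(g^r\chi_{Q_0})$. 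So the correct splitting must put the full negative power $n-\alpha$ on $\lvert x-y\rvert$: take $\gamma r - (\text{annulus radius power})$ so that the kernel is $\lvert x-y\rvert^{\alpha-n}$, which forces $\gamma r = \alpha - n + (\text{something})$; tracking this, the remaining freedom is absorbed by $\eta$ with $0<\eta<n-\alpha$ guaranteeing convergence of the geometric series and producing the $\eta^{1/r}$ in the denominator. I would carry out this bookkeeping explicitly, choosing $\beta r' = \alpha$ (giving the $\alpha^{1/r'}$) and the annular geometric series exponent $= \eta$ (giving the $\eta^{1/r}$), yielding
\[
\lvert u(x)-u_{Q_0}\rvert \le c_n\,\frac{\kappa}{\alpha^{1/r'}\eta^{1/r}}\,\ell(Q_0)^{\alpha/r'}\big(I_\alpha(g^r\chi_{Q_0})(x)\big)^{1/r}.
\]

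The main obstacle, and the step requiring the most care, is precisely this exponent bookkeeping in the dyadic-annuli decomposition: one must verify that with the choices $\beta \propto \alpha$ and the annular decay exponent $\propto \eta$ the two geometric series converge (needing $\alpha>0$ and $\eta<n-\alpha$ respectively), that their sums produce the claimed constants $\alpha^{-1/r'}$ and $\eta^{-1/r}$ rather than worse ones, and that the resulting integral is genuinely $I_\alpha(g^r\chi_{Q_0})(x)=\int_{Q_0} g(y)^r\lvert x-y\rvert^{\alpha-n}\,dy$ and not some other Riesz potential. The special case $\alpha=\eta<n/2$ then follows immediately: the hypothesis $0<\eta<n-\alpha$ is compatible with $\eta=\alpha$ exactly when $\alpha<n/2$, and substituting $\eta=\alpha$ collapses $\alpha^{-1/r'}\eta^{-1/r}$ into $\alpha^{-1/r'-1/r}=\alpha^{-1}$, giving the stated cleaner bound with constant $c_n\kappa/\alpha$. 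I would also remark that the argument only uses \eqref{kappaAssump0} for the specific dyadic chain, so no John-domain machinery beyond the cube case is actually needed here.
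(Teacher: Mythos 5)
Your proposal follows the paper's proof essentially verbatim: telescope along a dyadic chain $\{Q_k\}$ shrinking to a Lebesgue point, apply \eqref{kappaAssump0} on each $Q_k$, use H\"older in the chain index with exponents $r',r$ (splitting $\ell(Q_k)^{\alpha}=\ell(Q_k)^{\alpha/r'}\ell(Q_k)^{\alpha/r}$ so that the first geometric sum yields $\ell(Q_0)^{\alpha/r'}\alpha^{-1/r'}$), then split $\ell(Q_k)^{\alpha-n}=\ell(Q_k)^{-\eta}\ell(Q_k)^{\alpha+\eta-n}$ and use $|x-y|\le\sqrt n\,\ell(Q_k)$ for $y\in Q_k$ to produce $|x-y|^{\alpha-n}$ plus a convergent $\eta$-geometric series, which together give $\eta^{-1/r}\left(I_\alpha(g^r\chi_{Q_0})(x)\right)^{1/r}$. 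The one cosmetic slip is the claim $|x-y|\simeq\ell(Q_j)$ on the annuli $Q_j\setminus Q_{j+1}$ --- only the upper bound $|x-y|\le\sqrt n\,\ell(Q_j)$ holds in general (as $x$ may lie on the boundary of $Q_{j+1}$), and this is precisely the gap your $\eta$-trick, like the paper's, is designed to fill, so the final accounting is correct.
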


\begin{proof} 
 
This result is well known but  we need to be precise with the main parameters involved. We adapt the main ideas from \cite{FLW} in the case $r=1$ and \cite{FH} when $r>1$ and we refer also to  \cite{LP}.   
Let $E\subset Q_0$ be the complement of the set of Lebesgue points of $u$ in $Q_0$. Then $\lvert E\rvert=0$.
For a fixed $x\in Q_0\setminus E$, there exists a chain $\{Q_k\}_{k\in\N}$ of nested  dyadic subcubes of $Q_0$ such that  $Q_1=Q_0$, $Q_{k+1}\subset Q_k$ with $|Q_k|=2^n |Q_{k+1}|$ for all   $k\in\N$ and
\begin{equation*}
\{x\}=\bigcap_{k\in\N} Q_k.
\end{equation*}
Then, 
\begin{equation*}
\lvert u(x)-u_{Q_0}\rvert=\left \lvert\lim_{k\to\infty} u_{Q_k} - u_{Q_1}\right\rvert\le \sum_{k\in\N} \left\lvert u_{Q_{k+1}}- u_{Q_k}\right\rvert.
\end{equation*}

 Now, using the dyadic structure of the chain and the assumption \eqref{kappaAssump0}, we obtain that 
\begin{align*}
\sum_{k\in\N} \left\lvert u_{Q_{k+1}}- u_{Q_k}\right\rvert & \le  \sum_{k\in\N} \frac{1}{|Q_{k+1}|}\int_{Q_{k+1}}\lvert u(y)-u_{Q_k}\rvert\,dy\\
& \le  2^n \sum_{k\in\N} \frac{1}{|Q_{k}|}\int_{Q_{k}} \lvert u(y)-u_{Q_k}\rvert\,dy\\
& \le  2^n\kappa \sum_{k\in\N} \ell(Q_k)^\alpha\left(\frac{1}{|Q_{k}|}\int_{Q_{k}}g(y)^r\, dy \right)^{1/r}\\ 
 &\le  2^n\kappa \left(\sum_{k\in\N} \ell(Q_k)^\alpha\right)^{1/r'} \left(\sum_{k\in\mathbb{N}} \frac{\ell(Q_k)^\alpha}{|Q_{k}|}\int_{Q_{k}}g(y)^r\, dy \right)^{1/r}\\ 
& \leq \frac{2^n}{(1-2^{-\alpha})^{1/r'}}\kappa \ell(Q_0)^{\alpha/r'}\left(\int_{Q_0}g(y)^r \sum_{k\in\N} \ell(Q_k)^{\alpha-n}\chi_{Q_k}(y)\,dy\right)^{1/r}.
\end{align*}
Note that the immediate estimate $|x-y|\le \sqrt{n}\ell(Q_k)$ produces an extra unwanted $\log$ factor when summing the series. We instead proceed as follows. 
Fix $y\in Q_0\setminus \{x\}$ and pick $0<\eta< n-\alpha$. Write $k_0(y)=\max\{j\in \mathbb{N}: 2^{j-1}\le \sqrt{n}\frac{\ell(Q_0)}{|x-y|}\}$. Then 
\begin{align*}
\sum_{k\in\N} \ell(Q_k)^{\alpha-n}\chi_{Q_k}(y) &\le  \frac{c_n}{|x-y|^{n-\alpha-\eta}}\sum_{k=1}^{k_0(y)} \ell(Q_k)^{-\eta}\chi_{Q_k}(y)\\
&\le \frac{c_n}{|x-y|^{n-\alpha-\eta}\ell(Q_0)^\eta}\sum_{k=1}^{k_0(y)} 2^{(k-1)\eta}\\
& \le 
\frac{c_n\,2^{\eta k_0(y)}}{|x-y|^{n-\alpha-\eta}\ell(Q_0)^\eta(1-2^{-\eta})}
\\&\le  \frac{c_n}{|x-y|^{n-\alpha}(1-2^{-\eta})}. %
\end{align*}
We conclude that  the desired inequality
\begin{equation*}
\begin{aligned}
|u(x)-u_{Q_0}|&\le  \frac{c_n\kappa}{(1-2^{-\alpha})^{1/r'}(1-2^{-\eta})^{1/r}}  \ell(Q_0)^{\alpha/r'} \left(\int_{Q_0} \frac{g(y)^r}{|x-y|^{n-\alpha}}\,dy\right)^{\frac1r}\\
&\leq
\frac{c_n\kappa}{\alpha^{1/r'}\eta^{1/r}}  \ell(Q_0)^{\alpha/r'} (I_\alpha(g^r\chi_{Q_0})(x))^{1/r}
\end{aligned}
\end{equation*}
holds for almost every $x\in Q_0$ \,since $\frac{1}{1-2^{-t}} <  \frac{c_n}{t}$ \quad $0<t<n$.  
 \end{proof}

As a main application of the representation formula we can derive the following  lemma.

\begin{lemma}\label{weak_local_isoperimetric}
Let $Q_0$ be a cube in $\R^n$.
Assume that $0<\alpha<n$. Let $u\in L^1(Q_0)$  and let $g$ be a non-negative measurable function on $Q_0$
such that the following representation formula holds
\[
\lvert u(x)-u_{Q_0}\rvert\le \rho\,I_\alpha(g\chi_{Q_0})(x),\qquad \text{a.e. }x\in Q_0.
\]
There exists a constant $c_n>0$ such that,  for any weight $w$
\[
\|(u-u_{Q_0})\chi_{Q_0}\|_{L^{\frac{n}{n-\alpha},\infty}(w)} \leq  \frac{c_n\,\rho}{\alpha}  \int_{Q_0} g(x)\, (Mw(x))^\frac{n-\alpha}{n}dx.
\] 
\end{lemma}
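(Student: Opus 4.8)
The plan is to reduce the weighted weak-type estimate for the truncated function $(u-u_{Q_0})\chi_{Q_0}$ to a weak-type bound for the Riesz potential $I_\alpha(g\chi_{Q_0})$, and then to prove the latter directly with the weight $Mw$ appearing on the right via a Fefferman--Stein type argument. First I would use the pointwise representation formula hypothesis to write, for any $t>0$,
\[
w(\{x\in Q_0: |u(x)-u_{Q_0}|>t\}) \le w(\{x\in Q_0: \rho\, I_\alpha(g\chi_{Q_0})(x)>t\}),
\]
so it suffices to show
\[
\sup_{t>0} t\, \big(w(\{x: I_\alpha(g\chi_{Q_0})(x)>t\})\big)^{\frac{n-\alpha}{n}} \le \frac{c_n}{\alpha}\int_{Q_0} g(x)\,(Mw(x))^{\frac{n-\alpha}{n}}\,dx.
\]

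The key step is the weak-type $(1,\tfrac{n}{n-\alpha})$ inequality for $I_\alpha$ with respect to the weight $w$ on the left and $Mw$ on the right. The standard route is a dyadic/level-set decomposition of the Riesz potential: splitting the integral defining $I_\alpha(g\chi_{Q_0})(x)$ into annuli $2^{-k-1}\ell \le |x-y| < 2^{-k}\ell$ (or the analogous dyadic cube decomposition), one bounds $I_\alpha(g\chi_{Q_0})(x)$ pointwise by a sum controlled by the fractional maximal function $M_\alpha(g\chi_{Q_0})(x)$ together with the convergent geometric series in the exponent $\alpha$, which is where the factor $\tfrac{1}{\alpha}$ (more precisely $\tfrac{1}{1-2^{-\alpha}}\le \tfrac{c_n}{\alpha}$) enters. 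It then remains to prove the weighted weak-type bound
\[
\sup_{t>0} t\,\big(w(\{x: M_\alpha(g\chi_{Q_0})(x)>t\})\big)^{\frac{n-\alpha}{n}} \le c_n \int_{Q_0} g(x)\,(Mw(x))^{\frac{n-\alpha}{n}}\,dx,
\]
which is proved by the usual Calder\'on--Zygmund / Vitali covering argument: cover the level set by cubes $Q$ on which $\ell(Q)^\alpha\vint_Q g > t$, extract a disjoint subfamily via the $5r$-covering lemma as in the proof of Lemma \ref{PropertiesFractMax.}, and estimate
\[
w(5Q_j) \le 5^n \ell(Q_j)^n \Big(\vint_{5Q_j} w\Big) \le c_n \Big(\inf_{5Q_j} Mw\Big)\,|Q_j|,
\]
so that, using $\ell(Q_j)^\alpha \vint_{Q_j} g > t$ and summing after raising to the power $\tfrac{n-\alpha}{n}\le 1$ (subadditivity of $x\mapsto x^{(n-\alpha)/n}$), one collects $t^{(n-\alpha)/n}\,w(\text{level set})^{(n-\alpha)/n}\lesssim \int g\,(Mw)^{(n-\alpha)/n}$.

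I expect the main obstacle to be the bookkeeping in the interchange of the maximal-function bound and the weight: one must pass from the pointwise inequality $I_\alpha(g\chi_{Q_0}) \lesssim \alpha^{-1} M_\alpha(g\chi_{Q_0})$ together with a factor involving $\|g\chi_{Q_0}\|_{L^1}$ — the full Hedberg-type pointwise estimate is $I_\alpha h(x)\le c\,\|h\|_{L^1}^{\alpha/n} (M h(x))^{1-\alpha/n}$ — and then absorb the $L^1$ norm correctly so that the homogeneity matches the target exponent $\tfrac{n}{n-\alpha}$; alternatively one avoids Hedberg entirely and argues directly on the level sets of $I_\alpha$ by splitting $g\chi_{Q_0} = g\chi_{Q_0}\chi_{B(x,r)} + g\chi_{Q_0}\chi_{B(x,r)^c}$ and optimizing in $r$, in which case the delicate point is the weighted estimate on the ``near'' part. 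Either way the weight $Mw$ is forced by exactly the covering estimate $w(5Q_j)\le c_n |Q_j|\inf_{5Q_j} Mw$, and the constant $c_n/\alpha$ comes solely from the geometric series in the annular decomposition; the powers $\tfrac{n-\alpha}{n}\le 1$ are used repeatedly to make the sum over disjoint cubes subadditive. Everything else is routine, and no interpolation or duality is needed since only the weak-type estimate is claimed.
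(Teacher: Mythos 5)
There is a genuine gap in your argument: the weighted weak-type bound for $I_\alpha$ that you reduce to, namely
\begin{equation*}
\sup_{t>0} t\, w\bigl(\{x: I_\alpha(g\chi_{Q_0})(x)>t\}\bigr)^{\frac{n-\alpha}{n}} \leq \frac{c_n}{\alpha}\int_{Q_0} g(x)\,(Mw(x))^{\frac{n-\alpha}{n}}\,dx,
\end{equation*}
is never actually established, and neither of the routes you sketch closes it. The pointwise bound $I_\alpha(g\chi_{Q_0})(x)\leq c_n\alpha^{-1}M_\alpha(g\chi_{Q_0})(x)$ is false: in the annular decomposition each annulus contributes a term comparable to $M_\alpha(g\chi_{Q_0})(x)$ with no geometric decay in the scale index, so the series diverges. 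The convergent geometric series with sum $(1-2^{-\alpha})^{-1}\sim c_n/\alpha$ that you have in mind comes only from the near part of Hedberg's splitting, where the annuli are bounded by the ordinary maximal function $M(g\chi_{Q_0})$, and then the far part introduces the extra factor $\|g\chi_{Q_0}\|_{L^1}^{\alpha/n}$. Pushing that through the Fefferman--Stein weak-type inequality for $M$ yields
\begin{equation*}
t\, w\bigl(\{I_\alpha(g\chi_{Q_0})>t\}\bigr)^{\frac{n-\alpha}{n}} \leq \frac{c_n}{\alpha}\,\Bigl(\int_{Q_0}g\,dx\Bigr)^{\alpha/n}\Bigl(\int_{Q_0} g\, Mw\,dx\Bigr)^{\frac{n-\alpha}{n}},
\end{equation*}
but by H\"older with exponents $n/(n-\alpha)$ and $n/\alpha$ one has $\int_{Q_0} g\,(Mw)^{\frac{n-\alpha}{n}}\,dx \leq (\int_{Q_0}g\,dx)^{\alpha/n}(\int_{Q_0} g\,Mw\,dx)^{\frac{n-\alpha}{n}}$, so the Hedberg-derived right-hand side is in general strictly larger than the target and does not imply it. Your covering argument does correctly prove the analogous weak-type estimate for $M_\alpha(g\chi_{Q_0})$, but passing from $M_\alpha$ to $I_\alpha$ is precisely the nontrivial two-weight step, and the direct level-set alternative you mention is left undeveloped at exactly the point you yourself flag as delicate.

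The paper takes a genuinely different, shorter route that bypasses the difficulty entirely. Since $n/(n-\alpha)>1$, the quasi-norm $\|\cdot\|_{L^{n/(n-\alpha),\infty}(w)}$ admits an equivalent norm $\vertiii{\cdot}$ with $\vertiii{f}\leq\|f\|\leq (n/(n-\alpha))'\,\vertiii{f}=(n/\alpha)\,\vertiii{f}$. Applying Minkowski's integral inequality in this norm pulls the integration in $y$ outside and reduces the problem to computing, for fixed $y$, the weak norm $\bigl\|\,|\cdot-y|^{\alpha-n}\,\bigr\|_{L^{n/(n-\alpha),\infty}(w)}$; a direct evaluation of the distribution function identifies this with $c_n\sup_{r>0}\bigl(w(Q(y,r))/|Q(y,r)|\bigr)^{\frac{n-\alpha}{n}}\leq c_n(Mw(y))^{\frac{n-\alpha}{n}}$. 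The factor $c_n/\alpha$ comes entirely from the renorming constant, not from a geometric series or a Vitali covering, and no comparison between $I_\alpha$ and any maximal function is needed. If you wish to keep a covering-based proof you would have to prove the two-weight weak-type estimate for $I_\alpha$ itself, which is genuinely harder than the $M_\alpha$ version you verified.
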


\begin{proof}
 Recall that  $\|\cdot\|_{L^{p,\infty}(w)}$ is not a norm when $p>1$ but it is well know that there is a norm $|||\cdot|||_{L^{p,\infty}(w)}$ such that
\[
|||\cdot|||_{L^{p,\infty}(w)} \leq \|\cdot\|_{L^{p,\infty}(w)} \leq p'|||\cdot|||_{L^{p,\infty}(w)}. 
\]
Hence, combining this together with Minkowski's integral inequality (this is a known result but we refer to \cite{BKS} for sharp bounds), we have 
\[
\|(u-u_{Q_0})\chi_{Q_0}\|_{L^{\frac{n}{n-\alpha},\infty}(w)}\leq  \rho  \|I_\alpha(g\chi_{Q_0})\chi_{Q_0}\|_{L^{\frac{n}{n-\alpha},\infty}(w)}.
\]
 \[
 \leq   
 {\frac{n\rho}{\alpha}}  \int_{Q_0}\left\| \frac{1}{|\cdot-y|^{n-\alpha}}\right\|_{L^{\frac{n}{n-\alpha},\infty}(w)} g(y)dy.
 \]
But
\begin{align*}
\left\| \frac{1}{|\cdot-y|^{n-\alpha}}\right\|_{L^{\frac{n}{n-\alpha},\infty}(w)}  & \leq   {c_n}  \sup_{r>0} \left ( |Q(y,r)|^{-1} w\left ( Q(y,r) \right )\right )^\frac{n-\alpha}{n}\\
& =  {c_n}  (M^cw (y))^\frac{n-\alpha}{n},
\end{align*}
where $Q(y,r)$ is the cube with midpoint $y$ and side-length $2r$.
By collecting all estimates,
\begin{equation*}
\|(u-u_{Q_0})\chi_{Q_0}\|_{L^{\frac{n}{n-\alpha},\infty}(w)} \leq  \frac{c_n\rho}{\alpha} \int_{Q_0} g(y) (Mw(y))^\frac{n-\alpha}{n}\, dy.
\end{equation*} 
\end{proof}

A combination of the two lemmata gives the proof of Theorem \ref{strong_local_isoperimetric}.

\begin{proof}[Proof of Theorem \ref{strong_local_isoperimetric}]

Making use of   \eqref{FPI with gain} in the case $p=1$, for any cube $Q$ 
\begin{align*}
\vint_Q \lvert u(x)-u_Q\rvert\,dx
&\leq c_n\, %
 (1-\delta)
\ell(Q)^\delta
\vint_Q\int_Q\frac{|u(x)-u(y)|}{|x-y|^{n+\delta}}\, dy\, dx.
\end{align*}
Then apply Lemma \ref{l.riesz_poinc} with $\alpha=\delta$, $r=1$, $\kappa=c_n\,(1-\delta)$ and
\[
g(x)=\left(\int_{Q_0} \frac{\lvert u(x)-u(y)\rvert}{\lvert x-y\rvert^{n+\delta }}\,dy\right)\chi_{Q_0}(x).
\]
Then there exists a constant $c=c(n)$ 
such that
\begin{equation*}
\lvert u(x)-u_{Q_0}\rvert \leq  c_n\,(1-\delta)\, I_\delta(g\chi_{Q_0})(x)
\end{equation*}
for almost every $x\in Q_0$. By Lemma \ref{weak_local_isoperimetric} we have 
\begin{equation*}\label{e.weak_fp}
\|(u-u_{Q_0})\chi_{Q_0}\|_{L^{\frac{n}{n-\delta},\infty}(w)} \leq   c_n\,(1-\delta)\,     \int_{Q_0}
\int_{Q_0}\frac{\lvert u(x)-u(y)\rvert}{\lvert x-y\rvert^{n+\delta}}\,dy \, (Mw(x))^\frac{n-\delta}{n}\,dx
\end{equation*}
since $\delta\ge \frac12$. 
 To finish we use the fractional truncation method, see Theorem \ref{t.truncation}. 
\end{proof}

Similarly we  prove next a global type version of Corollary \ref{globalversion2}.

\begin{proof}[Proof of Corollary  \ref{globalversion2}]

We proceed as in the proof of Corollary  \ref{globalversion1}. 
By assumption, there exists an increasing sequence of cubes $\{Q_j\}_{j\in \mathbb{N}}$ such that $\mathbb{R}^n=\bigcup_{j\in\mathbb{N}} Q_j$ such that 
\[
\lim_{j\to\infty} \frac{1}{w(Q_j)}\int_{Q_j}u(x)w(x)dx=0.
\]
Write $c_j=\frac{1}{w(Q_j)}\int_{Q_j}u(x)w(x)dx$ for
all $j\in\N$.  Using \eqref{LqOscProp}, Fatou's lemma combined with Theorem \ref{strong_local_isoperimetric}, yields 
\[
\begin{split}
\left ( \int_{\mathbb{R}^n} |u(x)|^{\frac{n}{n-\delta}}w(x)\,dx\right )^\frac{n-\delta}{n} &=\left ( \int_{\mathbb{R}^n} \liminf_{j\to\infty}\chi_{Q_j}(x)|u(x)-c_{j}|^{\frac{n}{n-\delta}}w(x)\,dx \right )^\frac{n-\delta}{n}\\
 &\le   \liminf_{j\to\infty} \left(\int_{\mathbb{R}^n}\chi_{Q_j}(x)|u(x)-c_{j}|^{\frac{n}{n-\delta}} w(x)\,dx\right )^\frac{n-\delta}{n}\\
 &=  \liminf_{j\to\infty} \left( \int_{Q_j}|u(x)-c_{j}|^{\frac{n}{n-\delta}} w(x)\,dx\right )^\frac{n-\delta}{n}\\
 &\leq c_n\,  (1-\delta) \,\liminf_{j\to\infty}  \int_{Q_j} \int_{Q_j} \frac{\lvert u(x)-u(y)\rvert}{\lvert x-y\rvert^{n+\delta }}\,dy \, (Mw(x))^\frac{n-\delta}{n}\,dx \\
 &\leq c_n\, 
(1-\delta)\,   \int_{\R^n}  \int_{\R^n} \frac{\lvert u(x)-u(y)\rvert}{\lvert x-y\rvert^{n+\delta }}\,dy \, (Mw(x))^\frac{n-\delta}{n}\,dx,
\end{split}
\]
which is the claimed inequality. 
\end{proof}

As a corollary, we obtain the following lower bound for a weighted fractional perimeter.

\begin{proof}[Proof of Corollary  \ref{fractionalCharacteristics}]
This follows by applying Corollary \ref{globalversion2} to the characteristic
function of $E$  since $L_c^{\infty}(\R^n) \subset \F_{w}$ for any weight $w$ such that $w\notin L^1(\R^n)$.
\end{proof}

\begin{proof}[Proof of Corollary  \ref{BBMwithweights}]      

Fix a cube $Q$ and  let $E$ be a measurable subset of $Q$. Then we can apply Theorem \ref{strong_local_isoperimetric} using \eqref{LqOscProp} with $u=\chi_{\strt{1.5ex}E}$ and $\frac12\leq \delta<1$, namely
\[
 \begin{split}
& \left(\frac{1}{w(Q)}\int_Q |u(x)-u_{Q,w}|^{\frac{n}{n-\delta}}w(x)\, dx\right)^{\frac{n-\delta}{n}}\\
&  \leq 
\frac{c_n(1-\delta)}{w(Q)^{\frac{n-\delta}{n}}} 
\Bigg(\int_E \int_{E}\frac{\lvert u(x)-u(y)\rvert Mw(x)^\frac{n-\delta}{n}}{\lvert x-y\rvert^{n+\delta}} dydx \\&\qquad\quad \qquad\qquad +\int_E \int_{Q\setminus E}\cdots\, dydx+\int_{Q\setminus E} \int_E\cdots\, dydx+ 
\int_{Q\setminus E} \int_{Q\setminus E} \cdots\, dydx \Bigg)\\
&  = \frac{c_n(1-\delta)}{w(Q)^{\frac{n-\delta}{n}}}\left( 
\int_E \int_{Q\setminus E} \frac{Mw(x)^\frac{n-\delta}{n}}{\lvert x-y\rvert^{n+\delta}}\,dydx +\int_{Q\setminus E} \int_{E} \frac{Mw(x)^\frac{n-\delta}{n}}{\lvert x-y\rvert^{n+\delta}}\,dydx \right).
\\
\end{split}
\]
On the other hand, 
\[
\begin{split}
&\frac{1}{w(Q)}\int_Q |u(x)-u_{Q,w}|^{\frac{n}{n-\delta}}w(x)\, dx
 \\& \qquad \geq \frac{1}{w(Q)} \int_{Q\setminus E} \left|\chi_{\strt{1.5ex}E}(x)- \frac{w(E)}{w(Q)} \right|^{\frac{n}{n-\delta}}w(x)\, dx
= \left(\frac{w(E)}{w(Q)}\right)^{\frac{n}{n-\delta}} \frac{w(Q\setminus E)}{w(Q)},
\end{split}
\]
and hence
\[
\begin{split}
&\left(\frac{w(E)}{w(Q)}\right)^{\frac{n}{n-\delta}} \frac{w(Q\setminus E)}{w(Q)} \\
& \qquad \leq 
\frac1{w(Q)} 
\left(c_n(1-\delta)
\int_E \int_{Q\setminus E} \frac{Mw(x)^\frac{n-\delta}{n}}{\lvert x-y\rvert^{n+\delta}}\,dydx 
+
c_n(1-\delta) \int_{Q\setminus E} \int_{E} \frac{Mw(x)^\frac{n-\delta}{n}}{\lvert x-y\rvert^{n+\delta}}\,dydx
\right) ^{\frac{n}{n-\delta}}.
\end{split}
\]
Repeating the same argument,  but replacing $E$ by $Q\setminus E$, we also have  
\[
\begin{split}
&\left(\frac{w(Q\setminus E)}{w(Q)}\right)^{\frac{n}{n-\delta}} \frac{w( E)}{w(Q)} \\
&\qquad \leq 
\frac1{w(Q)} 
\left(c_n(1-\delta)
\int_E \int_{Q\setminus E} \frac{Mw(x)^\frac{n-\delta}{n}}{\lvert x-y\rvert^{n+\delta}}\,dydx +
c_n(1-\delta) \int_{Q\setminus E} \int_{E} \frac{Mw(x)^\frac{n-\delta}{n}}{\lvert x-y\rvert^{n+\delta}}\,dydx
\right) ^{\frac{n}{n-\delta}}.
\end{split}
\]
Then, taking the maximum 
\[
\begin{split}
M &=  \max\left\{\left(\frac{w(Q\setminus E)}{w(Q)}\right)^{\frac{n}{n-\delta}} \frac{w( E)}{w(Q)}, 
\left(\frac{w(E)}{w(Q)}\right)^{\frac{n}{n-\delta}} \frac{w( Q\setminus E)}{w(Q)} \right\} \\
& \qquad \leq \frac1{w(Q)} 
\left(c_n(1-\delta)
\int_E \int_{Q\setminus E} \frac{Mw(x)^\frac{n-\delta}{n}}{\lvert x-y\rvert^{n+\delta}}\,dydx +
c_n(1-\delta) \int_{Q\setminus E} \int_{E} \frac{Mw(x)^\frac{n-\delta}{n}}{\lvert x-y\rvert^{n+\delta}}\,dydx
\right) ^{\frac{n}{n-\delta}}.
\end{split}
\]%
The maximum simplifies as follows 
\[
\begin{split}
M&= 
\max\left\{\frac{w(Q\setminus E)}{w(Q)}\left(\frac{w(Q\setminus E)}{w(Q)}\right)^{\frac{\delta}{n-\delta}} \frac{w( E)}{w(Q)}, 
\frac{w(E)}{w(Q)} \left(\frac{w(E)}{w(Q)}\right)^{\frac{\delta}{n-\delta}} \frac{w( Q\setminus E)}{w(Q)} \right\}\\
&=
\frac{w(Q\setminus E)}{w(Q)}\frac{w( E)}{w(Q)}\, \max\left\{ 
 \left(\frac{w(Q\setminus E)}{w(Q)}\right)^{\frac{\delta}{n-\delta}}, 
\left(\frac{w(E)}{w(Q)}\right)^{\frac{\delta}{n-\delta}}  \right\}
\\
&=
\frac{w(Q\setminus E)}{w(Q)}\frac{w( E)}{w(Q)}\, 
\max\left\{  \frac{w(Q\setminus E)}{w(Q)}, 
\frac{w(E)}{w(Q)}  \right\}^{\frac{\delta}{n-\delta}}
\end{split}
\]
and hence 
\[
\begin{split}
&\frac{w(Q\setminus E)}{w(Q)}\frac{w( E)}{w(Q)}\, 
\max\left\{  \frac{w(Q\setminus E)}{w(Q)}, 
\frac{w(E)}{w(Q)}  \right\}^{\frac{\delta}{n-\delta}}
\\
&\qquad \leq  \frac1{w(Q)} 
\left(c_n(1-\delta)
\int_E \int_{Q\setminus E} \frac{Mw(x)^\frac{n-\delta}{n}}{\lvert x-y\rvert^{n+\delta}}\,dydx +
c_n(1-\delta) \int_{Q\setminus E} \int_{E} \frac{Mw(x)^\frac{n-\delta}{n}}{\lvert x-y\rvert^{n+\delta}}\,dydx
\right) ^{\frac{n}{n-\delta}}.
\end{split}
\]
Now, since $\max\{1-\alpha,\alpha\} \geq \frac12$ with $ \alpha=\frac{w(E)}{w(Q)} \in [0,1]$,  we get
\[
\begin{split}
&\frac{w( Q\setminus E)}{w(Q)}\,\frac{w( E)}{w(Q)}\,  \frac{1}{2^{\frac{\delta}{n-\delta}}} \\
&\qquad  \leq 
 \frac1{w(Q)} 
\left(c_n(1-\delta)
\int_E \int_{Q\setminus E} \frac{Mw(x)^\frac{n-\delta}{n}}{\lvert x-y\rvert^{n+\delta}}\,dydx +
c_n(1-\delta) \int_{Q\setminus E} \int_{E} \frac{Mw(x)^\frac{n-\delta}{n}}{\lvert x-y\rvert^{n+\delta}}\,dydx
\right) ^{\frac{n}{n-\delta}}.
\end{split}
\]
and then
\[
\begin{split}
&  w( Q\setminus E)w( E) \\
&\qquad  \leq 
w(Q) 
\left(2^{\tfrac \delta n}c_n(1-\delta)
\int_E \int_{Q\setminus E} \frac{Mw(x)^\frac{n-\delta}{n}}{\lvert x-y\rvert^{n+\delta}}\,dydx +
2^{\tfrac \delta n}c_n(1-\delta) \int_{Q\setminus E} \int_{E} \frac{Mw(x)^\frac{n-\delta}{n}}{\lvert x-y\rvert^{n+\delta}}\,dydx
\right) ^{\frac{n}{n-\delta}}.
\end{split}
\]
This finishes the proof of Corollary  \ref{BBMwithweights} by  translating  $\delta$ to $\varepsilon$. 
\end{proof}

\bibliographystyle{amsalpha}

\providecommand{\bysame}{\leavevmode\hbox to3em{\hrulefill}\thinspace}
\providecommand{\MR}{\relax\ifhmode\unskip\space\fi MR }
\providecommand{\MRhref}[2]{%
  \href{http://www.ams.org/mathscinet-getitem?mr=#1}{#2}
}
\providecommand{\href}[2]{#2}

\end{document}